\documentclass{amsart}
\usepackage{amsmath}
\usepackage{mathrsfs}
\usepackage{amsfonts}
\usepackage{amssymb}
\usepackage{graphicx}
\usepackage{amsthm}
\usepackage[toc,page]{appendix}
\usepackage{geometry}
\usepackage[all]{xy}
\usepackage{lmodern}
\usepackage[T1]{fontenc}

\usepackage[
    backend=bibtex,
    style=alphabetic,
    sorting=nyt,
    maxalphanames=99,
    maxbibnames=99,
    maxcitenames=99
]{biblatex}

\usepackage[dvipsnames]{xcolor}

\definecolor{grau}{gray}{0.9} %ehemals schwarz - Sgrau
\definecolor{gelb}{RGB}{225,225,0} %ehemals hellgrau - Sgruen
\definecolor{blau}{RGB}{0,0,255} %ehemals dunkelgrau - Sblau
\definecolor{rot}{RGB}{255,0,0} %ehemals weiss - Srot

\usepackage{graphics}
\usepackage{comment}
\usepackage{csquotes}
\usepackage{hyperref}
\usepackage{enumerate}
\usepackage{caption}
\usepackage{subcaption}
\usepackage{float}
\usepackage{multicol}
\usepackage{algpseudocodex}
\usepackage{algorithm}

\usepackage[normalem]{ulem}

\renewcommand{\arraystretch}{2}
\numberwithin{equation}{section}

\newtheorem{theorem}{Theorem}[section]	
\newtheorem*{theorem*}{theorem}
\newtheorem{proposition}[theorem]{Proposition} 
\newtheorem{lemma}[theorem]{Lemma}
	
\newtheorem{definition}[theorem]{Definition}
\newtheorem*{def*}{Definition}
\newtheorem{corollary}[theorem]{Corollary}
\newtheorem{remark}[theorem]{Remark}
\newtheorem{example}[theorem]{Example}

\usepackage{tikz}
\usetikzlibrary{decorations,arrows}
\usetikzlibrary{decorations.pathmorphing}
\usepgflibrary{decorations.pathreplacing} 
\usepackage{pgf}
\usetikzlibrary{patterns}

\tikzset{
	schraffiert/.style={pattern=horizontal lines,pattern color=#1},
	schraffiert/.default=black
}

\tikzset{
	ultra thin/.style= {line width=0.1pt},
	very thin/.style=  {line width=0.2pt},
	thin/.style=       {line width=0.4pt},% thin is the default
	semithick/.style=  {line width=0.6pt},
	thick/.style=      {line width=0.8pt},
	very thick/.style= {line width=1.2pt},
	ultra thick/.style={line width=2.4pt}
}

\newcommand{\C}{{\mathbb C}}

\newcommand{\N}{{\mathbb N}}

\newcommand{\R}{{\mathbb R}}

\newcommand{\Z}{{\mathbb Z}}

\newcommand{\Aa}{{\mathcal A}}
\newcommand{\Ee}{{\mathcal E}}

\newcommand{\Jj}{{\mathcal J}}

\newcommand{\Ll}{{\mathcal L}}
\newcommand{\Pp}{{\mathcal P}}
\newcommand{\Qq}{{\mathcal Q}}

\newcommand{\Vv}{{\mathcal V}}

\newcommand{\Orb}{{\mathrm Orb}}				%%orbit
\DeclareMathOperator{\supp}{{\mathrm supp}}		%%support
\newcommand{\Pat}{\mathrm{Pat}(\Aa^\Gamma)}				%%pattern
\newcommand{\PatZ}{\mathrm{Pat}(\Aa^{\Z^2})}				%%pattern
\definecolor{ColorR}{RGB}{220,50,47}   % r = red
\definecolor{ColorY}{RGB}{255,214,10}  % y = yellow
\definecolor{ColorB}{RGB}{38,139,210}  % b = blue
\definecolor{ColorG}{RGB}{160,160,160} % g = gray

\tikzset{
  Tiler/.style={draw=black, fill=ColorR, line width=0.25pt},
  Tiley/.style={draw=black, fill=ColorY, line width=0.25pt},
  Tileb/.style={draw=black, fill=ColorB, line width=0.25pt},
  Tileg/.style={draw=black, fill=ColorG, line width=0.25pt},
  Frame/.style={draw=black, line width=0.5pt}
}

\newcommand{\mathaxisbox}[1]{%
  \mathord{\vcenter{\hbox{#1}}}%
}
\newcommand{\letterbox}[1]{%
  \mathaxisbox{%
    \begin{tikzpicture}[scale=0.35]
      \filldraw[Tile#1] (0,0) rectangle (1,1);
      \draw[Frame] (0,0) rectangle (1,1);
    \end{tikzpicture}%
  }%
}

\newcommand{\block}[4]{%
  \mathaxisbox{%
    \begin{tikzpicture}[scale=0.35]
      \filldraw[Tile#1] (0,0) rectangle (1,1);
      \filldraw[Tile#2] (1,0) rectangle (2,1);
      \filldraw[Tile#3] (0,-1) rectangle (1,0);
      \filldraw[Tile#4] (1,-1) rectangle (2,0);
      \draw[Frame] (0,-1) rectangle (2,1);
    \end{tikzpicture}%
  }%
}
\newcommand{\vblock}[2]{%
  \raisebox{-0.5\height}{%
    \begin{tikzpicture}[scale=0.35]
      \filldraw[Tile#1] (0,0) rectangle (1,1);
      \filldraw[Tile#2] (0,-1) rectangle (1,0);
      \draw[Frame] (0,-1) rectangle (1,1);
    \end{tikzpicture}%
  }%
}
\newcommand{\hblock}[2]{%
  \raisebox{-0.2\height}{%
    \begin{tikzpicture}[scale=0.35]
      \filldraw[Tile#1] (0,0) rectangle (1,1);
      \filldraw[Tile#2] (1,0) rectangle (2,1);
      \draw[Frame] (0,0) rectangle (2,1);
    \end{tikzpicture}%
  }%
}
\newcommand{\blocktwothree}[6]{%
  \mathaxisbox{%
    \begin{tikzpicture}[scale=0.35]
      \filldraw[Tile#1] (0,2) rectangle (1,3);
      \filldraw[Tile#2] (1,2) rectangle (2,3);
      \filldraw[Tile#3] (2,2) rectangle (3,3);
      \filldraw[Tile#4] (0,1) rectangle (1,2);
      \filldraw[Tile#5] (1,1) rectangle (2,2);
      \filldraw[Tile#6] (2,1) rectangle (3,2);
      \draw[Frame] (0,1) rectangle (3,3);
    \end{tikzpicture}%
  }%
}
\newcommand{\blocknine}[9]{%
  \mathaxisbox{%
    \begin{tikzpicture}[scale=0.35]
      \filldraw[Tile#1] (0,2) rectangle (1,3);
      \filldraw[Tile#2] (1,2) rectangle (2,3);
      \filldraw[Tile#3] (2,2) rectangle (3,3);
      \filldraw[Tile#4] (0,1) rectangle (1,2);
      \filldraw[Tile#5] (1,1) rectangle (2,2);
      \filldraw[Tile#6] (2,1) rectangle (3,2);
      \filldraw[Tile#7] (0,0) rectangle (1,1);
      \filldraw[Tile#8] (1,0) rectangle (2,1);
      \filldraw[Tile#9] (2,0) rectangle (3,1);
      \draw[Frame] (0,0) rectangle (3,3);
    \end{tikzpicture}%
  }%
}
\usetikzlibrary{shapes}
\usetikzlibrary{positioning}

\title{Spectral pollution in substitution systems}
\author{Ram Band, Siegfried Beckus, Felix Pogorzelski and Lior Tenenbaum}

\address{Department of Mathematics\\
Technion - Israel Institute of Technology\\
Haifa
}
\email{ramband@technion.ac.il}

\address{Institute of Mathematics\\
University of Potsdam\\
Potsdam}
\email{beckus@uni-potsdam.de}

\address{Department of Mathematics and Computer Science\\
University of Leipzig, Leipzig}
\email{felix.pogorzelski@math.uni-leipzig.de}

\address{Department of Mathematics, Bar-Ilan University\\ 
Ramat Gan}
\email{lior.tenen.25@gmail.com}

\begin{document}
	\begin{abstract}
	We study spectral properties of Schrödinger operators associated with substitution dynamical systems in higher dimensions. Focusing on periodic approximations generated by iterating substitutions on initial configurations, we analyze how structural defects influence the limiting spectral behavior. In contrast to the one-dimensional setting, we show that such approximations may exhibit significant spectral pollution, including changes in the essential spectrum and the Lebesgue measure. 
	\end{abstract}
	
	\maketitle

\setcounter{tocdepth}{1} %Only sections are displayed in table of content not subsections
\tableofcontents
	
%%%%%%%%%%%%%%%%%%%%%%%%%%%%%%%%%%%%%%%%%%%%%%%%%%%%%%%%%%%%%%%%%%%%%%%%%%%%%%%%%%%%%%%%%%%%%%%%%%%%%%%%%%%%%%%%%%%%%%%%%%%%%%%%%%%%%%%%%%%%%%%%
%%%%%%%%%%%%%%%%%%%%%%%%%%%%%%%%%%%%%%%%%%%%%%%%%%%%%%%%%%%%%%%%%%%%%%%%%%%%%%%%%%%%%%%%%%%%%%%%%%%%%%%%%%%%%%%%%%%%%%%%%%%%%%%%%%%%%%%%%%%%%%%%
\section{Introduction}
\label{Sec:Intro}
%%%%%%%%%%%%%%%%%%%%%%%%%%%%%%%%%%%%%%%%%%%%%%%%%%%%%%%%%%%%%%%%%%%%%%%%%%%%%%%%%%%%%%%%%%%%%%%%%%%%%%%%%%%%%%%%%%%%%%%%%%%%%%%%%%%%%%%%%%%%%%%%
%%%%%%%%%%%%%%%%%%%%%%%%%%%%%%%%%%%%%%%%%%%%%%%%%%%%%%%%%%%%%%%%%%%%%%%%%%%%%%%%%%%%%%%%%%%%%%%%%%%%%%%%%%%%%%%%%%%%%%%%%%%%%%%%%%%%%%%%%%%%%%%%

Our results are motivated by two block substitutions $S:\Aa^{\Z^2}\to\Aa^{\Z^2}$ over a finite alphabet $\Aa$: the table tiling and the chair tiling \cite{Robin99}. Numerical computations reveal significantly different spectral behavior for these two substitutions. We consider a periodic configuration $\omega_0\in\Aa^{\Z^2}$ and study the associated Schr\"odinger operators $H_\omega$ along the sequence $\omega=S^n(\omega_0)$. Since $\omega_0$ is periodic, each iterate $S^n(\omega_0)$ gives rise to a periodic approximation of the dynamical system associated with $S$. 

Periodic approximations constitute a standard tool in spectral theory, both for numerical and analytical investigations, since their spectra can be accessed via Floquet--Bloch theory, see \cite{DaFi22-book_1,DaFi24-book_2}. Our results show that the choice of approximations is crucial: the spectral discrepancy between limits of such periodic approximations and the operator associated with $S$ is called a \emph{spectral pollution} and it strongly depends on the underlying substitution system and $\omega_0$. In contrast to the one-dimensional setting, where spectral pollution is limited to isolated eigenvalues, higher-dimensional substitution systems may exhibit a different behavior, including changes in the essential spectrum and even in the Lebesgue measure of the spectrum.

While our investigation is initiated by substitutions on $\Z^2$, several of our
results apply in a more general setting, where $\Z^2$ is replaced by $\Z^d$ or even a suitable
lattice $\Gamma$ in a homogeneous Lie group.
Such substitutions were recently introduced in \cite{BHP21-Symbolic} and include all
block substitutions \cite{Fra05,BaBePoTe24}, certain digit substitutions \cite{Vin00,FraMan22}, as
well as substitutions on the discrete Heisenberg group.

We adopt this general framework not merely for the sake of abstraction, but
because it allows for a systematic and efficient analysis of supports of
substitution iterates.
Moreover, this work highlights the \emph{testing domain} introduced in
\cite{BaBePoTe24} as a fundamental tool for the study of structural defects.

Let $\Aa$ be a finite set, called an \emph{alphabet}, and let $\Gamma$ be a countable group. We write
\[
\Pat:=\{P:M\to\Aa \mid M\subseteq\Gamma\}
\]
for the set of patches and $\supp(P):=M$ denotes the support of $P\in\Pat$.
Note that $\Aa^\Gamma\subseteq \Pat$.
The group $\Gamma$ acts on $\Pat$ by translations via
\[
(\gamma P)(x) := P(\gamma^{-1}x), \qquad \gamma,x\in\Gamma.
\]
Here we write the group operation multiplicatively, as $\Gamma$ does not need to be abelian.
For patches $P,Q\in\Pat$ we write $P\prec Q$ (and call $P$ a \emph{subpatch} of $Q$)
if there exists $\gamma\in\Gamma$ such that
\[
\gamma\supp(P)\subseteq \supp(Q)
\quad\text{and}\quad
\gamma P = Q\big|_{\gamma\supp(P)} .
\]
For $\omega\in\Aa^\Gamma$, define $W(\omega)$ as the set of all finitely supported patches $P$ with $P\prec \omega$. Let $F \subseteq \Gamma$ and $W(\omega)_F$ be the set of all $P\in W(\omega)$ such that $\supp(P)=F$.

We equip $\Aa^\Gamma$ with a complete metric $d$ inducing the product topology.
The main results presented here are independent of the particular choice of metric
\cite{BeckusThesis,BBdN18}, but fixing one simplifies the exposition.

A closed, non-empty, and $\Gamma$-invariant subset $\Omega\subseteq\Aa^\Gamma$ is called a \emph{subshift}. Then the $\Gamma$ action on subshifts defines a dynamical system. Typical examples are orbit closures, $\overline{\Orb(\omega)}$, where $\Orb(\omega):=\{\gamma\omega\mid \gamma\in\Gamma\}$ denotes the orbit of $\omega\in\Aa^\Gamma$. 
Following \cite{BeckusThesis,BBdN18}, the collection $\Jj$ of all subshifts, viewed as compact subsets of $\Aa^\Gamma$ and equipped with the Hausdorff metric, forms a compact metrizable space. Convergence in $\Jj$ can be characterized by the convergence of the associated dictionaries $W(\Omega):=\bigcup_{\omega\in\Omega}W(\omega)$ in the local pattern topology \cite{BeckusThesis}. This provides the natural setting for analyzing limit points of sequences of subshifts arising from iterating substitutions.

Here, a substitution is specified by a lattice $\Gamma$
and a substitution rule $S_0:\Aa\to\Aa^F$. 
Here $F\subseteq\Gamma$ is a finite set whose left translates by elements of a suitable subgroup $D(\Gamma)\subseteq\Gamma$ tessellate $\Gamma$.
Under suitable assumptions, the rule
$S_0$ extends to a substitution map $S:\Pat\to\Pat$ acting letterwise according to $S_0$, see Proposition~\ref{Prop:Exist_SubstMap}.

Let $W(S)$ denote the set of all $S$-legal patches $P$, that is, patches that occur in some iterate $S^n(a)$ for a letter $a\in\Aa$ and some $n\in\N$; see Definition~\ref{Def:LegalPatches}. Then there is a unique subshift $\Omega(S)$ associated with $S$, consisting of all $\omega\in\Aa^\Gamma$ such that $W(\omega)\subseteq W(S)$.
The examples of substitutions discussed here are moreover \emph{primitive}, namely there exists a $k\in\N$ such that $a\prec S^k(b)$ for all $a,b\in\Aa$.

For a finite set $B\subseteq \Gamma$ with $B=B^{-1}$ and a continuous function $V:\Aa^\Gamma\to\R$, consider the bounded self-adjoint operator family $H_\omega:\ell^2(\Gamma)\to\ell^2(\Gamma)$, $\omega\in\Aa^\Gamma$, defined by
\[
(H_{\omega}\psi)(\gamma) := \sum_{\eta\in B} \psi(\gamma\eta) + V(\gamma^{-1}\omega)\psi(\gamma),
\qquad 
\psi\in\ell^2(\Gamma),\ \gamma\in\Gamma .
\]
For $\Gamma=\Z^d$ choose $B=\{\pm e_j \mid 1\le j\le d\}$, where $e_j$ denotes the standard basis of $\Z^d$. In this case, $H_\omega$ is a discrete Schr\"odinger operator whose potential is given by the multiplication operator on $\ell^2(\Gamma)$ associated with the function $\Gamma\ni\gamma\mapsto V(\gamma^{-1}\omega)$, see also \eqref{Eq:SchrOp_SingleLetter} below for an explicit example.

Starting from a configuration $\omega_0\in\Aa^\Gamma$, we study the behavior of the iterated spectra $\sigma(H_{S^n(\omega_0)})$ as $n\to\infty$. In particular, we analyze the possible limit points of these spectra and compare them with the spectrum associated with the substitution system, namely $\sigma(H_\omega)$ for some $\omega\in\Omega(S)$. 
If $S$ is primitive, the spectrum $\sigma(H_\omega)$ and its essential spectrum $\sigma_{\mathrm{ess}}(H_\omega)$ coincide and are independent of $\omega\in\Omega(S)$, see \eqref{Eq:essSpectrum_Min} in Section~\ref{Sec:SpectralResults}.
In order to study the spectral pollution, we analyze limit points $\Omega$ of the sequence of subshifts $\bigl(\overline{\Orb(S^n(\omega_0))}\big)_n$ in $\Jj$. Then the patches in $W(\Omega)$ that are not $S$-legal are called the \emph{structural defects}, namely $W(\Omega)\setminus W(S)$.

%%%%%%%%%%%%%%%%%%%%%%%%%%%%%%%%%%%%%%%%%%%%%%%%%%%%%%%%%%%%%%%%%%%%%%%%%
\subsection{Table tiling}
\label{Subsec:TableTiling}
%%%%%%%%%%%%%%%%%%%%%%%%%%%%%%%%%%%%%%%%%%%%%%%%%%%%%%%%%%%%%%%%%%%%%%%%%

We start with the table tiling substitution defined by the substitution rule
$S_0:\Aa\to\Aa^F$ on the alphabet
$\Aa:=\{r,y,b,g\}=\{\letterbox{r},\letterbox{y},\letterbox{b},\letterbox{g}\}$ and $F=\{0,1\}^2\subseteq \Z^2=\Gamma$,
\[
\letterbox{r}\mapsto\block{y}{r}{g}{r},
\qquad
\letterbox{y}\mapsto\block{r}{b}{y}{y},
\qquad
\letterbox{b}\mapsto\block{b}{y}{b}{g},
\qquad
\letterbox{g}\mapsto\block{g}{g}{r}{b}.
\]
This substitution rule is componentwise bijective, meaning that at each position in the support $F$ it induces a bijection (i.e. permutation) on the alphabet.
Therefore it falls into the class of so-called $T$-bijective substitutions considered in Section~\ref{Sec:TwoClasses}, see Corollary~\ref{Cor:Table_Tbijective}.

The map $S_0$ naturally extends to a substitution map $S:\PatZ\to\PatZ$ by applying $S_0$ letterwise, see \cite{Fra05,BaakeGrimm13}. This works because $F$ is a fundamental domain of the subgroup\footnote{Using the notation from before we have $\Gamma=\Z^2$ and $D(\Gamma)=(2\Z)^2$.} $(2\Z)^2\subseteq\Z^2$. An illustration of this letterwise application is given in Figure~\ref{Fig:ApplSubstTable}.

\begin{figure}[htb]
\centering
\includegraphics[width=\textwidth]{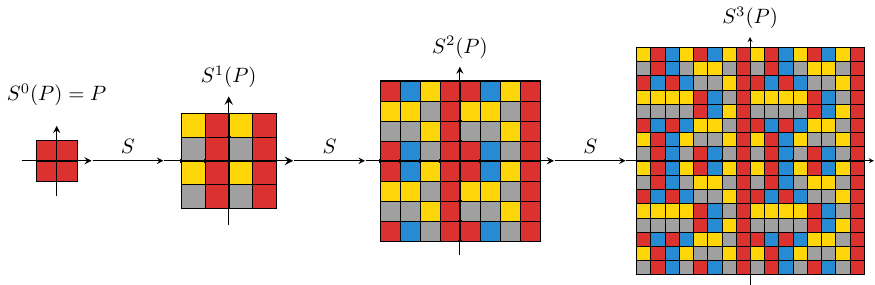}
\caption{Illustration of three successive letterwise applications of the Table tiling substitution
to an initial $2\times2$ patch fixed around the origin.}
\label{Fig:ApplSubstTable}
\end{figure}

Note that the table tiling substitution is primitive, namely $a\prec S^2(b)$ for all $a,b\in\Aa$.
Consider the periodic configuration
$\rho_{rb}\in\Aa^{\Z^2}$ defined by
\[
\rho_{rb}(n) :=
\begin{cases}
\letterbox{r}, & n\in (2\Z)^2\cup \bigl((1,1)+(2\Z)^2\bigr),\\
\letterbox{b}, & n\in \bigl((0,1)+(2\Z)^2\bigr)\cup \bigl((1,0)+(2\Z)^2\bigr).
\end{cases}
\]
Then the periodic subshifts $\overline{\Orb\big(S^n(\rho_{rb})\big)}$ converge to $\Omega(S)$ in $\Jj$, see \cite{BeckusThesis,BaBePoTe24}.
For the associated Schr\"odinger operators, this yields that the spectrum of the periodic operators $H_{S^n(\rho_{rb})}$ converges exponentially fast to $\sigma(H_\omega)$ for all $\omega\in\Omega(S)$, see \cite[Prop.~1.3]{BaBePoTe24}.

In contrast to the one-dimensional setting, where constant-letter approximations are commonly used, \cite[Cor.~1.2]{BaBePoTe24} shows that constant configurations do not converge in the present example. We therefore study the resulting structural defects and the dependence of spectral pollution on the initial configuration $\omega_0$.

Consider the operator
$H_\omega:\ell^2(\Z^2)\to\ell^2(\Z^2),\, \omega\in\Aa^{\Z^2}$,
with single-letter potential defined by
\begin{equation}
\label{Eq:SchrOp_SingleLetter}
(H_\omega\psi)(n)
:= \sum_{j=1}^2 \bigl(\psi(n+e_j)+\psi(n-e_j)\bigr)
   + v(\omega(n))\,\psi(n),
\qquad \psi\in\ell^2(\Z^2).
\end{equation}
Here $\{e_1,e_2\}$ denotes the standard basis of $\Z^2$, and
$v:\Aa\to\R$ is chosen such that their mutual differences exceed the operator norm of the adjacency operator.

\begin{figure}[htb]
\centering
\includegraphics[width=0.9\textwidth]{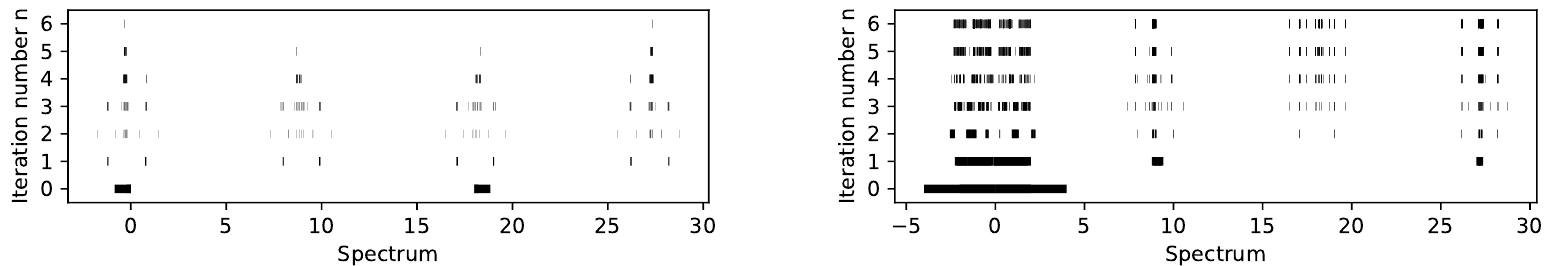}
\caption{
For the table tiling substitution, we compute the spectra of the Schr\"odinger operators with single-letter potential using Floquet--Bloch theory. 
The left panel shows $\sigma(H_{S^n(\rho_{rb})})$, while the right panel shows $\sigma(H_{S^n(\omega_0)})$ for a constant configuration, see \cite{BeTe26-Numer}.
}
\label{Fig:SpectraTable}
\end{figure}

For the choice
\[
v\bigl(\letterbox{r}\bigr) := 0,\qquad
v\bigl(\letterbox{y}\bigr) := 9,\qquad
v\bigl(\letterbox{b}\bigr) := 18,\qquad
v\bigl(\letterbox{g}\bigr) := 27,
\]
we performed numerical simulations for the constant configuration
$\omega_0\in\Aa^{\Z^2}$ with $\omega_0(n)=\letterbox{r}$, as well as for the periodic
configuration $\rho_{rb}$.
As illustrated in Figure~\ref{Fig:SpectraTable}, the spectral bands arising from
the sequence $(S^n(\omega_0))_{n\in\N}$ exhibit a significant different thickness%
\footnote{This phenomenon was observed for all constant-letter configurations.}
compared to those obtained from the convergent periodic approximations
$(S^n(\rho_{rb}))_{n\in\N}$.
This indicates the presence of spectral pollution in the limit and motivates the analysis presented here. 

By spectral pollution we mean that a limit point of the spectra $\sigma(H_{S^n(\omega_0)})$ (or the essential spectra $\sigma_{\mathrm{ess}}(H_{S^n(\omega_0)})$)
does not coincide with $\sigma(H_{\omega})$ (or $\sigma_{\mathrm{ess}}(H_{\omega})$) for some fixed $\omega\in\Omega(S)$. In general, the two sets differ. For the table tiling, we show later that even the Lebesgue measure of these sets does not need to coincide, see Corollary~\ref{Cor:TableTiling_Summary}.

To address this phenomenon, recall that $\Omega(S)$ consists of all
$\omega\in\Aa^\Gamma$ admitting only $S$-legal patches.
We show in Theorem~\ref{Thm:SubshiftPartialLimit} that it suffices to study the
$2\times2$-patches \(W(S^n(\omega_0))_{2\times2}\) when identified with $T$-patches for a so-called convenient testing domain $T\subseteq\Gamma$, see Definition~\ref{Def:TestingDomain} below.
Since our focus is on structural defects, we disregard the $S$-legal $2\times2$-patches given by
\begin{equation}
\label{Eq:LegalPatches_Table}
\begin{array}{cccccccccccc}
\block{b}{g}{b}{y} & \block{b}{g}{r}{b} & \block{b}{g}{y}{r} & \block{b}{r}{r}{b} & \block{b}{r}{y}{y} & \block{b}{y}{b}{g} &
\block{b}{y}{y}{g} & \block{g}{g}{b}{r} & \block{g}{g}{r}{b} & \block{g}{r}{b}{y} & \block{g}{r}{r}{b} & \block{g}{r}{y}{r} \\
\block{g}{y}{b}{g} & \block{g}{y}{y}{g} & \block{r}{b}{b}{r} & \block{r}{b}{b}{y} & \block{r}{b}{r}{b} & \block{r}{b}{y}{r} &
\block{r}{b}{y}{y} & \block{y}{g}{g}{r} & \block{y}{g}{g}{y} & \block{y}{r}{g}{r} & \block{y}{r}{g}{y} & \block{y}{y}{g}{g} 
\end{array}.
\end{equation}
We therefore study which sets of $S$-illegal $2\times 2$-patches stabilize under substitution iterates within $S^n(\omega_0)$.
Since every $2\times2$-patch occurring in $S^{n+1}(\omega_0)$ is contained in $S(P)$ for some $P\in W(S^n(\omega_0))_{2\times2}$, it follows that the
sets of $2\times2$-patches $W(S^n(\omega_0))_{2\times 2}$ are eventually periodic with some period $\ell_0$, see Lemma~\ref{Lem:EventualPeriodicityT} for details.
Indeed, a direct computation shows that for $j\in\{0,1\}$, $N_0=4$, and $\ell_0=2$,
\[
W(S^{N_0+j}(\omega_0))_{2\times2}
=
W(S^{N_0+j+\ell_0}(\omega_0))_{2\times2}.
\]
More precisely, the two distinct patch sets $\Qq_j := W\big(S^{N_0+j}(\omega_0)\big)_{2\times2}, \, j\in\{0,1\}$, are given by
\begin{equation}
\label{Eq:DefectsTable}
\Qq_0
=
\Bigl\{
\block{r}{r}{r}{r},
\block{r}{g}{r}{r},
\block{r}{y}{r}{g},
\block{r}{r}{r}{y}
\Bigr\}\cup  W(S)_{2\times 2}, \qquad
\Qq_1
=
\Bigl\{
\block{r}{g}{r}{y},
\block{r}{y}{r}{g},
\block{r}{r}{r}{y},
\block{r}{g}{r}{r}
\Bigr\}\cup  W(S)_{2\times 2}.
\end{equation}
Let us focus on the specific patches
\[
Q_0 := \block{r}{r}{r}{r} \in \Qq_0 \setminus W(S)
\qquad\text{and}\qquad
Q_1 := \block{r}{g}{r}{y} \in \Qq_1 \setminus W(S).
\]

As illustrated in Figure~\ref{Fig:ApplSubstTable} (see the $2\times 2$-patches around the origin), applying the substitution map $S$ twice to $Q_0$, respectively $Q_1$, reproduces the same patch at the origin. 
Since the substitution expands in all directions and we start from a $2\times2$-patch centered at the origin, the limits
\[
\rho_j := \lim_{n\to\infty} S^{2n+j}(\omega_j) \in \Aa^{\Z^2},
\qquad
\text{where } \omega_j|_{\{-1,0\}^2} = Q_j,\; j\in\{0,1\},
\]
exist. Moreover, they are independent of $\omega_j\in\Aa^{\Z^2}$, and satisfy $S^2(\rho_j)=\rho_j$, that is, $\rho_j$ is a fixed point of $S^2$.

Convergence in the product topology of $\Aa^{\Z^2}$ implies that
$S^{2n+j}(\omega_0)$ stabilizes on arbitrarily large balls for $n$ sufficiently large.
Iteratively applying the substitution as in Figure~\ref{Fig:ApplSubstTable}, one verifies that every $2\times2$-patch 
$P$ of $\rho_j$ supported inside the $y$-axis strip
\[
\Lambda := \{(i,k)\mid i\in\{-1,0\},\ k\in\Z\}
= \Z e_2 + T,
\qquad e_2:=(0,1),\ T:=\{-1,0\}^2,
\]
satisfies $P\in \Qq_j \setminus W(S)$. Moreover, every patch in $\Qq_j \setminus W(S)$ occurs in $\rho_j|_{\Lambda}$. 
The resulting infinite line of $S$-illegal $T$-patches of $\rho_j$ is called a \emph{pure line defect}.

We will show next that this pure line defect determines the structural defect
of the limit subshift $\Omega_j$ of the sequence $\overline{\Orb(S^n(\omega_0))}$, in the sense
that all $S$-illegal patches of $\rho_j$ arise from it.
To this end, note that the $2\times1$-patches
\[
R_1 := \vblock{r}{r},
\qquad
R_2 := \vblock{g}{y},
\]
are $S$-legal see \eqref{Eq:LegalPatches_Table}.
Using $S^2(\rho_j)=\rho_j$, a direct computation shows that every subpatch
$P$ of $\rho_j$ that is supported entirely in one of the half-planes
\[
\Upsilon_+ := \{(i,k)\in\Z^2 \mid i\geq 0\},
\qquad
\Upsilon_- := \{(i,k)\in\Z^2 \mid i<0\},
\]
occurs in $S^{2n}(R_l)$ for some $n\in\N$ and $l\in\{1,2\}$.
Hence, $P\in W(S)$ since $R_l\in W(S)$.

The present example falls within the framework of Theorem~\ref{Thm:SubshiftPartialLimit}, Proposition~\ref{Prop:Tperiodic_Inclusion}, and Proposition~\ref{Prop:T-bijective_FixedPoints}. Combined with the preceding considerations, this yields
\begin{equation}
\label{Eq:TableTiling_Limit}
\lim_{n\to\infty} \overline{\Orb\bigl(S^{2n+j}(\omega_0)\bigr)}
= \Omega_j
= \overline{\Orb(\rho_j)}
,\qquad \text{for } j\in\{0,1\}.
\end{equation}
The existence of two limit points comes from \eqref{Eq:DefectsTable} as discussed  in Section~\ref{Sec:SubshiftLimit}.
We show in Theorem~\ref{Thm:SubshiftPartialLimit} that convergence along arithmetic subsequences is a general phenomenon for substitutions. On the other hand a representation of limit points $\Omega_j$ by fixed points $\rho_j$ is false in general, see Example~\ref{Ex:Subst_NoFixedPoints}. 
However the existence of such fixed points allows us to classify the structural defects and derive the following spectral consequences in Section~\ref{Sec:SpectralResults}.

Consider a Schr\"odinger operator of finite range $H=(H_\omega)_{\omega\in\Aa^{\Z^2}}$, see Section~\ref{Sec:SpectralResults}. Denote by $\sigma(H_\omega)$ the \emph{spectrum} and by $\sigma_{\mathrm{ess}}(H_\omega)$ the \emph{essential spectrum} of the bounded self-adjoint operator $H_\omega$. The operator family is covariant with respect to the $\Gamma$-action and strongly continuous in $\omega\in\Aa^\Gamma$. 
Thus, using the second equality in \eqref{Eq:TableTiling_Limit} and semicontinuity of the spectrum \cite[Cor.~1.4.22]{DaFi22-book_1}, we obtain
\[
\sigma(H_\omega)\subseteq \sigma(H_{\rho_j}),
\qquad \textrm{for all } \omega\in\Omega_j \text{ and } j\in\{0,1\}.
\]
By Proposition~\ref{Prop:ConvSubsh-Spectra}, the first equality in \eqref{Eq:TableTiling_Limit} implies that the sequence of spectra $\bigl(\sigma(H_{S^n(\omega_0)})\bigr)_{n\in\N}$ has exactly two limit points with respect to the Hausdorff metric on compact subsets of $\R$, namely $\sigma(H_{\rho_0})$ and $\sigma(H_{\rho_1})$, see Proposition~\ref{Prop:ConvSubsh-Spectra} and Corollary~\ref{Cor:TableTiling_Summary}. 

Moreover, for suitable choices of the potential and using the presence of a pure line defect, we show
\[
\sigma_{\mathrm{ess}}(H_{\rho_j})\setminus\sigma_{\mathrm{ess}}(H_\omega)\neq\emptyset, \qquad \text{for all } \omega\in\Omega(S) \text{ and } j\in\{0,1\},
\]
and even that in more particular cases the Lebesgue measure can change
\[
\mathrm{Leb}\!\left(\sigma(H_{\rho_j})\setminus\sigma(H_\omega)\right)>0, \qquad \text{for all } \omega\in\Omega(S) \text{ and } j\in\{0,1\},
\]
see Corollary~\ref{Cor:TableTiling_Summary}. 
Note that changes in the essential spectrum arise from modifying the potential,
whereas changes in its Lebesgue measure arise from constructing specific potentials that assign large weights to $S$-illegal patches.
This indicates that, in general, pure line defects may indeed lead to such
spectral effects.
This observation is consistent with the numerical computations shown in
Figure~\ref{Fig:SpectraTable}, which were performed for Schrödinger operators with
single-letter potential, see \eqref{Eq:SchrOp_SingleLetter}.

All iterative constant-letter approximations $\omega_0$ (for any letter in $\Aa$) produce a pure line defect, either parallel to the $x$-axis or the $y$-axis.
Moreover, if $\omega_0\in\Aa^{\Z^2}$ contains the $2\times2$-patch $\block{r}{y}{g}{b}$, then a limit point exhibits a pure line defect in both directions.

The table tiling is a block substitution, that is a substitution for which the support $F\subseteq\Z^d$ is rectangular in the positive quadrant \cite{Fra05}.
For block substitutions, the rectangular shape of $F$ forces the boundaries of iterated supports to align with the coordinate axes, and therefore structural defects only arise along these axes.

In contrast, for more general substitutions, the boundary structure of iterated supports can be significantly more intricate. For instance, digit substitutions on $\Z^d$ \cite{Vin00,FraMan22} or substitutions on lattices in homogeneous Lie groups, such as the Heisenberg group \cite{BHP21-Symbolic}, may exhibit fundamentally different types of structural defects.

However, even within $\Z^d$, substitutions do not necessarily exhibit pure line defects. The chair tiling is a prototypical example for this phenomenon.

%%%%%%%%%%%%%%%%%%%%%%%%%%%%%%%%%%%%%%%%%%%%%%%%%%%%%%%%%%%%%%%%%%%%%%%%%
\subsection{Chair tiling}
\label{Subsec:ChairTiling}
%%%%%%%%%%%%%%%%%%%%%%%%%%%%%%%%%%%%%%%%%%%%%%%%%%%%%%%%%%%%%%%%%%%%%%%%%
We continue with the chair tiling substitution defined by the substitution rule
$S_0:\Aa\to\Aa^F$ on the alphabet
$\Aa:=\{r,y,b,g\}=\{\letterbox{r},\letterbox{y},\letterbox{b},\letterbox{g}\}$ and 
$F=\{0,1\}^2\subseteq\Z^2=\Gamma$,
\[
\letterbox{r}\mapsto\block{y}{r}{r}{g},
\qquad
\letterbox{y}\mapsto\block{y}{b}{r}{y},
\qquad
\letterbox{b}\mapsto\block{y}{b}{b}{g},
\qquad
\letterbox{g}\mapsto\block{g}{b}{r}{g}.
\]
Like the table tiling substitution, this map extends to a substitution map
$S:\PatZ\to\PatZ$ by applying $S_0$ letterwise \cite{Fra05,BaakeGrimm13}. 
It does belong to another class of substitutions, which we call \emph{purely $T$-illegal}, see Section~\ref{Subsec:Central_illegal}. Roughly speaking, after sufficiently many substitution steps, $S$-illegal $2\times 2$-patches can occur only at the origin at $T$. 
The computations are carried out in Appendix~\ref{App:ChairTilling} in Lemma~\ref{Lem:Chair_CentralIllegal} where we also provide a list of $S$-legal $2\times 2$-patches.
Note that the chair tiling substitution is primitive, namely $a\prec S^2(b)$ for all $a,b\in\Aa$.

\begin{figure}[htb]
\centering
\includegraphics[width=0.55\textwidth]{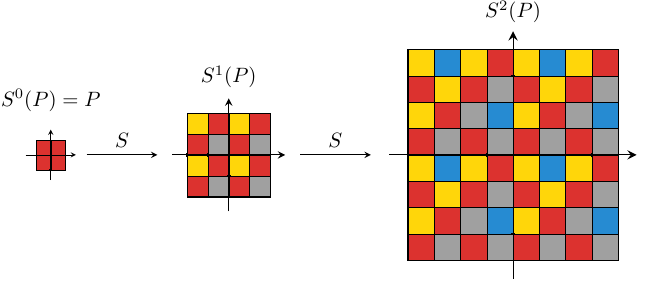}
\caption{Illustration of two successive letterwise applications of the chair tiling substitution
to an initial $2\times2$ patch fixed around the origin.}
\label{Fig:ApplSubstChair}
\end{figure}

As for the table tiling, we proceed by considering several initial configurations and by analyzing their convergence, both at the level of the dynamical systems and of the associated spectra.

Consider the periodic configuration $\rho_{\mathrm{leg}}\in\Aa^{\Z^2}$ obtained by periodizing the $2\times3$-patch $\blocktwothree{y}{b}{g}{r}{y}{b}$. A direct computation shows that $W(S(\rho_{\mathrm{leg}}))_{2\times 2}\subseteq W(S)$. Hence, the periodic subshifts $\overline{\Orb\big(S^n(\rho_{\mathrm{leg}})\big)}$, $n\in\N$, converge to $\Omega(S)$ in $\Jj$, see \cite[Thm.~2.14]{BaBePoTe24}. As in the table tiling, this implies exponentially fast convergence of the spectra of the associated Schr\"odinger operators, see \cite{BaBePoTe24}.

Let $\omega_0\in\Aa^{\Z^2}$ be the constant configuration defined by $\omega_0(n)=\letterbox{r}$ for all $n\in\Z^2$. From Figure~\ref{Fig:ApplSubstChair} we inductively obtain
\begin{equation}
\label{Eq:Chair_CentralPatch}
S^n(\omega_0)|_T=\block{g}{r}{r}{y}
\qquad\text{for all } n\in\N,\qquad T=\{-1,0\}^2 .
\end{equation}
This patch is $S$-illegal (see Appendix~\ref{App:ChairTilling}), and hence the subshifts $\overline{\Orb(S^n(\omega_0))}$, $n\in\N$, do not converge to $\Omega(S)$ in $\Jj$.

We performed the same numerical analysis with the same choice of potential as for the table tiling, see Figure~\ref{Fig:SpectraChair}. In contrast to the table tiling, we do not observe a significant thickening of the spectral bands for the poor approximations $S^n(\omega_0)$. Hence, the numerics do not indicate substantial spectral pollution as discussed next.

\begin{figure}[htb]
\centering
\includegraphics[width=0.9\textwidth]{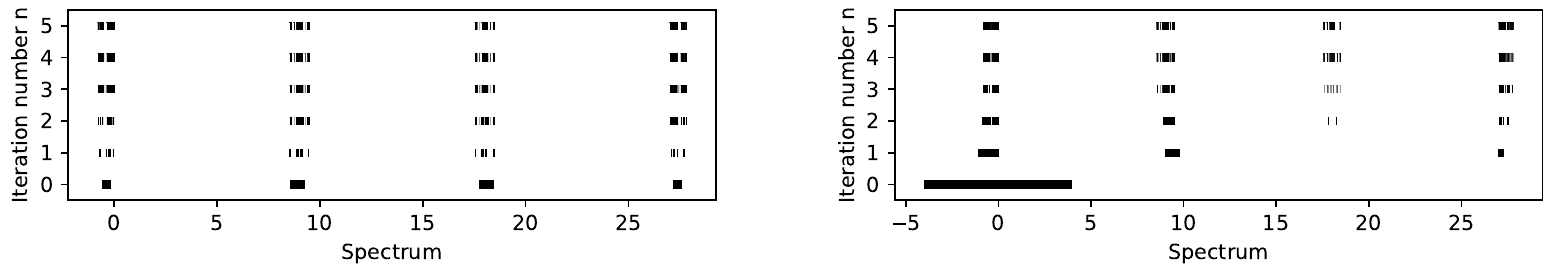}
\caption{
For the chair tiling substitution, we compute the spectra of the Schr\"odinger operators with single-letter potential using Floquet--Bloch theory. The left panel shows $\sigma(H_{S^n(\rho_{\mathrm{leg}})})$, while the right panel shows $\sigma(H_{S^n(\omega_0)})$, see \cite{BeTe26-Numer}.
}
\label{Fig:SpectraChair}
\end{figure}

We first observe that
\[
W(S^n(\omega_0))_{2\times 2} = W(S^{n+1}(\omega_0))_{2\times 2}
	= W(S)_{2\times 2}\cup\left\{ \block{g}{r}{r}{y} \right\}
\quad\text{for all } n\ge N_0=4.
\]
Since the chair tiling substitution expands in all directions, \eqref{Eq:Chair_CentralPatch} implies that $S^n(\omega_0)$ converges in $\Aa^{\Z^2}$. We denote its limit by $\rho_0$. 
The present example falls within the framework of Theorem~\ref{Thm:SubshiftPartialLimit}, Proposition~\ref{Prop:Tperiodic_Inclusion}, and Proposition~\ref{Prop:Central-illegal_FixedPoints}. Combined with the preceding considerations, this yields
\[
\lim_{n\to\infty} \overline{\Orb(S^n(\omega_0))}
= \Omega_0
= \overline{\Orb(\rho_0)},
\]
see Corollary~\ref{Cor:ChairTiling_Summary} for details. It is noteworthy that the iterative subshifts converge independently of $\omega_0\in\Aa^{\Z^2}$, see Example~\ref{Ex:Chair_NumberLimits} for details. Moreover, the limit generally differs from $\Omega(S)$.

By the computations in Appendix~\ref{App:ChairTilling}, every patch of $\rho_0$ that is contained in a half-space determined by the coordinate axes is $S$-legal\footnote{We argue as in the table tiling case, where patches $P\prec\rho_j|_{\Upsilon_\pm}$ were shown to be $S$-legal.}, and the only $S$-illegal $2\times2$-patch is the central patch $\rho_0|_T$ with $T=\{-1,0\}^2$.
Consequently, for all $\omega\in\Omega(S)$, we show
\[
\sigma_{\mathrm{ess}}(H_{\rho_0}) = \sigma_{\mathrm{ess}}(H_\omega),
\qquad\text{and}\qquad
\mathrm{Leb}\bigl(\sigma(H_{\rho_0})\Delta\sigma(H_\omega)\bigr)=0,
\]
see Corollary~\ref{Cor:ChairTiling_Summary}. Here $A\Delta B := (A\setminus B)\cup(B\setminus A)$ denotes the symmetric difference.

%%%%%%%%%%%%%%%%%%%%%%%%%%%%%%%%%%%%%%%%%%%%%%%%%%%%%%%%%%%%%%%%%%%%%%%%%
\subsection{Summary and outlook}
\label{Subsec:SummaryOutlook}
%%%%%%%%%%%%%%%%%%%%%%%%%%%%%%%%%%%%%%%%%%%%%%%%%%%%%%%%%%%%%%%%%%%%%%%%%

The numerical results for the table tiling (Figure~\ref{Fig:SpectraTable}) indicate that the limits of the (essential) spectra of $H_{S^n(\omega_0)}$ -- and even their Lebesgue measure -- may differ from the spectrum of $H_\omega$ for $\omega\in\Omega(S)$. This contrasts with the behavior of one-dimensional substitution models, where the limits of the essential spectrum, and hence the Lebesgue measure of iterated spectra, is independent of the initial configuration, see Remark~\ref{Rem-EssSpec-OneDim}. In contrast, the numerical results for the chair tiling (Figure~\ref{Fig:SpectraChair}) do not indicate such a change.

Since little is known about the spectral theory of $H_\omega$ for $\omega\in\Omega(S)$ in higher dimensions, we analyze these phenomena in a systematic way. Our results show that the choice of periodic approximations is significantly more delicate: unsuitable choices may lead to distinct limiting spectral behavior.

Our approach builds on \cite{BBdN18,BeBeCo19,BecTak25,BaBePoTe24}, which relate convergence of spectra to convergence of the underlying subshifts. In Section~\ref{Sec:SubshiftLimit}, we prove that the sequence
\[
\Theta_n := \overline{\Orb\big(S^n(\omega_0)\big)} \in\Jj, \qquad n\in\N,
\]
admits only finitely many limit points in $\Jj$ and converges along arithmetic subsequences. As illustrated by the table tiling (Section~\ref{Subsec:TableTiling}) and the chair tiling (Section~\ref{Subsec:ChairTiling}), a description of these limits via fixed points provides an effective tool to analyze spectral pollution.

The corresponding spectral consequences are derived in Section~\ref{Sec:SpectralResults} under suitable structural assumptions on these fixed points. Therefore, we provide sufficient conditions ensuring that the limit subshifts of $(\Theta_n)$ are generated by fixed points of the substitution in Section~\ref{Sec:Limits_FixedPoints}. These conditions are verified for two classes of substitutions, represented by the table and chair tiling, in Section~\ref{Sec:TwoClasses}. Further structural properties relevant for the analysis are developed in Section~\ref{Sec:T-patch_graphs}.
We refer the reader to the Phd thesis \cite{TenenbaumThesis25} for more examples and tools to study structural defects.

Beyond the spectral perspective, structural defects play a fundamental role in determining material properties in the physics literature \cite{QC2002,Kle03}, but have received comparatively little attention from a mathematical viewpoint. In the context of aperiodic order \cite{BaakeGrimm13,DaFi24-book_2}, the one-dimensional model of Sturmian Hamiltonians (or the Kohmoto model) plays a central role. For this model, spectral pollution has recently been analyzed in \cite{BecBelTho25} via a detailed study of structural defects, leading to further insights into the associated Kohmoto butterfly.

\section*{Acknowledgements}
We are grateful to Philipp Bartmann, Alan Lew and Yannik Thomas for insightful discussions. S.B. expresses gratitude to Daniel Riveline for insightful discussions on defects in physics and for pointing out relevant references related to this subject. 
The research for this article was conducted at the Israel Institute for Advanced Studies, as part of the Research Group \textit{Group Analysis, Geometry, and Spectral Theory of Graphs} (2025). R.B., S.B. and F.P. are grateful for the excellent working conditions.
This work was partially supported by the Deutsche Forschungsgemeinschaft [BE 6789/1-1 to S.B.] and [PO 2383/2-1 to F.P.]. R.B.\@ and L.T.\@ were supported by the Israel Science Foundation (ISF Grants No. 844/19 and 2362/25). L.T. was supported by the Israel Science Foundation grant No. 1647/23 (PI B. Solomyak).

%%%%%%%%%%%%%%%%%%%%%%%%%%%%%%%%%%%%%%%%%%%%%%%%%%%%%%%%%%%%%%%%%%%%%%%%%
\section{Spectral results}
\label{Sec:SpectralResults}
%%%%%%%%%%%%%%%%%%%%%%%%%%%%%%%%%%%%%%%%%%%%%%%%%%%%%%%%%%%%%%%%%%%%%%%%%

In this section we derive spectral consequences for Schr\"odinger operators associated with subshifts. 
Assuming that structural defects are described by particular fixed points $\rho\in\Aa^\Gamma$, as illustrated by the table and chair tilings, we analyze the behavior of the essential spectrum and the Lebesgue measure of the spectrum. In particular, we show that both may change or remain stable.

Throughout the remainder of this work, we replace $\Z^d$ by a general countable discrete group $\Gamma$ and write the group operation multiplicatively. 
We further fix a left-invariant proper metric $d$ on $\Gamma$ and 
\[
B(\gamma,r):=\{\eta\in\Gamma \mid d(\gamma,\eta)<r\}
\]
denotes the open ball in $\Gamma$ with center $\gamma\in\Gamma$ and radius $r>0$.

A {\em Schrödinger operator $H$ with finite range} is an operator family $H=(H_\omega)_{\omega\in\Aa^\Gamma}$ of self-adjoint operators 
$H_\omega : \ell^2(\Gamma)\to\ell^2(\Gamma)$, $\omega\in\Aa^\Gamma$, defined by
\begin{equation}\label{eq:Hamiltonian}
(H_\omega\psi)(\gamma)
=
\sum_{\eta\in B}
t_\eta(\gamma^{-1}\omega)\psi(\gamma\eta),
\qquad \psi\in\ell^2(\Gamma),\ \gamma\in\Gamma,
\end{equation}
where $B\subseteq\Gamma$ is finite with $B=B^{-1}$ and $t_\eta:\Aa^\Gamma\to\C$ are continuous and satisfy
$
t_\eta(\omega)
=
\overline{t_{\eta^{-1}}(\eta\omega)}.
$
This guarantees self-adjointness of $H_\omega$ for all $\omega$. We call $(t_\eta)_{\eta\in B}$, the \emph{coefficients} of $H$. 

The operators defined in \eqref{Eq:SchrOp_SingleLetter} fit into this framework by setting $t_\eta\equiv 1$ for $\eta=\pm e_j$, where $e_j$ denotes the standard basis of $\Z^2$, and by defining $t_0(\omega)=v(\omega(e))$ with $v:\Aa\to\R$. 
This particular potential $t_0$ is not only continuous, but also belongs to the class of so-called pattern equivariant functions.
A coefficient $t_\eta:\Aa^\Gamma\to\C$ is called \emph{pattern equivariant} if there exists $R_\eta>0$ such that
\[
\text{for all } \omega,\rho\in\Aa^\Gamma : \quad
\omega|_{B(e,R_\eta)}=\rho|_{B(e,R_\eta)}
\quad \Longrightarrow\quad
t_\eta(\omega)=t_\eta(\rho).
\]
If all coefficients $t_\eta$, $\eta\in B$, are pattern equivariant, then $(H_\omega)_{\omega\in\Aa^\Gamma}$ is called a \emph{pattern equivariant Schrödinger operator with finite range}.

Define the unitary operator $U_\gamma:\ell^2(\Gamma)\to\ell^2(\Gamma)$ by $(U_\gamma\psi)(x):=\psi(\gamma^{-1}x)$. Then $H$ is covariant, i.e. 
\[
U_\gamma^{*}H_{\gamma\omega}U_\gamma=H_\omega
\qquad\text{for all }\omega\in\Aa^\Gamma,\ \gamma\in\Gamma.
\]
Moreover, continuity of the coefficients $t_\eta$ implies that the family $H=(H_\omega)_{\omega\in\Aa^\Gamma}$ is strongly continuous in $\omega\in\Aa^\Gamma$. For a subshift $\Omega\subseteq\Aa^\Gamma$, define the compact set
\[
\sigma(H_\Omega):=\overline{\bigcup_{\omega\in\Omega}\sigma(H_\omega)} \subseteq \R.
\]
If $\Omega=\overline{\Orb(\omega)}$, then covariance and strong continuity yield $\sigma(H_\Omega)=\sigma(H_\omega)$ by standard arguments, see e.g. \cite[Cor 1.4.22]{DaFi22-book_1} or \cite[Lem~3.2]{BecTak25}.

The table and the chair tiling substitutions are primitive and hence their associated subshifts are minimal \cite[Ch.~4]{BaakeGrimm13}.
Here, a subshift $\Omega$ is called \emph{minimal} if $\overline{\Orb(\omega)}=\Omega$ for every $\omega\in\Omega$. Moreover, there exist many substitution families beyond $\Z^d$ that share this minimality property if the substitution is primitive, see \cite{BHP21-Symbolic}. 
For a minimal subshift $\Omega$, \cite[Cor.~4.5]{BeLeLiSe17} asserts
\begin{equation}
\label{Eq:essSpectrum_Min}
\sigma(H_\Omega)=\sigma(H_\omega)=\sigma_{\mathrm{ess}}(H_\omega), \qquad \text{for all } \omega\in\Omega(S).
\end{equation}

The convergence of subshifts in the Hausdorff metric on $\Jj$ is tightly connected to the spectral convergence as discovered in \cite{BBdN18}.

\begin{proposition}[\cite{BBdN18}, Cor.~1]
\label{Prop:ConvSubsh-Spectra}
Let $\Gamma$ be a countable and amenable group. Then the following are equivalent.
\begin{enumerate}[(i)]
\item The subshifts $(\Omega_n)_{n\in\N}$ converge in $\Jj$ to $\Omega$.
\item For each Schrödinger operator of finite range $H=(H_\omega)_{\omega\in\Aa^\Gamma}$, the spectra $(\sigma(H_{\Omega_n}))_{n\in\N}$ converge in the Hausdorff metric on $\R$ to $\sigma(H_\Omega)$.
\end{enumerate}
\end{proposition}

Under suitable additional assumptions, quantitative results are available \cite{BeBeCo19,BecTak25}. In particular, \cite{BaBePoTe24} proves exponential convergence of $\bigl(\overline{\Orb(S^n(\omega_0))}\bigr)_n$ to $\Omega(S)$ whenever convergence occurs. Here we consider the complementary case where $\bigl(\overline{\Orb(S^n(\omega_0))}\bigr)_n$ does not converge to $\Omega(S)$ but admits finitely many limit points. Straightforward modifications of \cite{BaBePoTe24} show that the corresponding subsequences converge exponentially fast to these limit points, and hence so do the associated spectra by \cite{BecTak25}. Consequently, the spectral pollution stabilizes exponentially fast along the approximations.

In this spirit, we continue by showing how the description of structural defects via suitable $S$-fixed points leads to spectral consequences for the associated Schrödinger operators.

\subsection{The essential spectrum}
\label{Subsec:EssentialSpectr}

Our first result is applicable to the chair tiling and shows that, despite the presence of structural defects, the essential spectrum remains unchanged.

For this statement, we assume that $\Gamma$ is of \emph{strict polynomial growth} with respect to the left-invariant proper metric $d$ on $\Gamma$. More precisely, there are constants $c_0,c_1>0$ and $d\ge 1$ such that
\[
c_0 r^d \le \sharp B(e,r) \le c_1 r^d,\qquad r\ge 1.
\]
This allows us to employ the cut-off functions introduced in \cite{BecTak25}. It can be relaxed to amenable groups by adapting the construction of the cut-off functions accordingly, see e.g. \cite[Sec.~4.6]{BeckusHabil}.

\begin{theorem} 
\label{Thm:essspec-unchanged}
	Let $H=(H_{\omega})_{\omega\in\Aa^\Gamma}$ be a pattern equivariant Schrödinger operator of finite range and $\Gamma$ be of strict polynomial growth.
	Let $\Omega\in\Jj$ be minimal and suppose that $\rho \in \mathcal{A}^{\Gamma}$ is an $S$-fixed point that satisfies the following condition: there is a finite set $T \subseteq \Gamma$ such that for all $x \in \Gamma$ and $r \geq 1$ with $B(x,r) \cap T = \emptyset$, we have $\rho|_{B(x,r)} \in W(\Omega)$. Then 
\[
	\sigma(H_\Omega)=\sigma_{ess}(H_{w}) = \sigma_{ess}(H_{\rho}),\qquad \text{for all } \omega\in\Omega.
\] 
\end{theorem}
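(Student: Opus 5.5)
The plan is to prove the two inclusions $\sigma_{ess}(H_\rho)\subseteq\sigma(H_\Omega)$ and $\sigma(H_\Omega)\subseteq\sigma_{ess}(H_\rho)$. The remaining equality $\sigma(H_\Omega)=\sigma_{ess}(H_\omega)$ for $\omega\in\Omega$ is exactly \eqref{Eq:essSpectrum_Min}, since $\Omega$ is minimal. The fixed-point property of $\rho$ will not really be needed; only the hypothesis on balls avoiding $T$ enters.

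\emph{Step 1: $\sigma(H_\Omega)\subseteq\sigma_{ess}(H_\rho)$.} Let $R$ bound the radii $R_\eta$ of the pattern-equivariant coefficients together with the range of $B$. Pick $\omega\in\Omega$ and $E\in\sigma(H_\omega)$. By a Weyl-sequence argument with the cut-off functions of \cite{BecTak25} (this is where strict polynomial growth is used), for each $\varepsilon>0$ there is a finitely supported $\psi$ with $\|\psi\|=1$, $\operatorname{supp}\psi\subseteq B(x,r)$ and $\|(H_\omega-E)\psi\|<\varepsilon$. The patch $\omega|_{B(x,r+R)}$ lies in $W(\Omega)$.

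By minimality, $\Omega$ is the orbit closure of any $\rho'$ with $W(\rho')\subseteq W(\Omega)$. Apply this to an accumulation point of the translates $\gamma_k^{-1}\rho$ with $\gamma_k\to\infty$, whose patches avoid $T$ and hence lie in $W(\Omega)$. Since $W(\Omega)$ is translation invariant, uniform recurrence then shows that $\omega|_{B(x,r+R)}$ occurs in $\rho$ at infinitely many, far apart positions $\gamma_k$ with $\gamma_kB(x,r+R)\cap T=\emptyset$.

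Pattern equivariance gives $\|(H_\rho-E)U_{\gamma_k}\psi\|<\varepsilon$ with translated supports pairwise disjoint. This produces an orthonormal singular Weyl sequence, so $E\in\sigma_{ess}(H_\rho)$.

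\emph{Step 2: $\sigma_{ess}(H_\rho)\subseteq\sigma(H_\Omega)$.} Let $E\in\sigma_{ess}(H_\rho)$ and take a Weyl sequence $\psi_k\rightharpoonup0$ with $\|(H_\rho-E)\psi_k\|\to0$. Let $\chi$ be the indicator of $B(e,r_0)$, where $r_0$ is large enough that the $R$-neighbourhood of $T$ lies inside $B(e,r_0)$. Then $\chi\psi_k\to0$ in norm, since $\chi$ has finite rank. Replacing $\psi_k$ by $(1-\chi)\psi_k$ and using that $[H_\rho,\chi]$ has finite rank, we obtain a Weyl sequence supported outside $B(e,r_0)$.

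Now localize with the smooth cut-offs from \cite{BecTak25}. Decompose $\phi_k$ via a partition of unity $\{\theta_j\}$ subordinate to balls $B(x_j,r)$ of fixed large radius $r$, chosen so that the balls meeting the support of $\phi_k$ avoid $T$; this is possible by taking $r_0\gg r$. The commutators satisfy $\|[H,\theta_j]\|=O(1/r)$ with bounded overlap, by polynomial growth. Hence some $j$ satisfies $\|(H_\rho-E)\theta_j\phi_k\|\le C(\delta+1/r)\|\theta_j\phi_k\|$.

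Since $\rho|_{B(x_j,r+R)}\in W(\Omega)$, it occurs in some $\omega\in\Omega$. Transporting $\theta_j\phi_k$ there gives $\operatorname{dist}(E,\sigma(H_\omega))\le C(\delta+1/r)$. Letting $r\to\infty$ and $\delta\to0$, and using that $\sigma(H_\omega)=\sigma(H_\Omega)$ is closed, we conclude $E\in\sigma(H_\Omega)$.

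The main obstacle is the localization in Step 2, namely the quantitative cut-off estimate showing that some piece of a Weyl vector is itself an approximate eigenvector with error $O(1/r)$. This is the reason for assuming strict polynomial growth. I would import it directly from the cut-off construction in \cite{BecTak25}.
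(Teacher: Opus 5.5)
Your proposal is correct and follows essentially the same route as the paper. Step~1 is the argument of Lemma~\ref{Lem:EssSpectr_InclInRho}, and Step~2 is Lemma~\ref{Lem:EssSpectr_InclInOmega(S)} (a Weyl sequence pushed away from $T$, localized with the cut-offs of \cite{BecTak25}, then transported into $\Omega$), the main cosmetic difference being your discrete bounded-overlap partition of unity in place of the family $(\chi_\gamma)_{\gamma\in\Gamma}$; one small correction is that strict polynomial growth is not needed in Step~1, since finitely supported approximate eigenvectors of $H_\omega$ exist by density of $C_c(\Gamma)$, so growth enters only through the localization in Step~2.
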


For primitive block substitutions on $\Z^d$, purely $T$-illegality (Definition~\ref{Def:central illegal}) yields a description of the iterated limit subshifts by fixed points $\rho$, see Proposition~\ref{Prop:Central-illegal_FixedPoints}. By Lemma~\ref{Lem:Central-illegal_condition}, these fixed points satisfy the assumption of Theorem~\ref{Thm:essspec-unchanged}, which we now illustrate for the chair tiling before proving the theorem.

\begin{corollary}
\label{Cor:ChairTiling_Summary}
Let $S:\PatZ\to\PatZ$ be the chair tiling substitution, with subshift $\Omega(S)$, and let $T=\{-1,0\}^2$. Let $H=(H_\omega)_{\omega\in\Aa^{\Z^2}}$ be a finite-range Schr\"odinger operator. Then for every $\omega_0\in\Aa^{\Z^2}$ the following assertions hold. 
\begin{enumerate}[(a)]
\item There exist a $k=k(\omega_0)\in\N$ and finitely many $S$-fixed points $\rho_1,\ldots,\rho_k\in \Aa^{\Z^2}$ satisfying 
\[
x\in\Z^2,\ r\ge 1,\ B(x,r)\cap T=\emptyset \ \Longrightarrow\  \rho_j|_{B(x,r)}\in W(S) \, \text{ for all } j\in\{ 1,\ldots, k \},
\]
and
\[
\lim_{n\to\infty} \overline{\Orb(S^n(\omega_0))} = \Omega_0= \bigcup_{j=1}^k\overline{\Orb(\rho_j)}. 
\]

\item For each $\rho\in\Omega_0$ and all $\omega\in\Omega(S)$,
\[
\sigma_{\mathrm{ess}}(H_{\rho})=\sigma_{\mathrm{ess}}(H_\omega)
\qquad\text{and}\qquad
\mathrm{Leb}\bigl(\sigma(H_{\rho})\Delta\sigma(H_\omega)\bigr)=0.
\]
\end{enumerate}
\end{corollary}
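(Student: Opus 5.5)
The plan is to assemble the corollary from the general results announced in the introduction, and then add one extra spectral step for the Lebesgue-measure claim.

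\emph{Part (a): structure of the limit.}
1. By Lemma~\ref{Lem:Chair_CentralIllegal}, the chair tiling is a primitive block substitution that is purely $T$-illegal with $T=\{-1,0\}^2$.
2. Theorem~\ref{Thm:SubshiftPartialLimit} gives convergence of $\Theta_n=\overline{\Orb(S^n(\omega_0))}$ along arithmetic subsequences, with finitely many limit points.
3. Proposition~\ref{Prop:Central-illegal_FixedPoints} shows that each such limit point is a finite union $\bigcup_j\overline{\Orb(\rho_j)}$ of orbit closures of $S$-fixed points $\rho_j$.
4. To get a single limit $\Omega_0$, I would use that for purely $T$-illegal substitutions the sets $W(S^n(\omega_0))_T$ stabilize after finitely many steps, period $\ell_0=1$. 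This is the computation $W(S^n(\omega_0))_{2\times2}=W(S^{n+1}(\omega_0))_{2\times2}$ from the introduction, extended to arbitrary $\omega_0$ as in Example~\ref{Ex:Chair_NumberLimits}. With period one, all the arithmetic subsequences share the same limit.
5. The displayed implication on balls $B(x,r)$ avoiding $T$ is exactly Lemma~\ref{Lem:Central-illegal_condition} applied to each $\rho_j$.

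\emph{Part (b), essential spectrum.}
1. Primitivity makes $\Omega(S)$ minimal, and $W(\Omega(S))=W(S)$.
2. For $\rho=\rho_j$, Theorem~\ref{Thm:essspec-unchanged} with $\Omega=\Omega(S)$ gives $\sigma_{\mathrm{ess}}(H_{\rho_j})=\sigma_{\mathrm{ess}}(H_\omega)=\sigma(H_{\Omega(S)})$. The theorem requires pattern equivariance, whereas the corollary allows any finite-range $H$ with continuous coefficients. I would handle this by uniform approximation: each $t_\eta$ is a uniform limit of pattern equivariant functions, and both $\sigma_{\mathrm{ess}}$ and $\sigma$ are $1$-Lipschitz in the Hausdorff metric under norm perturbations. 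So equality of essential spectra passes to the limit.
3. A general $\rho\in\Omega_0$ lies in $\overline{\Orb(\rho_j)}$ for some $j$. If $\rho$ is a translate of $\rho_j$, covariance gives the claim directly.
4. Otherwise $\rho$ is a limit of translates $\gamma_n\rho_j$ with $\gamma_n\to\infty$. Every finite patch of $\rho$ then occurs far from $T$ in $\rho_j$, so $W(\rho)\subseteq W(S)$, i.e.\ $\rho\in\Omega(S)$, and \eqref{Eq:essSpectrum_Min} applies.

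\emph{Part (b), Lebesgue measure.}
1. For any $\rho\in\Omega_0$ we have $\sigma(H_\rho)\supseteq\sigma_{\mathrm{ess}}(H_\rho)=\sigma(H_\omega)$.
2. Hence $\sigma(H_\rho)\Delta\sigma(H_\omega)=\sigma(H_\rho)\setminus\sigma_{\mathrm{ess}}(H_\rho)$. This set consists of isolated eigenvalues of finite multiplicity of the bounded self-adjoint operator $H_\rho$.
3. A set of isolated points in $\R$ is countable, so it has Lebesgue measure zero.

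\emph{Main obstacle.}
The hardest step is making sure that the limit $\Omega_0$ is a single subshift for every $\omega_0$, rather than several periodic limit points. This needs the stabilization of $T$-patch sets with period one for the chair tiling. I would try to derive it from the explicit list in Appendix~\ref{App:ChairTilling}: after $N_0$ steps the only illegal $T$-patches are central ones, and they are fixed by $S$. If that does not hold for all $\omega_0$, I would fall back on passing to a power $S^{\ell_0}$, which has the same fixed-point description.
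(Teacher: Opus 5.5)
Your proposal is correct and follows the paper's proof. Part (a) comes from Lemma~\ref{Lem:Chair_CentralIllegal}, Theorem~\ref{Thm:SubshiftPartialLimit}, Proposition~\ref{Prop:Central-illegal_FixedPoints}, Lemma~\ref{Lem:Central-illegal_condition} and Example~\ref{Ex:Chair_NumberLimits}, and part (b) from Theorem~\ref{Thm:essspec-unchanged} plus the fact that the discrete spectrum is countable. Two points on step 4 of (a): period one comes from all cycles of the chair's $T$-patch graph being loops (via Proposition~\ref{Prop:Bound_LimitPoints_CentralIllegal}), not from pure $T$-illegality alone; and your fallback to a power $S^{\ell_0}$ would not give the single limit the statement claims.

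Your two extra steps are valid and make explicit what the paper's proof leaves implicit. The first handles the fact that the paper applies a theorem stated only for pattern-equivariant $H$: you approximate continuous coefficients uniformly by pattern-equivariant ones (keeping $t_\eta(\omega)=\overline{t_{\eta^{-1}}(\eta\omega)}$) and use that $\sigma_{\mathrm{ess}}$ moves by at most $\|A-B\|$ in Hausdorff distance. The second justifies the paper's remark that it suffices to take $\rho=\rho_j$, by showing $\overline{\Orb(\rho_j)}\setminus\Orb(\rho_j)\subseteq\Omega(S)$.
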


\begin{remark}
Although every patch of $\rho_0\in\Aa^{\Z^2}$ supported on a ball disjoint from $T$ is $S$-legal for the chair tiling, patches of $\rho_0$ supported on annuli $B(0,r_1)\setminus B(0,r_2)$ with $r_1>r_2$ need not be $S$-legal. A priori, such patches could contribute to the essential spectrum, which is, however, not the case.
\end{remark}

\begin{proof}
Since the chair tiling substitution is primitive, the subshift $\Omega(S)$ is minimal \cite[Cor.~4.9]{BaakeGrimm13}. Consequently, both the spectrum and the essential spectrum of $H_\omega$ are independent of $\omega\in\Omega(S)$, see \eqref{Eq:essSpectrum_Min}.
By Lemma~\ref{Lem:Chair_CentralIllegal}, the substitution is purely $T$-illegal. Assertions (a) therefore follows from Theorem~\ref{Thm:SubshiftPartialLimit}, Proposition~\ref{Prop:Central-illegal_FixedPoints}, Lemma~\ref{Lem:Central-illegal_condition} and Example~\ref{Ex:Chair_NumberLimits}. For (b) note first that it suffices to restrict to $\rho=\rho_j$ for each $j\in\{ 1,\ldots, k \}$. Thus, (b) follows from (a) together with Theorem~\ref{Thm:essspec-unchanged} and the fact that the discrete spectrum of a self-adjoint operator has Lebesgue measure zero.
\end{proof}

We now show two auxiliary lemmas from which Theorem~\ref{Thm:essspec-unchanged} follows.

\begin{lemma}
\label{Lem:EssSpectr_InclInOmega(S)}
Let $H$ be a pattern equivariant Schrödinger operator of finite range and $\Gamma$ of strict polynomial growth.
Suppose $\omega,\rho\in\Aa^\Gamma$ are such that for all $x\in\Gamma$ and $r\ge 1$ with $B(x,r)\cap T=\emptyset$, $\rho|_{B(x,r)}\in W(\omega)$. Then
\[
\sigma_{\text{ess}}(H_\rho) \subseteq \sigma(H_\omega).
\]
\end{lemma}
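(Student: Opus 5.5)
The plan is a Weyl-sequence argument combined with localization away from the finite set $T$. Fix $E\in\sigma_{\mathrm{ess}}(H_\rho)$. By Weyl's criterion there is a sequence $(\psi_k)$ of unit vectors with $\psi_k\to 0$ weakly and $\|(H_\rho-E)\psi_k\|\to 0$. Since $T$ is finite and $\psi_k\rightharpoonup 0$, we have $\|\psi_k\mathbf 1_{K}\|\to 0$ for every finite $K\subseteq\Gamma$. Combined with the finite range of $H$, this lets us cut off a large finite neighbourhood of $T$: for fixed $R$, replace $\psi_k$ by $(1-\chi_R)\psi_k$, where $\chi_R$ is a cut-off near $T$. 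The commutator $[H_\rho,\chi_R]$ is supported near the boundary of the cut-off region, and it acts on $\psi_k$ restricted to a finite set. Hence we may assume each $\psi_k$ is supported in $\Gamma\setminus B(T,R_k)$ with $R_k\to\infty$, at the cost of an $o(1)$ error. This reduces the claim to: if $\varphi$ is a unit vector supported far from $T$ with $\|(H_\rho-E)\varphi\|\le\varepsilon$, then $\operatorname{dist}(E,\sigma(H_\omega))\le C\varepsilon^{\theta}$ for some fixed $\theta>0$.

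For this step I would use the cut-off functions of \cite{BecTak25}, available by the strict polynomial growth assumption. There are functions $\eta_{x,r}$ supported in $B(x,r)$ with $0\le\eta\le 1$ and $\|[H,\eta_{x,r}]\|\lesssim 1/r$ uniformly, forming a partition of unity $\sum_x\eta_{x,r}^2\approx 1$ over a net of centres $x$ with bounded overlap. The standard localization (IMS-type) inequality
\[
\sum_x\|(H_\rho-E)\eta_{x,r}\varphi\|^2\le 2\|(H_\rho-E)\varphi\|^2+C r^{-2}\|\varphi\|^2
\]
then yields a centre $x$ with $\eta_{x,r}\varphi\neq 0$ and
\[
\|(H_\rho-E)\eta_{x,r}\varphi\|\le C'(\varepsilon+r^{-1})\|\eta_{x,r}\varphi\|.
\]
Because $\varphi$ vanishes on $B(T,R_k)$ and $r\ll R_k$, only centres with $d(x,T)\ge R_k-r$ contribute. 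For such $x$, and after enlarging the ball by $R_H:=\max_\eta R_\eta+\max_{b\in B} d(e,b)$, the set $B(x,r+R_H)$ is disjoint from $T$.

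Next comes the transfer to $\omega$ via pattern equivariance. The hypothesis gives $\rho|_{B(x,r+R_H)}\in W(\omega)$, so there is $\gamma$ with $\gamma\rho$ agreeing with $\omega$ on a translate of that ball; one has to be careful with the left/right conventions of the action in $\Gamma$. Pattern equivariance of the coefficients and covariance $U_\gamma^*H_{\gamma\rho}U_\gamma=H_\rho$ then imply that $H_\rho$ and the translate of $H_\omega$ coincide on functions supported in $B(x,r)$, including the output values. Translating $\eta_{x,r}\varphi$ accordingly gives a vector $\phi$ with
\[
\|(H_\omega-E)\phi\|\le C'(\varepsilon+r^{-1})\|\phi\|.
\]
Since $H_\omega$ is self-adjoint, this forces $\operatorname{dist}(E,\sigma(H_\omega))\le C'(\varepsilon+r^{-1})$. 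We let $k\to\infty$, so that $\varepsilon\to 0$ and $R_k\to\infty$, and choose $r=r_k\to\infty$ with $r_k\ll R_k$. Closedness of $\sigma(H_\omega)$ then gives $E\in\sigma(H_\omega)$.

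The main obstacle is the localization step: producing cut-offs whose commutators with $H$ are $O(1/r)$ while the partition of unity has bounded overlap. This is exactly where the strict polynomial growth is used, and I will take the construction from \cite{BecTak25} essentially as given. A secondary point of care is matching the radius condition "$r\ge1$, $B(x,r)\cap T=\emptyset$" with the enlarged balls needed to absorb the ranges $R_\eta$ and $B$.
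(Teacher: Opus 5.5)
Your proposal is correct and follows essentially the same route as the paper: a Weyl sequence pushed away from $T$, localization with the cut-off functions of \cite{BecTak25} through an IMS-type averaging argument to find a ball disjoint from $T$ on which the localized vector remains an approximate eigenvector, and transfer to a translate of $\omega$ via pattern equivariance and covariance. The differences are cosmetic. The paper averages over all translates $\hat\chi_\gamma$, $\gamma\in\Gamma$, of a single triangular cut-off using $\sum_\gamma\|\hat\chi_\gamma\phi\|^2=\|\chi\|^2\|\phi\|^2$, rather than a normalized partition of unity over a net. Your explicit enlargement of the ball by the hopping range of $B$ is, if anything, more careful than the radius the paper uses.
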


\begin{proof}
Let $R_0>0$ be such that $T\subseteq B(e,R_0)$. Since $H$ is a pattern equivariant Schr\"odinger operator of finite range, there is an $R_H>0$ such that for all $\xi,\zeta\in\Aa^\Gamma$ and $\psi\in\ell^2(\Gamma)$, 
\[
\xi|_{B(\gamma,R_H)}=\zeta|_{B(\gamma,R_H)}
\ \Rightarrow\
(H_\xi\psi)(\gamma)=(H_\zeta\psi)(\gamma).
\]
In the following we use that, in the terminology of \cite{BecTak25}, $H$ is a dynamically-defined operator with kernel $k:\Gamma\times\Aa^\Gamma\to\C$ given by
\[
k(\eta,\omega)
=
\begin{cases}
t_\eta(\omega), & \eta\in B,\\
0, & \text{otherwise}.
\end{cases}
\]
Let $E\in\sigma_{\mathrm{ess}}(H_\rho)$. By Weyl's criterion there exist $0\neq\psi_n\in\ell^2(\Gamma)$ and $\varepsilon_n\to 0$ such that
\[
\supp(\psi_n)\cap B(e,2n+R_H+R_0)=\emptyset,
\qquad
\|(H_\rho-E)\psi_n\|\leq \varepsilon_n\|\psi_n\|,
\]
where $\|\cdot\|$ denotes the $\ell^2$-norm on $\ell^2(\Gamma)$. Fix $n$ and set
\[
\chi:=\chi_n:\Gamma\to[0,1], 
\qquad
\chi(\gamma)
:=
\left(\frac{n-d(e,\gamma)}{n}\right)\mathbf{1}_{B(e,n)}(\gamma).
\]
For $\eta\in\Gamma$, define the shifted functions $\chi_\eta(g):=\chi(\eta^{-1}g)$ and $\chi^\eta(g):=\chi(g\eta)$. Since $\Gamma$ is of strict polynomial growth, there exists a constant $C>0$ such that
\begin{equation}
\label{Eq:CutOff_Estimate}
\frac{\|\chi^\eta-\chi\|}{\|\chi\|}
\le 
d(e,\eta)\frac{C}{n},
\end{equation}
where $C>0$ is independent of $n$, see \cite[Lem.~4.5]{BecTak25}. We use $\chi$ as a cut-off function by implementing it as the multiplication operator $\hat\chi\in\Ll(\ell^2(\Gamma))$ defined by $(\hat\chi\psi)(\gamma)=\chi(\gamma)\psi(\gamma)$.

Writing $[A,B]=AB-BA$ for the commutator of operators $A$ and $B$, we obtain
\[
\sum_{\gamma\in\Gamma}\|(H_\rho-E)\hat\chi_\gamma\psi_n\|^2
\leq
2\sum_{\gamma\in\Gamma}\|[H_\rho,\hat\chi_\gamma]\psi_n\|^2
+
2\sum_{\gamma\in\Gamma}\|\hat\chi_\gamma(H_\rho-E)\psi_n\|^2.
\]
By \cite[Lem.~4.2]{BecTak25},
\[
\sum_{\gamma\in\Gamma}\left\|\hat{\chi}_\gamma(H_\rho-E)\psi_n\right\|^2
=
\|\chi\|^2 \|(H_\rho-E)\psi_n\|^2
\le
\varepsilon_n^2 \|\chi\|^2 \|\psi_n\|^2
=
\varepsilon_n^2
\sum_{\gamma\in\Gamma}\|\hat{\chi}_\gamma\psi_n\|^2.
\]
Using again \cite[Lem.~4.2]{BecTak25} and the kernel representation,
\[
\sum_{\gamma\in\Gamma}\|[H_\rho,\hat\chi_\gamma]\psi_n\|^2
\le
\left(
\sum_{\eta\in B}
\|k(\eta^{-1},(\cdot)^{-1}\rho)\|_\infty
\frac{\|\chi^\eta-\chi\|}{\|\chi\|}
\right)^2
\sum_{\gamma\in\Gamma}\|\hat\chi_\gamma\psi_n\|^2,
\]
where 
$\|k(\eta^{-1},(\cdot)^{-1}\rho)\|_\infty
:=
\sup_{\gamma\in\Gamma}|t_{\eta^{-1}}(\gamma^{-1}\rho)|
<\infty$.
Hence,
\[
\sum_{\gamma\in\Gamma}\|(H_\rho-E)\hat\chi_\gamma\psi_n\|^2
\le
2\left(
\left(
\sum_{\eta\in B}
\|k(\eta^{-1},(\cdot)^{-1}\rho)\|_\infty
\frac{\|\chi^\eta-\chi\|}{\|\chi\|}
\right)^2
+
\varepsilon_n^2
\right)
\sum_{\gamma\in\Gamma}\|\hat\chi_\gamma\psi_n\|^2.
\]
Set 
\[
\|H\|_S
:=
\sum_{\eta\in B}
\|k(\eta^{-1},(\cdot)^{-1}\rho)\|_\infty d(e,\eta)
<\infty.
\]
Using \eqref{Eq:CutOff_Estimate}, the previous considerations imply that there exists $\gamma\in\Gamma$ with $\|\hat\chi_\gamma\psi_n\|>0$ and
\[
\|(H_\rho-E)\hat\chi_\gamma\psi_n\|
\le
\delta_n\|\hat\chi_\gamma\psi_n\|,
\qquad
\delta_n
=
\sqrt{2}\left(\frac{C\|H\|_S}{n}+\varepsilon_n\right).
\]

Since $\|\hat\chi_\gamma\psi_n\|>0$, the support $\supp(\chi_\gamma)$ intersects $\supp(\psi_n)$. Recalling that
\[
\supp(\psi_n)\cap B(e,2n+R_H+R_0)=\emptyset 
\qquad\text{and}\qquad
\supp(\chi_\gamma)\subseteq B(\gamma,n),
\]
we obtain $B(\gamma,n+R_H)\cap T=\emptyset$. Hence, $\rho|_{B(\gamma,n+R_H)} \in W(\omega)$ follows from our assumptions. Thus, there exists $\gamma_n\in\Gamma$ with 
\[
(\gamma_n\omega)|_{B(\gamma,n+R_H)}
=
\rho|_{B(\gamma,n+R_H)}.
\]
By the choice of $R_H$ and $\supp(\hat\chi_\gamma\psi_n)\subseteq B(\gamma,n)$, we conclude
\[
\|(H_{\gamma_n\omega}-E)\hat\chi_\gamma\psi_n\|
=
\|(H_\rho-E)\hat\chi_\gamma\psi_n\|
\le
\delta_n\|\hat\chi_\gamma\psi_n\|.
\]
Thus, standard arguments \cite[Cor.~1.4.21]{DaFi22-book_1} and covariance of the operator family lead to
\[
\bigl\{ y\in\R \mid |E-y|<\delta_n \bigr\} \cap\sigma(H_\omega)
=
\bigl\{ y\in\R \mid |E-y|<\delta_n \bigr\} \cap\sigma(H_{\gamma_n\omega})
\neq\emptyset.
\]
Since $\delta_n\to 0$, we conclude $E\in\sigma(H_\omega)$.
\end{proof}

\begin{lemma}
\label{Lem:EssSpectr_InclInRho}
Let $H$ be a pattern equivariant Schr\"odinger operator of finite range. Let $\Omega\in\Jj$ be a minimal subshift. Suppose that $\rho\in\Aa^\Gamma$ satisfies: there exists a finite set $T\subseteq\Gamma$ such that for all $x\in\Gamma$ and $r\ge1$ with $B(x,r)\cap T=\emptyset$ one has $\rho|_{B(x,r)}\in W(\Omega)$. Then
\[
\sigma_{\mathrm{ess}}(H_\omega)\subseteq \sigma_{\mathrm{ess}}(H_\rho), \qquad \text{for all }\omega\in\Omega.
\]
\end{lemma}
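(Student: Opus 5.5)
We use a Weyl sequence for $H_\omega$ and transplant it, via minimality, into $H_\rho$ far away from the defect set $T$.

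Fix $\omega\in\Omega$ and $E\in\sigma_{\mathrm{ess}}(H_\omega)$. By minimality and \eqref{Eq:essSpectrum_Min}, $\sigma_{\mathrm{ess}}(H_\omega)=\sigma(H_\Omega)=\sigma(H_\omega)$. Let $R_H$ be the locality radius of the pattern equivariant operator $H$, as in the proof of Lemma~\ref{Lem:EssSpectr_InclInOmega(S)}.

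\emph{Step 1: finitely supported approximate eigenvectors.} Since $E\in\sigma(H_\omega)$ and $H_\omega$ is bounded with finite range, for each $n$ there is a finitely supported $\phi_n\neq 0$ with $\|(H_\omega-E)\phi_n\|\le \frac1n\|\phi_n\|$. To get one, take an approximate eigenvector and truncate it. Choose $r_n$ with $\supp(\phi_n)\subseteq B(e,r_n)$. By covariance we may work with $\gamma\omega$ in place of $\omega$ freely.

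\emph{Step 2: copy $\phi_n$ into $\rho$ far from $T$.} Consider the patch $P_n:=\omega|_{B(e,r_n+R_H)}\in W(\omega)\subseteq W(\Omega)$. Minimality of $\Omega$ makes $\omega$ (indeed every element of $\Omega$) uniformly recurrent, and $W(\omega)=W(\Omega)$. By hypothesis, $\rho$ restricted to any ball disjoint from $T$ lies in $W(\Omega)$. We therefore choose a ball $B(x,L)$ with $L$ much larger than the return radius for $P_n$ and with $B(x,L)\cap T=\emptyset$. We additionally require $B(x,L)$ to lie outside $B(e,m)$ for a prescribed $m$, which is possible since $T$ is finite and $\Gamma$ is infinite. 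Then $\rho|_{B(x,L)}$ is a patch of $\Omega$, and hence of $\omega$. By uniform recurrence, every $\Omega$-patch of radius $L$ contains a copy of $P_n$ once $L$ is large enough (depending only on $n$). Hence there is $\gamma_n$ with $B(\gamma_n,r_n+R_H)\subseteq B(x,L)$ and $(\gamma_n^{-1}\rho)|_{B(e,r_n+R_H)}=P_n$.

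\emph{Step 3: Weyl sequence for $H_\rho$.} Set $\psi_n:=U_{\gamma_n}\phi_n$. By the choice of $R_H$, covariance gives
\[
\|(H_\rho-E)\psi_n\|=\|(H_{\gamma_n^{-1}\rho}-E)\phi_n\|=\|(H_\omega-E)\phi_n\|\le \tfrac1n\|\psi_n\|.
\]
Choosing $m=m_n$ increasing so that $\supp(\psi_n)$ lies outside $B(e,m_n)$ and is disjoint from the supports of $\psi_1,\dots,\psi_{n-1}$, the normalized $\psi_n$ are orthonormal. They therefore converge weakly to $0$, and Weyl's criterion yields $E\in\sigma_{\mathrm{ess}}(H_\rho)$.

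\emph{Main obstacle.} The delicate point is Step 2: the hypothesis only controls balls avoiding $T$, so we need uniform recurrence to locate $P_n$ inside a single such ball. That requires $W(\rho|_{B(x,L)})\subseteq W(\Omega)$ together with the fact that in a minimal subshift each patch appears with bounded gaps in every element. We will state this explicitly, noting that closedness and invariance of $\Omega$ give $W(\Omega)=W(\omega)$. Pushing the balls off to infinity handles both the disjointness from $T$ and the weak convergence to zero.
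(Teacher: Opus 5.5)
Your proposal is correct and follows essentially the same route as the paper. Both arguments take finitely supported approximate eigenvectors of $H_\omega$, then use minimality (a uniform return radius for the patch $\omega|_{B(\cdot,r_n+R_H)}$) together with the hypothesis on balls avoiding $T$ to locate a copy of that patch in $\rho$ at some $\gamma_n$ escaping to infinity, and finally transport the vectors by $U_{\gamma_n}$ via covariance and locality. The only cosmetic differences are these: you start from $E\in\sigma(H_\omega)$, which suffices because $\sigma_{\mathrm{ess}}\subseteq\sigma$ trivially, so your appeal to \eqref{Eq:essSpectrum_Min} is unnecessary; and you enforce orthogonality through disjoint supports, whereas the paper starts from a weakly null Weyl sequence and simply lets the supports escape to infinity.
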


\begin{proof}
Since $H$ is a pattern equivariant Schr\"odinger operator of finite range, there is an $R_H>0$ such that for all $\xi,\zeta\in\Aa^\Gamma$ and $\psi\in\ell^2(\Gamma)$,
\[
\xi|_{B(\gamma,R_H)}=\zeta|_{B(\gamma,R_H)}
\ \Rightarrow\
(H_\xi\psi)(\gamma)=(H_\zeta\psi)(\gamma).
\]

Fix $E\in\sigma_{\mathrm{ess}}(H_\omega)$. By Weyl's criterion there exist $\psi_n\in C_c(\Gamma)$ with $\|\psi_n\|=1$, $\|(H_\omega-E)\psi_n\|\to0$ and $(\psi_n)$ tends weakly to zero. Choose $x_n\in\Gamma$ and $r_n\ge0$ such that $\supp(\psi_n)\subseteq B(x_n,r_n)$.

We further note that minimality of $\Omega$ implies that for every patch $P\in W(\Omega)$, there is a $R_P>0$ such that every $Q\in W(\Omega)_{B(e,R_P)}$ satisfies $P\prec Q$. Since every patch $P$ of $\rho$ supported on a ball disjoint from $T$ satisfies $P\in W(\Omega)$, there exists $\gamma_n\in \Gamma \setminus B\bigl(e,\,n+r_n+d(e,x_n)+R_H\bigr)$ such that
\[
(\gamma_n^{-1}\rho)|_{B(x_n,r_n+R_H)}
=
\omega|_{B(x_n,r_n+R_H)}.
\]
Since $\supp(\psi_n)\subseteq B(x_n,r_n)$, the finite range property yields $H_{\gamma_n^{-1}\rho}\psi_n=H_\omega\psi_n$.
By covariance,
\[
\|(H_\rho-E)U_{\gamma_n}\psi_n\|
%=\|U_{\gamma_n}(H_{\gamma_n^{-1}\rho}-E)\psi_n\|
=
\|(H_{\gamma_n^{-1}\rho}-E)\psi_n\|
=\|(H_\omega-E)\psi_n\|\to0,
\]
where $U_\eta$ denotes the unitary operator on $\ell^2(\Gamma)$ given by $(U_\eta\psi)(\gamma):=\psi(\eta^{-1}\gamma)$.
Moreover,
\[
\supp(U_{\gamma_n}\psi_n)=\gamma_n\supp(\psi_n)\subseteq \gamma_nB(x_n,r_n),
\]
and since $\gamma_n\notin B(e,n+r_n+d(e,x_n)+R_H)$, the supports escape to infinity. In particular, $(U_{\gamma_n}\psi_n)$ tends weakly to zero. Thus, $E\in\sigma_{\mathrm{ess}}(H_\rho)$.
\end{proof}

\begin{proof}[Proof of Theorem~\ref{Thm:essspec-unchanged}]
The inclusion $\subseteq$ follows from Lemma~\ref{Lem:EssSpectr_InclInRho}. Since $\Omega$ is minimal, we have $\sigma(H_\Omega)=\sigma_{\mathrm{ess}}(H_\omega)$ for all $\omega\in\Omega$ by \eqref{Eq:essSpectrum_Min}. Thus, the converse inclusion follows from Lemma~\ref{Lem:EssSpectr_InclInOmega(S)}.
\end{proof}

\begin{remark}
\label{Rem-EssSpec-OneDim}
For one-dimensional models with $\Gamma=\Z$, Theorem~\ref{Thm:essspec-unchanged} implies that all limit points of a primitive substitution share the same essential spectrum: Primitive substitutions over an alphabet with at least two letters are expanding, and the iterates $S^n(a)$ are $S$-legal by construction. Thus, non-$S$-legal patches can only occur at boundaries. Consequently, every limit point $\Omega_j$ of $\bigl(\overline{\Orb(S^n(\omega_0))}\bigr)_n$ satisfies the assumptions of Theorem~\ref{Thm:essspec-unchanged} with $T=\{-1,0\}$.

Hence, for primitive one-dimensional substitutions, the essential spectrum of the Schr\"odinger operator associated with any limit point coincides with the essential spectrum associated with $\Omega(S)$.
\end{remark}

In contrast to Theorem~\ref{Thm:essspec-unchanged}, we show that the essential spectrum may change in the presence of an infinite defect. Combined with Section~\ref{Subsec:TableTiling}, this demonstrates that for substitutions on $\Z^d$ with $d>1$, new spectral phenomena can occur: the essential spectrum of the limit subshift may differ from that of $\Omega(S)$. 

\begin{proposition}
\label{Prop:ChangeEssSpectrum}
Let $\Omega\in\Jj$. Then for every Schr\"odinger operator with finite range 
$(H_\omega)_{\omega\in\Aa^\Gamma}$, there exists a Schr\"odinger operator with finite range 
$(\tilde H_\omega)_{\omega\in\Aa^\Gamma}$ such that $\tilde H_\omega = H_\omega$ for all $\omega\in\Omega$ and the following holds:
for each finite set $T\subseteq\Gamma$ and each configuration $\rho\in\Aa^\Gamma$ with an infinite defect, i.e.
\[
\rho|_{\gamma T}\notin W(\Omega)
\qquad\text{for infinitely many } \gamma\in\Gamma,
\]
we have
\[
%\tilde H_\omega = H_\omega \quad \text{for all }\omega\in\Omega,
%\qquad\text{and}\qquad
\sigma_{\mathrm{ess}}(\tilde H_\omega)\neq \sigma_{\mathrm{ess}}(\tilde H_\rho)
\quad \text{for all }\omega\in\Omega.
\]
\end{proposition}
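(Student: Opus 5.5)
The plan is to build $\tilde H$ by adding to $H$ a nonnegative continuous potential that vanishes exactly on $\Omega$, then to reduce the essential spectrum of $\tilde H_\rho$ to spectra of limits of translates of $\rho$ at its defects. The quantifier order matters: $\tilde H$ must be fixed before $T$ and $\rho$ are given.

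\textbf{Construction.} For $k\in\N$ let
\[
U_k:=\{\xi\in\Aa^\Gamma \mid \xi|_{B(e,k)}\notin W(\Omega)\}.
\]
Each $U_k$ is clopen, $U_k\subseteq U_{k+1}$, and $\bigcup_k U_k=\Aa^\Gamma\setminus\Omega$ because $\Omega$ is a subshift. Put
\[
V(\xi):=\lambda\sum_{k\ge1}2^{-k}\mathbf 1_{U_k}(\xi),\qquad \tilde t_e:=t_e+V,\qquad \tilde t_\eta:=t_\eta \ (\eta\neq e),
\]
with a constant $\lambda>0$ to be fixed. Then $V$ is continuous and real-valued, $V\ge0$, and $V(\xi)=0$ if and only if $\xi\in\Omega$. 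Hence $\tilde H$ is again a Schr\"odinger operator of finite range and $\tilde H_\omega=H_\omega$ for all $\omega\in\Omega$. By \eqref{Eq:essSpectrum_Min}, applicable when $\Omega$ is minimal (otherwise one compares with $\sigma(H_\Omega)$ directly), it then suffices to show $\sigma_{\mathrm{ess}}(\tilde H_\rho)\not\subseteq\sigma(H_\Omega)$.

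\textbf{Reduction to a limit configuration.} Let $\gamma_n\to\infty$ be the defect positions, so $\rho|_{\gamma_nT}\notin W(\Omega)$. By compactness pass to a subsequence with $\gamma_n^{-1}\rho\to\xi$. The condition ``$\zeta|_T$ is a translate of a patch not in $W(\Omega)$'' is clopen, so $\xi|_T\notin W(\Omega)$, and in particular $\xi\notin\Omega$. The same argument as in Lemma~\ref{Lem:EssSpectr_InclInRho} gives $\sigma(\tilde H_\xi)\subseteq\sigma_{\mathrm{ess}}(\tilde H_\rho)$: transport Weyl sequences of $\tilde H_\xi$ along $\gamma_n$ using strong continuity, and they drift to infinity. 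It therefore remains to show
\[
\sigma(\tilde H_\xi)\not\subseteq\sigma(H_\Omega)\qquad\text{for every }\xi\notin\Omega.
\]
By the same argument this holds as soon as $\sigma(\tilde H_{\xi'})\not\subseteq\sigma(H_\Omega)$ for some $\xi'$ in the orbit closure of $\xi$. So one may first replace $\xi$ by a limit point in its orbit closure with the richest possible defect structure.

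\textbf{Main obstacle.} The difficulty is the final step: forcing a spectral value of $\tilde H_\xi$ outside $\sigma(H_\Omega)$ when continuity forces $V$ to be small near $\Omega$. I would try to push spectrum above $E_{\max}:=\max\sigma(H_\Omega)$ by comparing quadratic forms. Note that $\tilde H_\xi\ge H_\xi$, since $V\ge0$. I would then look for a test vector $\varphi$, built from an approximate ground state of $H_\xi$ near $E_{\max}$ localised around the defect at $e$, such that the gain $\langle\varphi,V(\cdot^{-1}\xi)\varphi\rangle$ from the defect exceeds the deficit $E_{\max}-\langle\varphi,H_\xi\varphi\rangle$. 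The constant $\lambda$ would be chosen depending only on $H$ and $\Omega$ to make this uniform. If no uniform choice of $\lambda$ works, the fallback is to replace the single potential by a potential $V$ built from a sequence of well-separated weights $c_k$ on $U_k\setminus U_{k-1}$. One then shows that, for each $\xi\notin\Omega$, the spectrum of $\tilde H_\xi$ moves strictly as the $c_k$ vary, whereas $\sigma(H_\Omega)$ does not. A simultaneous choice of the $c_k$ avoiding coincidence for all defect types is then obtained by a Baire-category or countability argument over the countably many $k$.
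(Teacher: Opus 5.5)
Everything up to your reduction is sound. $V$ is continuous and vanishes exactly on $\Omega$, so $\tilde H_\omega=H_\omega$ there. A limit $\xi$ of $\gamma_n^{-1}\rho$ satisfies $\xi|_T\notin W(\Omega)$ and $\sigma(\tilde H_\xi)\subseteq\sigma_{\mathrm{ess}}(\tilde H_\rho)$. Since $\sigma_{\mathrm{ess}}(\tilde H_\omega)=\sigma_{\mathrm{ess}}(H_\omega)\subseteq\sigma(H_\Omega)$, a single point of $\sigma_{\mathrm{ess}}(\tilde H_\rho)$ outside $\sigma(H_\Omega)$ would suffice.

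\textbf{The gap.} The step that carries the whole proof, namely $\sigma(\tilde H_\xi)\not\subseteq\sigma(H_\Omega)$ for every $\xi\notin\Omega$, is only announced, and for your $V$ it is false.

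Take $\Gamma=\Z^3$, $\Aa=\{0,1\}$, $\Omega$ the set of configurations with at most one $1$, and $H$ the free Laplacian, so $\sigma(H_\Omega)=[-6,6]$. Let $\xi_m$ have exactly two $1$'s, at $x_1,x_2$ with $d(x_1,x_2)=m$. Then $\gamma^{-1}\xi_m\in U_k$ iff $B(\gamma,k)$ contains both points. Hence $V(\gamma^{-1}\xi_m)\le 2\lambda\,2^{-\max(m/2,\,d(\gamma,x_1))}$, and $\sum_{\gamma}V(\gamma^{-1}\xi_m)=O(\lambda m^3 2^{-m/2})$.

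This potential is nonnegative and tends to $0$ at infinity, and the Green function of $\Z^3$ stays bounded at the band edge (transience). So the Birman--Schwinger bound gives $\sigma(\tilde H_{\xi_m})=[-6,6]$ for $m$ large, whatever $\lambda$ is. Moreover, $\xi_m$ is exactly what your reduction produces for $T=\{0,me_1\}$ and a $\rho$ carrying sparse such pairs.

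Consequences for your two strategies:
\begin{itemize}
\item No uniform $\lambda$ exists, so the quadratic-form heuristic cannot be completed. A defect detected only at scale $k$ receives a potential of size $O(2^{-k})$, and nothing forces the deficit $E_{\max}-\langle\varphi,H_\xi\varphi\rangle$ below that.
\item The fallback does not help either. That $\sigma(\tilde H_\xi)$ varies with the weights says nothing about leaving $\sigma(H_\Omega)$; in this example it does not move at all for small weights. A countability argument over $k$ also cannot control the uncountably many configurations $\xi$.
\end{itemize}

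\textbf{What the paper does.} It sidesteps all of this by letting the perturbation depend on $T$. It adds $2\lambda_0\bigl(1-\mathbf 1_{W(\Omega)}(\omega|_T)\bigr)$ to $t_e$; this vanishes on $\Omega$ but equals $2\lambda_0$ at \emph{every} $T$-defect, however mild. Set $s:=\sup_{\xi\in\Aa^\Gamma}\|H_\xi\|<\lambda_0$. Then $\langle\tilde H_\rho\delta_{\gamma_n},\delta_{\gamma_n}\rangle\ge 2\lambda_0-s>\lambda_0$ at the infinitely many defect positions $\gamma_n$. (The paper takes the supremum over $\Omega$ only, but a supremum over all of $\Aa^\Gamma$ is what the lower bound on $t_e(\gamma_n^{-1}\rho)$ needs, since $\gamma_n^{-1}\rho\notin\Omega$.)

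Suppose $\sigma_{\mathrm{ess}}(\tilde H_\rho)\cap(\lambda_0,\infty)$ were empty. Then the spectral projections of $\tilde H_\rho$ onto $[\lambda_0+\varepsilon,\infty)$ would have finite rank, and $\delta_{\gamma_n}\rightharpoonup 0$ would give $\limsup_n\langle\tilde H_\rho\delta_{\gamma_n},\delta_{\gamma_n}\rangle\le\lambda_0$, a contradiction. Meanwhile $\sigma_{\mathrm{ess}}(\tilde H_\omega)\subseteq(-\lambda_0,\lambda_0)$ for $\omega\in\Omega$. No limit configurations and no spectral analysis of defects are needed.

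\textbf{On the quantifier order.} Your remark is correct: read literally, the statement fixes $\tilde H$ before $T$, and the paper's proof does not deliver that. It proves the version with $\tilde H$ depending on $T$, which is all that is used later (with $T=\{-1,0\}^2$ for the table tiling). For the $T$-independent reading you would need a genuinely new idea ensuring that arbitrarily weakly detectable defects still create spectrum outside $\sigma(H_\Omega)$, and the proposal does not provide one.
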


\begin{proof}
Let $t_\eta$, $\eta\in B$, be the coefficients of $H$ and assume $e\in B$ (otherwise enlarge $B$ by adding $e$ with coefficient $0$). Define $\tilde H$ by $\tilde t_\eta=t_\eta$ for $\eta\neq e$ and
\[
\tilde t_e(\omega)
=
t_e(\omega)
+
2\lambda_0\Bigl(1-1_{W(\Omega)}(\omega|_T)\Bigr),
\qquad\text{ for some }\lambda_0>\sup_{\omega\in\Omega}\|H_\omega\|.
\]
If $\omega\in\Omega$, then $\omega|_T\in W(\Omega)$ and hence $\tilde t_\eta(\omega)=t_\eta(\omega)$ for all $\eta\in B$, so $\tilde H_\omega=H_\omega$.

By assumption there exist infinitely many distinct $\gamma_n\in\Gamma$ with $\rho|_{\gamma_n T}\notin W(\Omega)$. For such $\gamma_n$, 
\[
\langle \tilde H_\rho \delta_{\gamma_n},\delta_{\gamma_n}\rangle
= \langle H_\rho \delta_{\gamma_n},\delta_{\gamma_n}\rangle
	+ 2\lambda_0 \langle \delta_{\gamma_n},\delta_{\gamma_n}\rangle
> \lambda_0.
\]
Since the vectors $\delta_{\gamma_n}$ are pairwise orthogonal, it follows from \cite[Thm.~4.15]{Teschl14} that
\[
(\lambda_0,\infty)\cap\sigma_{\mathrm{ess}}(\tilde H_\rho)\neq\emptyset.
\]
On the other hand, for $\omega\in\Omega$ one has $\|\tilde H_\omega\|=\|H_\omega\|<\lambda_0$, hence $\sigma_{\mathrm{ess}}(\tilde H_\omega)\subseteq(-\lambda_0,\lambda_0)$.
Thus, $\sigma_{\mathrm{ess}}(\tilde H_\omega)\neq \sigma_{\mathrm{ess}}(\tilde H_\rho)$ follows for all $\omega\in\Omega$.
\end{proof}

\subsection{The Lebesgue measure of the spectrum}
\label{Subsec:LebesgueSpectr}

For the table tiling, we demonstrated above that the limiting subshift admits a pure line defect. Restricting to $\Gamma=\Z^d$, we now prove that a pure line defect may lead to a spectral pollution with positive Lebesgue measure.

In the following, $e_j\in\Z^d$ denotes the $j$-th standard basis vector for $j\in\{1,\ldots,d\}$.

\begin{theorem}
\label{Thm:ChangeLebesgueMeas}
Let $\Omega\in\Jj$. Then there exists a Schr\"odinger operator of finite range with $B=\{\pm e_j:1\le j\le d\}\cup\{0\}$ such that the following holds:
If for some finite set $T\subseteq\Z^d$ a configuration $\rho\in\Aa^{\Z^d}$ has a pure line defect, that is, there exists $1\le j\le d$ such that
\[
\rho|_{mT}\notin W(\Omega)
\quad\text{if and only if}\quad
m=te_j \text{ for some } t\in\Z,
\]
then
\[
\mathrm{Leb}\bigl(\sigma(H_\rho)\setminus\sigma(H_\omega)\bigr)>0, 
\qquad \text{ for all } \omega\in\Omega.
\]
\end{theorem}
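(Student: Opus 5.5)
The plan is to construct a single coefficient family that depends only on $\Omega$ and that detects illegal patches at every scale. We then show that along a pure line defect this family produces a one-dimensional waveguide whose spectral band lies outside $\sigma(H_\omega)$ for $\omega\in\Omega$. I have not closed the positivity step (Step 3 below); a workable route for it is sketched, but it is not verified.

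\textbf{Step 1 (the operator).} Take $t_{\pm e_i}\equiv 1$, so that off the defect the operator is the free Laplacian $\Delta$. For the diagonal, set
\[
f(\omega):=\sum_{k\ge1}2^{-k}\bigl(1-1_{W(\Omega)}(\omega|_{B(0,k)})\bigr),
\qquad
t_0(\omega):=\lambda\,h\bigl(f(\omega)\bigr).
\]
Here $\lambda>4d+2$ is large and $h:[0,1]\to[0,1]$ is continuous with $h(0)=0$. Each summand is locally constant and the series converges uniformly, so $f$ is continuous; it is not pattern equivariant, but the statement only requires finite range. For $\omega\in\Omega$ every ball patch lies in $W(\Omega)$, hence $f(\gamma^{-1}\omega)=0$ for all $\gamma$. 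This gives $H_\omega=\Delta$ and $\sigma(H_\omega)=[-2d,2d]$ for all $\omega\in\Omega$.

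\textbf{Step 2 (localisation to the line).} Let $\rho$ and $T$ be as in the statement, and fix $k_T$ with $T\subseteq B(0,k_T)$. For $m=te_j$ the patch $\rho|_{m+B(0,k_T)}$ is illegal, since it contains the illegal patch $\rho|_{m+T}$ and $W(\Omega)$ is closed under subpatches. Hence $f(-m+\rho)\ge 2^{-k_T}$ on the line.

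At a site $m$ at distance $r$ from the line, the balls $m+B(0,k)$ with $k\le r-k_T$ avoid every translate $te_j+T$. The definition of a pure line defect says only that the patches $\rho|_{m'T}$ are legal for $m'$ off the line; it does not directly say that every larger patch avoiding the line is legal. The first technical point is therefore to upgrade this, at least on a region that matters, to the statement that ball patches far from the line are legal. I would do this by invoking closedness of $W(\Omega)$ only for patches of diameter comparable to $T$, and accepting tails $f\le 2^{-(r-k_T)}$ off the line. The potential then decays exponentially away from the line.

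\textbf{Step 3 (positive measure).} This is the main obstacle. If $\lambda h(f)$ were constant, say $\lambda c$, along the line, one could use the Weyl vectors
\[
\psi(te_j+y)=e^{i\theta t}\phi(y),
\]
with $\phi$ a decaying transverse bound state of the one-dimensional (resp.\ $(d-1)$-dimensional) operator with a large point mass. For large $\lambda$ these produce the whole band
\[
\bigl[\lambda c-2+O(\lambda^{-1}),\ \lambda c+2+O(\lambda^{-1})\bigr]\subseteq\sigma(H_\rho),
\]
which lies outside $[-2d,2d]$ and has positive measure.

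Since $c$ depends on $T$ through $2^{-k_T}$, I would choose $h$ to saturate, for instance $h(s)=\min(1,s/\varepsilon)$ with $\varepsilon\to0$ replaced by a fixed continuous function that is $1$ on $(0,1]$ above every threshold $2^{-k}$. This is impossible for a single continuous $h$ with $h(0)=0$, so that choice does not work. The way I would attempt instead is the following:
\begin{itemize}
\item Pass to a limit $\rho'$ of $e_j$-translates of $\rho$ in $\overline{\Orb(\rho)}$, using $\sigma(H_{\rho'})\subseteq\sigma(H_\rho)$ (semicontinuity).
\item Show that the spectrum of the line part of $H_{\rho'}$, conjugated to a one-dimensional operator with hopping $1$ and potential in $[\lambda 2^{-k_T},\lambda]$ plus an $O(1)$ coupling to the bulk, has positive measure.
\end{itemize}
For the second point I would try to exploit the transverse directions: the band structure comes from the whole $(d-1)$-dimensional transverse spectrum, not only from the line. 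Concretely, I would prove positive measure via a lower bound on the density of states of $H_\rho$ in $(2d,\infty)$ restricted to the tube around the line, using that each site there contributes a resolvent pole spread by the hopping. If this fails, the honest fallback is to obtain only $\sigma(H_\rho)\setminus\sigma(H_\omega)\neq\emptyset$, which is the weaker, essential-spectrum-type conclusion already established in Proposition~\ref{Prop:ChangeEssSpectrum}.
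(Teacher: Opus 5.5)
Your proposal does not reach the conclusion, and the gap is larger than the positivity step you flag: Step~2 already fails. The pure-line-defect hypothesis is an ``if and only if'' statement about translates of $T$ only, and it says nothing about larger patches. A ball patch $\rho|_{m+B(0,k)}$ can therefore be illegal at sites $m$ arbitrarily far from $\Z e_j$. So your multiscale $f$ need not decay away from the line, and the tail bound $f\le 2^{-(r-k_T)}$ that you propose to ``accept'' is unjustified. Nor is $f$ constant along the line, since the smallest illegal radius can vary from site to site. With a $T$-independent $f$ you do not get a line perturbation of the free operator at all, and Step~3 has nothing to work with. The trouble comes from insisting on an operator that does not depend on $T$.

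The missing idea is to read the potential directly off $T$-patches. Take $t_{\pm e_i}\equiv 1$ and
\[
t_0(\omega)=\lambda_0\,1_{\Aa^T\setminus W(\Omega)}(\omega|_T),\qquad \lambda_0>2d+2,
\]
which is locally constant. For $\omega\in\Omega$ every $T$-patch lies in $W(\Omega)$, so $H_\omega=\Delta_d$ (the adjacency operator on $\ell^2(\Z^d)$) and $\sigma(H_\omega)=[-2d,2d]$. For $\rho$, precisely because the hypothesis is an ``if and only if'' on $T$-patches, the potential is \emph{exactly} $\lambda_0 1_{\Z e_j}$: constant on the line, zero off it, with no tails.

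Splitting coordinates along and transverse to $e_j$, $H_\rho$ is unitarily equivalent to $\Delta_1\otimes\mathrm{Id}+\mathrm{Id}\otimes(\Delta_{d-1}+\lambda_0 1_{\{0\}})$. Hence $\sigma(H_\rho)=[-2,2]+\sigma(\Delta_{d-1}+\lambda_0 1_{\{0\}})$ by \cite[Thm.~VIII.33]{ReedSimonI}. Testing with $\delta_0$, and using that the transverse operator has essential spectrum $[-2(d-1),2(d-1)]$, gives an eigenvalue $\theta\ge\lambda_0>2d+2$. Thus $[\theta-2,\theta+2]\subseteq\sigma(H_\rho)\setminus[-2d,2d]$. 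This is your constant-$c$ computation from Step~3, so you had the spectral half; what you lacked was the exact indicator.

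This is the paper's proof. Note that there the operator does depend on $T$, despite the quantifier order in the statement; this is consistent with its use in Corollary~\ref{Cor:TableTiling_Summary}, where $T=\{-1,0\}^2$ is fixed. The $T$-independent version you aim for is stronger than what the paper proves, and your own construction shows the obstruction. With $h$ continuous and $h(0)=0$, your line potential becomes arbitrarily weak when the illegality of $\rho$ is visible only on large scales. For $d-1\ge 3$, a weak line potential need not create any bound state above $2(d-1)$.
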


For the proof of the theorem, we use some ideas of \cite[Sec.~6]{DaGo11}. We need some preparation on tensor products of operators, see \cite[Sections~II.4 and~VIII.10]{ReedSimonI} and \cite[Sec.~4.6]{Teschl14}. 
Let ${H}^a: \ell^2(\Z^a) \to \ell^2(\Z^a)$ and $H^b: \ell^2(\Z^b) \to \ell^2(\Z^b)$, $a,b \in \N$ be bounded self-adjoint operators. There is some unitary map $\mathcal{U}: \ell^2(\Z^a) \otimes \ell^2(\Z^b) \to \ell^2(\Z^{a+b})$ which is defined on elementary tensor elements by
\[
\mathcal{U}\big( \psi_a \otimes \psi_b \big)(n_a, n_b) \, = \, \psi_a(n_a) \psi_b(n_b), \quad n_a \in \Z^a, \quad n_b \in \Z^b.
\]
Via $\mathcal{U}$, we define a bounded self-adjoint operator $H: \ell^2(\Z^{a+b}) \to \ell^2(\Z^{a+b})$ by
\begin{align} \label{eqn:tensorop}
H = \mathcal{U} \left( H^a \otimes \mathrm{Id}_b \, + \, \mathrm{Id}_a \otimes H^b  \right) \mathcal{U}^{*},
\end{align}
where $\mathrm{Id}_a$ and $\mathrm{Id}_b$ are the identity operators on $\ell^2(\Z^a)$ and $\ell^2(\Z^b)$, respectively.  
 Recall that the tensor product of two bounded self-adjoint operators $A:\ell^2(\Z^a) \to \ell^2(\Z^a)$ and $B:\ell^2(\Z^b) \to \ell^2(\Z^b)$
 is uniquely defined by the equality
\[
\big(A \otimes B \big)(\psi_a \otimes \psi_b) = A \psi_a \otimes B \psi_b, \quad \psi_a \in \ell^2(\Z^a), \quad \psi_b \in \ell^2(\Z^b).
\]
see \cite[Section~VIII.10]{ReedSimonI}. 

\begin{proof}[Proof of Theorem~\ref{Thm:ChangeLebesgueMeas}] 
It is no loss of generality in assuming $j=1$.
We define the Schrödinger operator of finite range $H=(H_\xi)_{\xi\in\Aa^\Gamma}$ by the coefficients $t_\gamma\equiv 1$ for $\gamma\in\{\pm e_j:1\le j\le d\}$ and 
\[
t_0:\mathcal{A}^{\Z^d} \to \R,\qquad
t_0(\omega) = \lambda_0 1_{\Aa^T\setminus W(\Omega)}(\omega|_T), 
\]
where $1_{\Aa^T\setminus W(\Omega)}$ is the characteristic function of the set $\Aa^T\setminus W(\Omega)$. We will chose $\lambda_0 > 0$ later in dependence of $d$. 
Using the assumption on $\rho$ to have a pure line defect, we obtain 
\begin{align*}
	t_0(\gamma^{-1}\rho) \psi (\gamma) = 
	\begin{cases}
		\lambda_0 \psi(\gamma), & \gamma \in \Z e_1 \\
		0, & \mathrm{else}.
	\end{cases}
\end{align*}
With this at hand, it is not hard to see that  $H_{\rho}$ is equivalent to the tensor product $ H_{\rho}^1 \otimes H_{\rho}^2$, 
where
\begin{align*}
H^{1}_{\rho}: \ell^2(\Z) \to \ell^2(\Z), \quad H^{1}_{\rho} \psi_1(t)  &= \psi_1(t+1) + \psi_1(t-1), \\
 H^{2}_{\rho}: \ell^2(\Z^{d-1}) \to \ell^2(\Z^{d-1}), \quad H^{2}_{\rho} \psi_{d-1}(m)  &= \sum_{l=1}^{d-1} \Big( \psi_{d-1}(m+f_{l}) + \psi_{d-1}(m-f_{l}) \Big)  + \lambda \delta_0(m) \psi_{d-1}(m),
%\begin{cases}
%		\varphi(n+1) + \varphi(n-1), & i = j, \\
%	\varphi(n+1) + \varphi(n-1) + \lambda_0 \delta_0(n) \varphi(n), & i \neq j.
%\end{cases}
\end{align*}
and the $f_{l}$ are the standard unit vectors in $\Z^{d-1}$. 
By \cite[Theorem~VIII.33]{ReedSimonI}, we obtain that 
\[
\sigma(H_{\rho}) = \sigma(H_{\rho}^1) + \sigma(H_{\rho}^2).
\]
It is well-known that $\sigma(H_{\rho}^1) = [-2,2]$, and it is easy to see that $\lambda_0$ can be chosen large enough such that $\sigma(H_{\rho}^2)$ contains an element $\theta > 2d+2$. Moreover, $H_{\omega}$ is clearly the unperturbed adjacency operator, and it is well-known that $\sigma(H_{\omega}) = [-2d,2d]$. Putting everything together, we arrive at
\[
	\sigma(H_{\rho}) \setminus \sigma(H_{\omega}) \supseteq  [\theta-2, \theta + 2] \setminus [-2d,2d] = [\theta-2, \theta+ 2].
	\hfill\qedhere
\]
\end{proof}

This result connects to \cite[Thm.~1.1]{Fil25}. It will be interesting to investigate this further.

\begin{corollary}
\label{Cor:TableTiling_Summary}
Let $S:\PatZ\to\PatZ$ be the table tiling substitution, with subshift $\Omega(S)$, and let $T=\{-1,0\}^2$. Let $\omega_a\in\Aa^{\Z^2}$ be the constant configuration $\omega_0(n)=a$ where $a\in\Aa$ is some fixed letter. Let $H=(H_\omega)_{\omega\in\Aa^{\Z^2}}$ be a finite-range Schr\"odinger operator. Then the following assertions hold.
\begin{enumerate}[(a)]
\item There exist subshifts $\Omega_0$ and $\Omega_1$ and $\rho_j\in\Omega_j, \, j \in\{0,1\}$, such that  for each $j\in\{0,1\}$
\[
\lim_{n\to\infty}\Theta_{2n+j}=\Omega_j = \overline{\Orb(\rho_j)}
\]
and $S^2(\rho_j)=\rho_j$ and $\rho_j$ has a pure line defect.

\item There exists a finite-range Schrödinger operator $\tilde H$ such that for each $j\in \{0,1\}$ and $\omega\in\Omega(S)$,
\[
\sigma_{\mathrm{ess}}(\tilde H_{\rho_j}) \neq \sigma_{\mathrm{ess}}(\tilde H_\omega) = \sigma_{\mathrm{ess}}(H_\omega).
\]
\item There exists a finite-range Schrödinger operator $\hat H$ such that for each $j\in \{0,1\}$ and $\omega\in\Omega(S)$,
\[
\mathrm{Leb}\bigl(\sigma(\hat H_{\rho_j})\setminus \sigma(\hat H_\omega)\bigr)>0.
\]
\end{enumerate}
\end{corollary}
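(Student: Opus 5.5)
The plan is to assemble the corollary from the general results, mirroring the proof of Corollary~\ref{Cor:ChairTiling_Summary}. Since the table tiling is primitive, $\Omega(S)$ is minimal \cite[Cor.~4.9]{BaakeGrimm13}, so by \eqref{Eq:essSpectrum_Min} spectrum and essential spectrum of $H_\omega$, $\omega\in\Omega(S)$, coincide and do not depend on $\omega$.

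For (a) I would first invoke Corollary~\ref{Cor:Table_Tbijective}, which shows that $S$ is $T$-bijective with $T=\{-1,0\}^2$. Then Theorem~\ref{Thm:SubshiftPartialLimit} yields convergence of $\Theta_n$ along arithmetic subsequences. By Lemma~\ref{Lem:EventualPeriodicityT}, the period is determined by the eventual periodicity of $W(S^n(\omega_a))_{2\times2}$. For $a=r$ the direct computation \eqref{Eq:DefectsTable} gives $N_0=4$ and $\ell_0=2$. For the other letters I would do the analogous finite computation of the $2\times 2$-patch sets, which are finitely many and checkable by hand or machine. Next, Proposition~\ref{Prop:T-bijective_FixedPoints} together with Proposition~\ref{Prop:Tperiodic_Inclusion} identifies each limit as $\Omega_j=\overline{\Orb(\rho_j)}$. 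Here $\rho_j=\lim_n S^{2n+j}(\omega_j)$ is the $S^2$-fixed point generated by an illegal central patch $Q_j$, as in the introduction. The pure line defect of $\rho_j$ is verified as in Section~\ref{Subsec:TableTiling} using $S^2(\rho_j)=\rho_j$:
\begin{itemize}
\item the $T$-patches along the strip $\Lambda=\Z e_2+T$ (or $\Z e_1+T$, depending on the letter $a$) are exactly the elements of $\Qq_j\setminus W(S)$;
\item every patch in the half-planes $\Upsilon_\pm$ lies in some $S^{2n}(R_l)$ with $R_l\in W(S)$, and is therefore legal.
\end{itemize}
This gives $\rho_j|_{mT}\notin W(S)$ if and only if $m\in\Z e_2$, which is precisely the pure line defect hypothesis of Theorem~\ref{Thm:ChangeLebesgueMeas}.

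For (b) I would apply Proposition~\ref{Prop:ChangeEssSpectrum} with $\Omega=\Omega(S)$ and the given $H$, obtaining $\tilde H$. By construction $\tilde H_\omega=H_\omega$ on $\Omega(S)$, which gives the equality $\sigma_{\mathrm{ess}}(\tilde H_\omega)=\sigma_{\mathrm{ess}}(H_\omega)$. A pure line defect is in particular an infinite defect, with infinitely many $\gamma=te_2$ such that $\rho_j|_{\gamma T}\notin W(S)$. Hence the proposition gives the inequality for both $j$. For (c) I would take $\hat H$ to be the operator constructed in Theorem~\ref{Thm:ChangeLebesgueMeas} with $\Omega=\Omega(S)$, $d=2$, and apply it to $\rho_0$ and $\rho_1$.

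The main obstacle is (a), specifically the claim that the fixed points carry exactly a line defect and that this holds for every constant letter $a$. The general propositions reduce this to finite combinatorial checks, but these checks must be done for each $a\in\Aa$. For each letter I would compute the eventual two-periodic patch sets $\Qq_j$ and confirm that the illegal central patches reproduce under $S^2$. The half-plane legality argument should transfer verbatim once suitable legal $2\times1$ seeds $R_l$ are identified.
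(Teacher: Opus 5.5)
Your proposal is correct and follows the paper's proof essentially step by step. For (a) you use primitivity and minimality, $T$-bijectivity via Corollary~\ref{Cor:Table_Tbijective}, then Theorem~\ref{Thm:SubshiftPartialLimit} and Propositions~\ref{Prop:T-bijective_FixedPoints} and~\ref{Prop:Tperiodic_Inclusion}, together with the explicit computations of Section~\ref{Subsec:TableTiling}; for (b) and (c) you apply Proposition~\ref{Prop:ChangeEssSpectrum} and Theorem~\ref{Thm:ChangeLebesgueMeas} with $\Omega=\Omega(S)$.

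One cosmetic indexing slip is inherited from the introduction. Since $\Psi(Q_0)=Q_1$ and $\Psi(Q_1)=Q_0$, the formula $\rho_j=\lim_n S^{2n+j}(\omega_j)$ with $\omega_j|_T=Q_j$ yields $\rho_{Q_0}$ for both $j$. You should instead take $\rho_j:=\rho_{Q_j}=\lim_n S^{2n}(\omega_j)$.
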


\begin{proof}
Since the table tiling substitution is primitive, the subshift $\Omega(S)$ is minimal \cite[Ch.~4]{BaakeGrimm13}. Consequently, both the spectrum and the essential spectrum of $H_\omega$ are independent of $\omega\in\Omega(S)$, see \eqref{Eq:essSpectrum_Min}.

Since $S_0$ is componentwise bijective, the table tiling substitution is $T$-bijective by Corollary~\ref{Cor:Table_Tbijective} for $T=\{-1,0\}^2$.
Therefore, (a) follows from Theorem~\ref{Thm:SubshiftPartialLimit}, Proposition~\ref{Prop:Tperiodic_Inclusion}, and Proposition~\ref{Prop:T-bijective_FixedPoints} combined with the considerations in Section~\ref{Subsec:TableTiling}. One representative case is treated there for $a=\letterbox{r}$, and the remaining cases are analogous. Moreover, similar computations show that for each $a\in\Aa$ the configuration $\rho_j$ admits a pure line defect either parallel to the $x$-axis or along the $y$-axis.

Statement (b) follows from (a) together with Proposition~\ref{Prop:ChangeEssSpectrum}. Finally, (c) follows from (a) and Theorem~\ref{Thm:ChangeLebesgueMeas}, using that $\rho_j$ exhibits a pure line defect.
\end{proof}

%%%%%%%%%%%%%%%%%%%%%%%%%%%%%%%%%%%%%%%%%%%%%%%%%%%%%%%%%%%%%%%%%%%%%%%%%%%%%%%%
\section{Structural defects for substitutions}
\label{Sec:SubshiftLimit}
%%%%%%%%%%%%%%%%%%%%%%%%%%%%%%%%%%%%%%%%%%%%%%%%%%%%%%%%%%%%%%%%%%%%%%%%%%%%%%%%

We collect basic terminologies on substitutions needed for the questions raised in
Section~\ref{Sec:Intro} and study the limit points of iterative subshifts. 

For readability, the reader may keep block substitutions on
$\Gamma=\Z^d$ in mind. The present notation is tailored to the more general framework of
\cite{BHP21-Symbolic} and covers further examples beyond block substitutions (e.g. particular digit substitutions) and even beyond the abelian setting.

A \emph{substitution rule} on a finite alphabet $\Aa$ with support $F\subseteq\Gamma$
is a map
\[
S_0:\Aa\to\Aa^F.
\]
We restrict to substitution rules that extend to patches by letterwise application
compatible with the group structure of $\Gamma$. Let $e\in\Gamma$ denote the neutral element.

\begin{definition}
\label{Def:SubstitutionMap}
A map $S:\Pat\to\Pat$ is called a \emph{substitution map} with respect to $S_0$ if:
\begin{enumerate}[(a)]
\item $S(P_a)=S_0(a)$ for all $a\in\Aa$, where $P_a:\{e\}\to\Aa$ is the single-letter patch
given by $P_a(e)=a$,
\item $S(\gamma P)=D(\gamma)\,S(P)$ for all $\gamma\in\Gamma$ and $P\in\Pat$, where
$D:\Gamma\to\Gamma$ is a group homomorphism such that 
\begin{equation}
\label{Eq:FundCell_D-F}
\Gamma=\bigsqcup_{\gamma\in\Gamma} D(\gamma)F,
\end{equation}
\item for all $P\in\Pat$ and all finite $M\subseteq\supp(P)$, the support $F(n,M)\subseteq\Gamma$ of $S^n\bigl(P|_M\bigr)$ satisfies
\[
\bigl(S^n(P)\bigr)\big|_{F(n,M)} = S^n\bigl(P|_M\bigr),
\qquad n\in\N_0.
\]
\end{enumerate}
\end{definition}

For the two examples from Section~\ref{Sec:Intro} with $\Gamma=\Z^2$, we take
$F:=\{0,1\}^2$ and $D:\Z^2\to\Z^2$ given by $D(x,y)=(2x,2y)$. In fact every block substitution admits such a unique substitution map \cite[Prop.~3.1]{BaBePoTe24} as well as particular digit substitutions \cite{Vin00,FraMan22}. We refer to \cite{BHP21-Symbolic,BeckusHabil} for further examples beyond the
abelian setting such as the Heisenberg group.

\begin{proposition}
\label{Prop:Exist_SubstMap}
Let $S_0:\Aa\to\Aa^F$ be a substitution rule with $F\subseteq \Gamma$ finite. If there is a group homomorphism $D:\Gamma\to\Gamma$ satisfying \eqref{Eq:FundCell_D-F}, then there exists a unique substitution  map $S:\Pat\to\Pat$ and 
\[
S:\Pat\to\Pat,
\qquad
S(P)(\gamma)
=
S_0\bigl(P(\eta_\gamma)\bigr)\bigl(D(\eta_\gamma)^{-1}\gamma\bigr),
\quad 
\gamma\in F\bigl(1,\supp(P)\bigr).
\]
Here $F\bigl(1,\supp(P)\bigr)=\bigsqcup_{\eta\in\supp(P)} D(\eta)F$ and $\eta_\gamma\in\Gamma$ is the unique element satisfying $D(\eta_\gamma)^{-1}\gamma\in F$. Moreover, the restriction $S:\Aa^\Gamma\to\Aa^\Gamma$ is continuous.
\end{proposition}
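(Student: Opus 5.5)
Existence comes from an explicit construction on finite patches; uniqueness comes from showing that conditions (a)--(c) of Definition~\ref{Def:SubstitutionMap} force that formula on every patch. Continuity is a locality check.

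\emph{Step 1 (well-definedness).} By \eqref{Eq:FundCell_D-F}, each $\gamma\in\Gamma$ lies in exactly one translate $D(\eta)F$, which is the same as saying $D(\eta)^{-1}\gamma\in F$. This determines $D(\eta)$ uniquely. To recover $\eta$ itself, and hence the letter $P(\eta_\gamma)$, I need $D$ to be injective. This follows from \eqref{Eq:FundCell_D-F} as well: if $D(\eta)=D(\eta')$ with $\eta\neq\eta'$, the union would not be disjoint, since $F\neq\emptyset$.

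So for $P\in\Pat$ I set
\[
F(1,\supp(P)):=\bigsqcup_{\eta\in\supp(P)}D(\eta)F
\]
and define $S(P)$ on this set by the displayed formula.

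\emph{Step 2 (axioms).} Condition (a) is immediate: for $P=P_a$ we get $\eta_\gamma=e$, $D(e)=e$, and $S(P_a)=S_0(a)$.

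For (b), take $\gamma'\in D(\gamma)\,F(1,\supp P)$. Then $\eta_{\gamma'}$ with respect to $\gamma P$ equals $\gamma\eta_{D(\gamma)^{-1}\gamma'}$, because $D$ is a homomorphism. Also $(\gamma P)(\gamma\eta)=P(\eta)$. Substituting these into the formula gives $S(\gamma P)=D(\gamma)S(P)$.

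For (c), the definition is letterwise and local. Hence $S(P)|_{F(1,M)}=S(P|_M)$ for every $M\subseteq\supp(P)$. Induction on $n$ then gives the claim, using $F(n+1,M)=F(1,F(n,M))$ and $S^{n+1}=S\circ S^n$.

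\emph{Step 3 (uniqueness).} Let $S'$ be any substitution map with respect to $S_0$. Fix $P$ and $\eta\in\supp(P)$, and write $P|_{\{\eta\}}=\eta P_{P(\eta)}$. Conditions (a) and (b) give
\[
S'(P|_{\{\eta\}})=D(\eta)\,S_0(P(\eta)).
\]
Condition (c) with $n=1$ and $M=\{\eta\}$ gives $S'(P)|_{D(\eta)F}=S'(P|_{\{\eta\}})$.

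To conclude $S'(P)=S(P)$ I still need that $\supp S'(P)=F(1,\supp P)$. I would argue this as follows. Condition (c) already forces $\supp S'(P)\supseteq\bigcup_\eta D(\eta)F$. For the reverse inclusion I would read the definition as intending the support of $S^n(P)$ to be $F(n,\supp P)$, consistent with the statement of the proposition. If this does not follow formally from (c), I would add it as the normalization implicit in Definition~\ref{Def:SubstitutionMap}. I expect this support bookkeeping to be the only delicate point of the proof.

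\emph{Step 4 (continuity).} For $\omega\in\Aa^\Gamma$ the formula gives $S(\omega)\in\Aa^\Gamma$, since the translates cover $\Gamma$. The value $S(\omega)(\gamma)$ depends only on the single coordinate $\omega(\eta_\gamma)$. Consequently each coordinate of $S(\omega)$ is a continuous function of $\omega$, and this is exactly continuity in the product topology.
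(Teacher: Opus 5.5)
\textbf{Steps 1, 2 and 4.} These are correct. They are the letterwise construction and verification that the paper does not write out but delegates to \cite{BHP21-Symbolic} (Prop.~3.2 and 3.7). For existence and continuity there is nothing further to compare.

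\textbf{Step 3.} The point you flag is not mere bookkeeping. The inclusion $\supp S'(P)\subseteq F(1,\supp P)$ does \emph{not} follow from (a)--(c) of Definition~\ref{Def:SubstitutionMap}. So uniqueness as literally stated is false, and your added normalization is necessary, not optional.

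For a counterexample, fix $c\in\Aa$ and define $S'$ as follows.
\begin{itemize}
\item If $\supp(P)$ is finite or equal to $\Gamma$, set $S'(P):=S(P)$.
\item If $\supp(P)$ is infinite and different from $\Gamma$, let $S'(P)$ agree with $S(P)$ on $F(1,\supp P)$ and equal $c$ on $\Gamma\setminus F(1,\supp P)$. This complement is nonempty.
\end{itemize}
Then $S'$ satisfies all three conditions:
\begin{itemize}
\item Condition (a) only involves single letters.
\item Condition (b) holds because $F(1,\gamma\supp P)=D(\gamma)F(1,\supp P)$ and $\Gamma\setminus D(\gamma)X=D(\gamma)(\Gamma\setminus X)$.
\item Condition (c) only involves finite $M$, where $S'=S$, so $S'^n(P|_M)=S^n(P|_M)$ has support $F(n,M)$. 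For every patch $Q$ and finite $M'\subseteq\supp Q$ we have $S'(Q)|_{F(1,M')}=S(Q)|_{F(1,M')}=S(Q|_{M'})$. Your induction from Step~2, applied to $Q=S'^n(P)$ and $M'=F(n,M)$, then gives $S'^n(P)|_{F(n,M)}=S^n(P|_M)$.
\end{itemize}
Yet $S'(P)\neq S(P)$ for, e.g., any patch supported on a half-space of $\Z^d$.

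Uniqueness can fail even for finite patches. On $\Gamma=\Z$, suppose $a\mapsto S_0(a)(x)$ is constant, equal to $c$, for some $x\in F$. Add the letter $c$ at the positions $D(\eta)+x$ for all $\eta$ in the gaps of the convex hull of a finite support. Conditions (a)--(c) still hold, by the same kind of check.

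\textbf{Consequence.} Uniqueness holds only within the class of maps with $\supp S(P)=F(1,\supp P)$. Given (a)--(c), this is equivalent to $S(P)$ being the union of the patches $S(P|_{\{\eta\}})$, $\eta\in\supp P$. It is exactly what the displayed formula in the proposition tacitly asserts. With this normalization built into the definition, your Step~3 is a complete uniqueness proof.

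\textbf{The choice of $D$.} Uniqueness is also only relative to the given $D$. On $\Z$ with $F=\{0,1\}$, both $x\mapsto 2x$ and $x\mapsto -2x$ satisfy \eqref{Eq:FundCell_D-F}, and they yield different substitution maps for the same $S_0$. Your Step~3 tacitly, and correctly, uses the same $D$ for $S'$ as for $S$; you should say so explicitly.
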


\begin{proof}
This is a straightforward modification of \cite[Proposition~3.2 and 3.7]{BHP21-Symbolic}.
\end{proof}

\begin{definition}[Legal patches and subshift]
\label{Def:LegalPatches}
A finite patch $P$ is \emph{$S$-legal} if $P\prec S^n(a)$ for some $n\in\N$ and
$a\in\Aa$. Let $W(S)$ denote the set of all $S$-legal finite patches.
The associated subshift is
\[
\Omega(S):=\{\omega\in\Aa^\Gamma \mid W(\omega)\subseteq W(S)\}.
\]
\end{definition}

The subset $\Omega(S)\subseteq\Aa^\Gamma$ is $\Gamma$-invariant and compact. 
We require $\Omega(S)$ to be non-empty so that it defines a subshift.
Typically, this is ensured by assuming that the support $F$ of $S_0$ expands in all directions. More precisely, for every radius $r$ there exists $n\in\N$ such that the support $F(n,\{e\})$ of $S^n$ applied to a single letter contains a ball of radius $r$.
In the examples considered in Section~\ref{Sec:Intro}, this property is immediate, and we assume $\Omega(S)\neq\emptyset$ without further discussion of the rather subtle conditions. We refer to \cite[Prop.~3.14 and 4.3]{BHP21-Symbolic} for sufficient criteria.

To analyze possible limit points of the iterated subshifts $\bigl(\overline{\Orb(S^n(\omega_0))}\bigr)_n$ in the space $\Jj$,
we introduce a restricted notion of testing domains, originally
introduced in \cite{BaBePoTe24}.

\begin{definition}[Convenient testing domain]
\label{Def:TestingDomain}
A finite set $T\subseteq\Gamma$ is called a \emph{convenient testing domain} if the following
conditions hold: 
\begin{enumerate}[(a)]
\item \label{enu:TestingDomain_simplified}
For every $r>0$ there exists $n_r\in\N$ such that for all $n\ge n_r$ and all
$x\in\Gamma$ there exists $\gamma=\gamma(n,x)\in\Gamma$ with
\[
xB(e,r)\subseteq F(n,\gamma T).
\]
Moreover, we can choose $\gamma(n,e)=e$ for all $n\in\N$.
\item \label{enu:NT_simplified}
For every $x\in\Gamma$ there exists $\gamma=\gamma(x)\in\Gamma$ such that
\[
xT\subseteq F(1,\gamma T).
\]
Moreover, we can choose $\gamma(e)=e$.
\end{enumerate}
\end{definition}

Recalling that $F(n,T)$ is the support of $S^n(P)$ for $P\in\Aa^T$
(Definition~\ref{Def:SubstitutionMap}), these conditions suggest the following
intuition.
Condition~\eqref{enu:TestingDomain_simplified} ensures that all finite patches on
large balls are, up to translation, eventually inside iterates of
$T$-patches, while \eqref{enu:NT_simplified} guarantees that a single substitution
step already 
is enough to contain a translate of $T$.

The notion of a convenient testing domain corresponds to a testing tuple $(T,N_T)$ with $N_T=1$, as defined in \cite[Def.~2.11]{BaBePoTe24}. We restrict our considerations for simplicity. All results extend to general testing tuples at the expense of additional bookkeeping and by replacing $S$ with $S^{N_T}$. 

Many standard examples admit a convenient testing domain, including block substitutions on $\Z^d$ and certain substitutions on the Heisenberg group. In the table and chair tilings discussed in Section~\ref{Sec:Intro}, the set $T=\{-1,0\}^2$ is a convenient testing domain, which follows from straightforward computations using \cite[Prop.~3.2]{BaBePoTe24}. In particular, the role of the $2\times2$-patches arises from their identification with $T$-patches.

%%%%%%%%%%%%%%%%%%%%%%%%%%%%%%%%%%%%%%%%%%%%%%%%%%%%%%%%%%%%%%%%%%%%%%%%%%%%%%%%
\subsection{Limit points of the iterated subshift sequence}
\label{Subsec:SubshiftLimit}
%%%%%%%%%%%%%%%%%%%%%%%%%%%%%%%%%%%%%%%%%%%%%%%%%%%%%%%%%%%%%%%%%%%%%%%%%%%%%%%%
We continue studying limit points of \(\bigl(\Theta_n:=\overline{\Orb(S^n(\omega_0))}\bigr)_{n\in\N}\) in $\Jj$.
For a limit point $\Omega$, we call the patches in $W(\Omega)\setminus W(S)$ the \emph{structural defects} of $\Omega$. 

\begin{definition}[$\Qq$-legality]
\label{Def:Qlegal}
Let $\Qq$ be a nonempty set of finite patches and $\ell\in\N$. 
A patch $P$ is called \emph{$(\Qq,\ell)$-legal} if there exist $n\in\N_0$ and $Q\in\Qq$ such that $P\prec S^{n\ell}(Q)$.
The set of all $(\Qq,\ell)$-legal patches is denoted by $W(\Qq,\ell)$.
A configuration $\omega\in\Aa^\Gamma$ is called \emph{$(\Qq,\ell)$-legal} if $W(\omega)\subseteq W(\Qq,\ell)$. 
\end{definition}

If $\Qq=\{P_a\mid a\in\Aa\}$, then $(\Qq,1)$-legality coincides with the notion of
$S$-legality from Definition~\ref{Def:LegalPatches} and $W(\Qq,1)=W(S)$.

Using this concept, we show that the sequence $(\Theta_n)_{n\in\N}$ admits only finitely
many limit points $\Omega_0,\ldots,\Omega_{\ell_0-1}$, each of which is characterized by $(\Qq,\ell_0)$-legality, in direct
analogy with the subshift $\Omega(S)$ characterized by $S$-legal patches.

\begin{theorem}[Finite set of partial limits]
\label{Thm:SubshiftPartialLimit}
Let $S$ be a substitution map on $\Aa^\Gamma$ with a convenient testing domain $T$.
For each $\omega_0\in\Aa^\Gamma$, there exist $\ell_0\in\N$ and 
$\Qq_0,\ldots,\Qq_{\ell_0-1}\subseteq \Aa^T$ such that
\[
\Theta_{n\ell_0+j}
\;\overset{n\to\infty}{\longrightarrow}\;
\Omega_j
:=
\bigl\{
\omega\in\Aa^\Gamma
\;\big|\;
\omega \text{ is $(\Qq_j,\ell_0)$-legal}
\bigr\}
\]
for each $j\in\{0,\ldots,\ell_0-1\}$.
\end{theorem}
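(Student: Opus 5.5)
Set $\Pp_n:=W(S^n(\omega_0))_T\subseteq\Aa^T$. The proof has two parts. First we show that the finite sets $\Pp_n$ are eventually periodic. Then we show that the eventual values $\Qq_j:=\Pp_{N_0+j}$ (suitably indexed) determine the limit of $\Theta_{n\ell_0+j}$ through $(\Qq_j,\ell_0)$-legality. The mechanism is that condition (a) of Definition~\ref{Def:TestingDomain} lets large patches be read off from substituted $T$-patches, while condition (b) makes the passage $\Pp_n\mapsto\Pp_{n+1}$ a well-defined map.

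\emph{Step 1 (eventual periodicity).} We claim that $\Pp_{n+1}$ depends only on $\Pp_n$. By condition (b), every translate $xT$ lies in $F(1,\gamma T)$ for some $\gamma$. Combining this with Definition~\ref{Def:SubstitutionMap}(c) and equivariance (b), every $T$-patch of $S^{n+1}(\omega_0)=S(S^n(\omega_0))$ is a subpatch of $S(P)$ for some $P\in\Pp_n$. Conversely, each $T$-subpatch of $S(P)$ with $P\in\Pp_n$ occurs in $S^{n+1}(\omega_0)$, again by (c). Hence $\Pp_{n+1}=\Phi(\Pp_n)$, where $\Phi(\Pp)$ is the set of $T$-subpatches of the patches $S(P)$, $P\in\Pp$. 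Since $\Phi$ is a self-map of the finite set $2^{\Aa^T}$, the sequence $(\Pp_n)$ is eventually periodic: there are $N_0$ and $\ell_0$ with $\Pp_{n+\ell_0}=\Pp_n$ for all $n\ge N_0$. Since the statement allows any $\ell_0$, we reindex residues modulo $\ell_0$ and set $\Qq_j:=\Pp_{m}$, where $m\ge N_0$ and $m\equiv j \pmod{\ell_0}$.

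\emph{Step 2 (dictionary of the iterates).} Fix $j$ and $n$ large, and write $m=n\ell_0+j$. We compare $W(S^m(\omega_0))$ with $W(\Qq_j,\ell_0)$ on balls of a fixed radius $r$.

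For the inclusion $W(S^m(\omega_0))_{B(e,r)}\subseteq W(\Qq_j,\ell_0)$, choose $k$ so large that $k\ell_0\ge n_r$. By condition (a), every $r$-ball $xB(e,r)$ lies in $F(k\ell_0,\gamma T)$ for some $\gamma$. Using (c), the $r$-patch of $S^m(\omega_0)$ on this ball is therefore a subpatch of $S^{k\ell_0}(Q)$, where $Q$ is a $T$-patch of $S^{m-k\ell_0}(\omega_0)$. For $n$ large, $m-k\ell_0\ge N_0$, so $Q\in\Qq_j$.

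For the reverse inclusion on $r$-balls, suppose $P\prec S^{k\ell_0}(Q)$ with $Q\in\Qq_j$ and $\supp(P)$ a ball of radius $r$. We may enlarge $k$: by condition (b) with $\gamma(e)=e$ and periodicity, $Q$ itself appears as a $T$-subpatch of $S^{\ell_0}(Q')$ for some $Q'\in\Qq_j$. Since $Q$ occurs in $S^{m-k\ell_0}(\omega_0)$ whenever $m-k\ell_0\ge N_0$ is in the right residue class, $P$ occurs in $S^m(\omega_0)$ by (c).

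\emph{Step 3 (convergence in $\Jj$).} Convergence in $\Jj$ is equivalent to the dictionaries on balls of radius $r$ eventually coinciding for every $r$, by the local pattern topology characterization \cite{BeckusThesis}. Step 2 gives
\[
W(\Theta_m)_{B(x,r)}=W(\Qq_j,\ell_0)_{B(x,r)}
\]
for all large $n$. It then remains to check that $W(\Omega_j)=W(\Qq_j,\ell_0)$, i.e. that every $(\Qq_j,\ell_0)$-legal ball patch extends to a $(\Qq_j,\ell_0)$-legal configuration. We would derive this by a compactness argument from the fact that each ball patch sits inside a larger legal ball patch. That fact follows from condition (a) together with the observation in Step 2 that each $Q\in\Qq_j$ lies inside $S^{\ell_0}(Q')$ for some $Q'\in\Qq_j$.

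\emph{Main obstacle.} The delicate point is the bookkeeping in Step 2's reverse inclusion and in Step 3's extension property. Concretely, we must show that every $(\Qq_j,\ell_0)$-legal patch actually occurs in $S^{n\ell_0+j}(\omega_0)$ for all large $n$, and not merely for some $n$. The key is to use $\gamma(n,e)=e$ and $\gamma(e)=e$ so that the occurrence propagates forward in time in steps of $\ell_0$.
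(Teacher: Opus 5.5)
Your proposal is correct and follows essentially the paper's route: eventual periodicity of the $T$-dictionaries via the self-map $\Phi$ on $2^{\Aa^T}$, then identification of the ball dictionaries of $S^{n\ell_0+j}(\omega_0)$ with $W(\Qq_j,\ell_0)_{B(r)}$ via condition (a) of Definition~\ref{Def:TestingDomain}, and finally the local-pattern characterization of convergence in $\Jj$ from \cite{BeckusThesis}. The ``main obstacle'' you flag is harmless and does not need $\gamma(e)=e$: since $\Aa^{B(e,r)}$ is finite, each of the finitely many legal patches $P\prec S^{k_P\ell_0}(Q_P)$ with $Q_P\in\Qq_j$ occurs in $S^m(\omega_0)$ for all $m\ge N_0+k_P\ell_0$ with $m\equiv j \pmod{\ell_0}$ (the paper packages this as the monotone stabilization in Lemma~\ref{Lem:Qq-patches-stabilize}(b)); your Step~3 extension property then follows by taking the $B(R)$-patch of such an $S^m(\omega_0)$ around an occurrence of $P$, rather than from $Q\prec S^{\ell_0}(Q')$, which could keep $P$ at the boundary of every iterate.
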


As a consequence of the theorem the structural defects are given by $W(\Qq_j,\ell_0)\setminus W(S)$. 

\begin{corollary}
\label{Cor:FiniteLimit_Spectra}
Let $S$ be a substitution map on $\Aa^\Gamma$ with a convenient testing domain $T$ and let $H=(H_\omega)_{\omega\in\Aa^\Gamma}$ be a finite-range Schr\"odinger operator. For every $\omega_0\in\Aa^\Gamma$, there exist compact sets $\Sigma_0,\ldots,\Sigma_{\ell_0-1}\subseteq\R$ such that
\[
\lim_{n\to\infty} \sigma\bigl(H_{S^{n\ell_0+j}(\omega_0)}\bigr) = \Sigma_j
\qquad\text{for all } 0\le j\le \ell_0-1
\]
in the Hausdorff metric on compact subsets of $\R$.
\end{corollary}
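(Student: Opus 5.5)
This follows by combining Theorem~\ref{Thm:SubshiftPartialLimit} with Proposition~\ref{Prop:ConvSubsh-Spectra}, together with the identity $\sigma(H_{\overline{\Orb(\omega)}})=\sigma(H_\omega)$ recalled at the beginning of Section~\ref{Sec:SpectralResults}.

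First, fix $\omega_0\in\Aa^\Gamma$. Theorem~\ref{Thm:SubshiftPartialLimit} gives $\ell_0\in\N$ and subshifts $\Omega_0,\ldots,\Omega_{\ell_0-1}\in\Jj$ with $\Theta_{n\ell_0+j}\to\Omega_j$ in $\Jj$ for each $j$. One should check that each $\Omega_j$ is indeed a subshift, that is, nonempty, closed and $\Gamma$-invariant, so that it lies in $\Jj$. This is implicit in the theorem, since limits in the compact space $\Jj$ are again elements of $\Jj$.

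Second, for each fixed $j$ apply Proposition~\ref{Prop:ConvSubsh-Spectra} to the sequence $(\Theta_{n\ell_0+j})_{n\in\N}$. This yields $\sigma(H_{\Theta_{n\ell_0+j}})\to\sigma(H_{\Omega_j})$ in the Hausdorff metric. We then set $\Sigma_j:=\sigma(H_{\Omega_j})$, which is compact by definition as the closure of a bounded set.

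Third, since $\Theta_n=\overline{\Orb(S^n(\omega_0))}$ is an orbit closure, covariance and strong continuity give $\sigma(H_{\Theta_n})=\sigma(H_{S^n(\omega_0)})$. Substituting this into the previous step gives the claim.

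The only delicate point is that Proposition~\ref{Prop:ConvSubsh-Spectra} is stated for countable amenable groups, whereas the corollary imposes no amenability. I would avoid this by using only the implication (i)$\Rightarrow$(ii). That direction follows from the general continuity results of \cite{BBdN18} for continuous fields of operators, which do not need amenability for this implication, since it rests only on strong continuity and covariance. Failing that, I would add the standing assumption that $\Gamma$ is amenable, which covers every example considered in the paper ($\Z^d$ and lattices in homogeneous Lie groups).
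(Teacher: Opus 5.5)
Your main argument is the paper's own: its proof simply cites Theorem~\ref{Thm:SubshiftPartialLimit} and Proposition~\ref{Prop:ConvSubsh-Spectra}. Your use of $\sigma(H_{\overline{\Orb(\omega)}})=\sigma(H_\omega)$ to pass from $\sigma(H_{\Theta_n})$ to $\sigma(H_{S^n(\omega_0)})$ makes explicit a step the paper leaves unstated.

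Your first remedy for the amenability issue is wrong. The implication (i)$\Rightarrow$(ii) does not ``rest only on strong continuity and covariance''; it is exactly the direction that needs more.

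\begin{itemize}
\item Strong continuity gives only one half of Hausdorff convergence. Every $E\in\sigma(H_{\Omega_j})$ is a limit of points of $\sigma(H_{\Theta_{n\ell_0+j}})$: a finitely supported Weyl vector for $H_\omega$, $\omega\in\Omega_j$, is also an approximate eigenvector for $H_{\omega_n}$, where $\omega_n\in\Theta_{n\ell_0+j}$ and $\omega_n\to\omega$.
\item The other half says that limits of points $E_n\in\sigma(H_{\Theta_{n\ell_0+j}})$ lie in $\sigma(H_{\Omega_j})$. Covariance and strong continuity give no control here. The Weyl vectors of $H_{\omega_n}$ may have supports growing with $n$. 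You must cut them down to balls of fixed radius on which $\omega_n$ agrees, up to translation, with an element of $\Omega_j$.
\item That localization is where amenability enters. Compare the cut-off argument in the proof of Lemma~\ref{Lem:EssSpectr_InclInOmega(S)}, which needs strict polynomial growth. In \cite{BBdN18}, amenability is the hypothesis under which the equivalence is proved.
\end{itemize}

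So drop that remedy and keep your fallback: read the corollary with $\Gamma$ countable and amenable. This is what the paper tacitly assumes by invoking Proposition~\ref{Prop:ConvSubsh-Spectra}, and it covers all of the paper's examples.
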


\begin{proof}
This follows directly from Theorem~\ref{Thm:SubshiftPartialLimit} and Proposition~\ref{Prop:ConvSubsh-Spectra}.
\end{proof}

We continue proving Theorem~\ref{Thm:SubshiftPartialLimit}. The key observation is that the dictionary on the convenient testing domain $T$ is eventually
periodic.

\begin{lemma}[Eventual periodicity of the $T$-dictionary]
\label{Lem:EventualPeriodicityT}
Let $S$ be a substitution map on $\Aa^\Gamma$ with a convenient testing domain $T$.
For every $\omega_0\in\Aa^\Gamma$, there exist integers $N_0,\ell_0\in\N$ such that
\[
W\big(S^n(\omega_0)\big)_T
=
W\big(S^{n+\ell_0}(\omega_0)\big)_T
\qquad\text{for all } n\ge N_0.
\]
Moreover, $N_0$ and $\ell_0$ are uniformly bounded by the cardinality $\#\,2^{\Aa^T}$.
\end{lemma}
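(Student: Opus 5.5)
The plan is to show that the $T$-dictionary of $S^{n+1}(\omega_0)$ is determined by that of $S^n(\omega_0)$ through a fixed map $\Phi:2^{\Aa^T}\to 2^{\Aa^T}$. Eventual periodicity then follows from the pigeonhole principle on the finite set $2^{\Aa^T}$.

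\textbf{Step 1 (definition of $\Phi$).} For $\mathcal{P}\subseteq\Aa^T$ set
\[
\Phi(\mathcal{P}) := \bigl\{ Q\in\Aa^T \;\big|\; Q\prec S(P) \text{ for some } P\in\mathcal{P}\bigr\}.
\]
This depends only on $\mathcal{P}$, $S$ and $T$, not on $\omega_0$.

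\textbf{Step 2 (key identity).} I will prove that for every $\omega\in\Aa^\Gamma$
\[
W(S(\omega))_T=\Phi\bigl(W(\omega)_T\bigr).
\]

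For ``$\subseteq$'', let $Q\in W(S(\omega))_T$. Then $Q=(x^{-1}S(\omega))|_T$ for some $x\in\Gamma$, i.e.\ $Q$ is read off $S(\omega)$ on $xT$. By Definition~\ref{Def:TestingDomain}\eqref{enu:NT_simplified} there is $\gamma$ with $xT\subseteq F(1,\gamma T)$. By Definition~\ref{Def:SubstitutionMap}(c) (with $n=1$, $M=\gamma T$) we have $S(\omega)|_{F(1,\gamma T)}=S(\omega|_{\gamma T})$. Definition~\ref{Def:SubstitutionMap}(b) gives $S(\omega|_{\gamma T}) = D(\gamma)S(P)$ with $P:=(\gamma^{-1}\omega)|_T\in W(\omega)_T$. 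Hence $Q\prec S(P)$.

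For ``$\supseteq$'', let $P\in W(\omega)_T$, say $P$ occurs at $\gamma T$. Then $S(P)$ occurs in $S(\omega)$ on $F(1,\gamma T)$, again by (b) and (c). Since $\prec$ is transitive, every $Q\in\Aa^T$ with $Q\prec S(P)$ lies in $W(S(\omega))_T$.

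The only care needed is the bookkeeping of translations through the homomorphism $D$, i.e.\ checking that $\gamma F(1,T)$ versus $D(\gamma)F(1,T)$ is handled consistently with (b). I expect this to be the main, though routine, obstacle.

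\textbf{Step 3 (pigeonhole).} Put $\mathcal{P}_n:=W(S^n(\omega_0))_T$. By Step 2, $\mathcal{P}_{n+1}=\Phi(\mathcal{P}_n)$, so $(\mathcal{P}_n)_n$ is the orbit of $\mathcal{P}_0$ under a self-map of the finite set $2^{\Aa^T}$.

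Among $\mathcal{P}_0,\dots,\mathcal{P}_{K}$ with $K=\#2^{\Aa^T}$ two must coincide, say $\mathcal{P}_{N_0}=\mathcal{P}_{N_0+\ell_0}$ with $0\le N_0<N_0+\ell_0\le K$. Applying $\Phi$ repeatedly gives $\mathcal{P}_n=\mathcal{P}_{n+\ell_0}$ for all $n\ge N_0$. Both $N_0$ and $\ell_0$ are at most $\#2^{\Aa^T}$. If $N_0=0$ occurs, replace it by $1$ to have $N_0\in\N$; this keeps the bound, since $K\ge 2$.
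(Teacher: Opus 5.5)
Your proposal is correct and follows the paper's proof essentially verbatim: the same map $\Phi$ on $2^{\Aa^T}$, and the same identity $W(S^{n+1}(\omega_0))_T=\Phi\bigl(W(S^n(\omega_0))_T\bigr)$, obtained from condition (b) of the convenient testing domain and property (c) of the substitution map. The conclusion is the same pigeonhole argument on the finite set $2^{\Aa^T}$, and your explicit use of $S(\gamma P)=D(\gamma)S(P)$ to identify $S(\omega|_{\gamma T})$ with a translate of $S(P)$, $P\in W(\omega)_T$, makes precise a step the paper only states up to translation.
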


\begin{definition}
\label{Def:Q_j}
Let $S$ be a substitution map on $\Aa^\Gamma$ with a convenient testing domain $T$.
For $\omega_0\in\Aa^\Gamma$ and integers $N_0,\ell_0\in\N$ as in
Lemma~\ref{Lem:EventualPeriodicityT}, define the \emph{associated patch sets} by
\[
\Qq_j
:=
W\big(S^{N_0\ell_0+j}(\omega_0)\big)_T,
\qquad j=0,\ldots,\ell_0-1.
\]
\end{definition}

\begin{remark}
\label{Rem:Period_and_Q_j}
The following considerations show that the period $\ell_0\in\N$ in Lemma~\ref{Lem:EventualPeriodicityT} and the associated patch sets $\Qq_j$ coincide with the corresponding quantities in Theorem~\ref{Thm:SubshiftPartialLimit}.
In particular, any eventual period of the $T$-dictionaries as in Lemma~\ref{Lem:EventualPeriodicityT} provides an upper bound on the number of limit points of the iterated subshifts $(\Theta_n)_{n\in\N}$.
\end{remark}

\begin{proof}[Proof of Lemma~\ref{Lem:EventualPeriodicityT}]
We encode the evolution of $T$-dictionaries by a self-map on a finite space.
Define
\[
\Phi:2^{\Aa^T}\longrightarrow 2^{\Aa^T},\qquad
\Phi(\Pp)
:=
\bigl\{
Q\in\Aa^T \;:\; \exists\,P\in\Pp \text{ with } Q\prec S(P)
\bigr\}.
\]
For $n\in\N_0$, set
\[
\Pp_n := W\bigl(S^n(\omega_0)\bigr)_T \subseteq \Aa^T .
\]

We first show that $\Pp_{n+1}=\Phi(\Pp_n)$ for all $n\in\N_0$.

\begin{itemize}
\item[$\subseteq$:]
Let $Q\in\Pp_{n+1}$. Then
$Q\prec S^{n+1}(\omega_0)=S\bigl(S^n(\omega_0)\bigr)$.
Choose $x\in\Gamma$ such that $xQ = S^{n+1}(\omega_0)\vert_{xT}$.
Since $T$ is a convenient testing domain (Definition~\ref{Def:TestingDomain}~\eqref{enu:NT_simplified}), there exists $\gamma\in\Gamma$ such that
$xT\subseteq F(1,\gamma T)$.
Definition~\ref{Def:SubstitutionMap} implies
\[
xQ
\prec
S\bigl(S^n(\omega_0)\bigr)\vert_{F(1,\gamma T)}
=
S\bigl(S^n(\omega_0)\vert_{\gamma T}\bigr).
\]
Since $S^n(\omega_0)\vert_{\gamma T}\in\Pp_n$, we conclude
$Q\in\Phi(\Pp_n)$.

\item[$\supseteq$:]
Let $Q\in\Phi(\Pp_n)$. Then there exists $P\in\Pp_n$ with
$Q\prec S(P)$.
Since $P\prec S^n(\omega_0)$, it follows that
$S(P)\prec S^{n+1}(\omega_0)$, and hence
$Q\in\Pp_{n+1}$.
\end{itemize}

As $2^{\Aa^T}$ is finite, every orbit of $\Phi$ is eventually periodic.
Specifically, for $\Pp_0=W(\omega_0)_T$, there exist integers $N_0,\ell_0\in\N$,
bounded by $\#\,2^{\Aa^T}$, such that
\[
\Pp_n=\Pp_{n+\ell_0}
\qquad\text{for all } n\ge N_0.
	\hfill \qedhere
\]
\end{proof}

As an immediate consequence of the previous result we obtain the following stabilization result for patches under iterations of $S^{\ell_0}$.

\begin{lemma}
\label{Lem:Qq-patches-stabilize}
Let $S$ be a substitution map on $\Aa^\Gamma$ with a convenient testing domain $T$.
Let $\omega_0\in\Aa^\Gamma$ with associated patch sets $\{\Qq_j\}_{j=0}^{\ell_0-1}$.
Then the following holds for each $j\in\{0,\ldots,\ell_0-1\}$.
\begin{enumerate}[(a)]
\item With the convention $\Qq_{-1}=\Qq_{\ell_0-1}$, we have
\begin{align*}
\Qq_j = &\{ P\in\Aa^T \mid \exists Q\in\Qq_{j-1} \text{ such that } P\prec S(Q)\}\\ 
	= &\{ P\in\Aa^T \mid \exists Q\in\Qq_j \text{ such that } P\prec S^{\ell_0}(Q)\}
\end{align*}
\item For all $r\ge 1$, there exists an $n_r\in\N$ such that
\begin{align*}
W(\Qq_j,\ell_0)_{B(r)} 
= &
\bigl\{
	P\in\Aa^{B(r)}
	\;\big|\;
	\exists\,Q\in\Qq_j \text{ with } P\prec S^{n_r\ell_0}(Q)
\bigr\}\\
= &
\bigl\{
	P\in\Aa^{B(r)}
	\;\big|\;
	\forall n\in\N,\, 
	\exists\,Q\in\Qq_j \text{ with } P\prec S^{(n+n_r)\ell_0}(Q)
\bigr\}.
\end{align*}
\end{enumerate}
\end{lemma}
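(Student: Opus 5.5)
For part (a) I will work with the map $\Phi$ from the proof of Lemma~\ref{Lem:EventualPeriodicityT} and the sets $\Pp_n=W(S^n(\omega_0))_T$, which satisfy $\Pp_{n+1}=\Phi(\Pp_n)$. By Definition~\ref{Def:Q_j}, $\Qq_j=\Pp_{N_0\ell_0+j}$. Since $N_0\ell_0\ge N_0$, eventual periodicity gives $\Pp_{N_0\ell_0+j+\ell_0}=\Pp_{N_0\ell_0+j}$ for all $j\ge 0$.

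For $1\le j\le\ell_0-1$, the first equality is $\Qq_j=\Phi(\Qq_{j-1})$. For $j=0$, I use $\Phi(\Qq_{\ell_0-1})=\Pp_{N_0\ell_0+\ell_0}=\Pp_{N_0\ell_0}=\Qq_0$, which matches the convention $\Qq_{-1}=\Qq_{\ell_0-1}$.

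For the second equality I need the iterate formula $\Phi^k(\Pp)=\{P\in\Aa^T\mid \exists Q\in\Pp,\ P\prec S^k(Q)\}$. I prove it by induction on $k$, repeating the two inclusions from the proof of Lemma~\ref{Lem:EventualPeriodicityT}. For "$\subseteq$": write $S^{k+1}(Q)=S(S^k(Q))$. If $xP$ is a restriction of it, Definition~\ref{Def:TestingDomain}\eqref{enu:NT_simplified} gives $xT\subseteq F(1,\gamma T)$. Definition~\ref{Def:SubstitutionMap}(c), applied to the patch $S^k(Q)$ and the set $M=\gamma T$, shows that $P$ is a subpatch of $S$ of a $T$-patch of $S^k(Q)$. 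The point to check is that $\gamma T\subseteq\supp S^k(Q)$. I expect this is where care is needed. If it fails, I will instead apply the argument inside the configuration $S^{n}(\omega_0)$, where supports are all of $\Gamma$, which is all that is needed here. For "$\supseteq$": $P\prec S^k(Q)$ implies $P\prec S(R)$ for a $T$-patch $R\prec S^{k-1}(Q)$, again by (b). Then $\Phi^{\ell_0}(\Qq_j)=\Qq_j$ gives the second description.

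For part (b) I set $A_n:=\{P\in\Aa^{B(r)}\mid \exists Q\in\Qq_j,\ P\prec S^{n\ell_0}(Q)\}$. First, the sets increase: $A_n\subseteq A_{n+1}$. Indeed, if $P\prec S^{n\ell_0}(Q)$, write $Q$'s image as $S^{n\ell_0}(Q)$ and pick any $Q'\in\Qq_j$ with $Q\prec S^{\ell_0}(Q')$, which exists by (a). Then $S^{n\ell_0}(Q)\prec S^{(n+1)\ell_0}(Q')$, using that subpatch relations are preserved by $S$ via Definition~\ref{Def:SubstitutionMap}(b),(c). Since $\Aa^{B(r)}$ is finite, the increasing sequence $(A_n)$ stabilizes at some $n_r$. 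By definition $W(\Qq_j,\ell_0)_{B(r)}=\bigcup_n A_n=A_{n_r}$, and monotonicity gives $A_{n+n_r}=A_{n_r}$ for all $n$, which yields both displayed equalities.

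The only delicate point overall is the support bookkeeping in the iterate formula. In (b) it is avoided entirely, because (b) only uses the existence part of (a) together with monotonicity.
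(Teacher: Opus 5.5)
Your proposal is correct and follows the paper's route. The first equality in (a) is the one-step recursion $\Pp_{n+1}=\Phi(\Pp_n)$ inside the configurations $S^n(\omega_0)$, read at the indices $N_0\ell_0+j$, and your monotonicity-plus-finiteness argument for the sets $A_n$ is exactly what stands behind the paper's ``follow immediately'' in (b). The support worry you raise for a general iterate formula for $\Phi^k$ is genuine but never needs resolving. Chaining the first equality $\ell_0$ times around the cycle gives $\Qq_j\subseteq\{P\in\Aa^T\mid \exists Q\in\Qq_j,\ P\prec S^{\ell_0}(Q)\}$, using only that $\prec$ is preserved by $S$. The reverse inclusion is immediate from $Q\prec S^{N_0\ell_0+j}(\omega_0)\Rightarrow S^{\ell_0}(Q)\prec S^{(N_0+1)\ell_0+j}(\omega_0)$, so your configuration-based fallback is the argument to use.
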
 

\begin{proof}
(a) 
By Lemma~\ref{Lem:EventualPeriodicityT} there is an $N_0\in\N$ such that for $n\in \N$
\[
\Qq_j
=
W\big(S^{(N_0+n)\ell_0+j}(\omega_0)\big)_T.
\]
We first note that the first equality immediately implies the second.
Let $P\in\Qq_j$. Thus, there exists an $x\in \Gamma$ such that $xP= S^{(N_0+1)\ell_0+j}(\omega_0)|_{xT}$. Since $T$ is a convenient testing domain (Definition~\ref{Def:TestingDomain}~\eqref{enu:NT_simplified}), there exists a $\gamma\in \Gamma$ such that $xT\subseteq F(1,\gamma T)$. Hence,
\[
P\prec S\Big(S^{(N_0+1)\ell_0 -1+j}(\omega_0)\Big)|_{F(1,\gamma T)}
	= S\Big(S^{(N_0+1)\ell_0+(j-1)}(\omega_0)|_{\gamma T}\Big).
\]
Since $S^{(N_0+1)\ell_0 + (j-1)}(\omega_0)|_{\gamma T}$ is a shift of a patch $Q\in\Qq_{j-1}$, we conclude $P\prec S(Q)$. 

If conversely $P\in\Aa^T$ and $Q\in\Qq_{j-1}$ are such that $P\prec S(Q)$. Then $Q\prec S^{(N_0+1)\ell_0+j-1}(\omega_0)$ holds by definition implying $P\prec S^{(N_0+1)\ell_0+j}(\omega_0)$ and $P\in\Aa^T$, i.e. $P\in\Qq_j$.

\medskip

(b) By definition of $(\Qq_j,\ell_0)$-legal, we have
\[
W(\Qq_j,\ell_0)_{B(r)}
	= \{ P\in\Aa^{B(r)} \mid \exists n\in\N, \, \exists Q\in\Qq_j \text{ such that } P\prec S^{n\ell_0}(Q) \}.
\]
Combined with (a), the equalities in (b) follow immediately.
\end{proof}

Following \cite[Sec.~3.3]{BeckusThesis}, it suffices for
Theorem~\ref{Thm:SubshiftPartialLimit} to show that for every $r>0$ the
$B(r)$-dictionaries of $S^{(N_0+n)\ell_0+j}(\omega_0)$ stabilize for $n$ large and are
described by $(\Qq_j,\ell_0)$-legal patches.

\begin{lemma}[Stabilization of patches supported on balls]
\label{Lem:DictBallStabilizeGeneral}
Let $S$ be a substitution map on $\Aa^\Gamma$ with a convenient testing domain $T$ and let $\omega_0\in\Aa^\Gamma$ with associated patch sets $\{\Qq_j\}_{j=0}^{\ell_0-1}$.
For every $r\ge 1$ there exist integers $n_r\in\N$ such that for all
$n\ge n_r$ and all $j\in\{0,\dots,\ell_0-1\}$,
\[
W\bigl(S^{(N_0+n)\ell_0+j}(\omega_0)\bigr)_{B(r)}
= W(\Qq_j,\ell_0)_{B(r)}
\]

\end{lemma}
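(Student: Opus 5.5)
We prove the two inclusions separately, using the two conditions of a convenient testing domain (Definition~\ref{Def:TestingDomain}) together with the locality property Definition~\ref{Def:SubstitutionMap}(c) and Lemma~\ref{Lem:Qq-patches-stabilize}. Throughout we write $m:=(N_0+n)\ell_0+j$, so that $W(S^{(N_0+k)\ell_0+j}(\omega_0))_T=\Qq_j$ for all $k\in\N_0$ by Lemma~\ref{Lem:EventualPeriodicityT} and Definition~\ref{Def:Q_j}.

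\emph{Inclusion $\subseteq$.} Fix $r\ge1$ and let $n_r'$ be the number from Definition~\ref{Def:TestingDomain}\eqref{enu:TestingDomain_simplified}, so that for every $k\ge n_r'$ and $x\in\Gamma$ there is $\gamma$ with $xB(e,r)\subseteq F(k,\gamma T)$. Choose $n_r$ with $n_r\ell_0\ge n_r'$ and let $n\ge n_r$. Suppose $P\in W(S^m(\omega_0))_{B(r)}$, say $xP=S^m(\omega_0)|_{xB(r)}$. Apply the testing-domain condition with $k=n\ell_0$ to get $\gamma$ with $xB(r)\subseteq F(n\ell_0,\gamma T)$. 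By Definition~\ref{Def:SubstitutionMap}(c),
\[
S^m(\omega_0)|_{F(n\ell_0,\gamma T)}=S^{n\ell_0}\bigl(S^{N_0\ell_0+j}(\omega_0)|_{\gamma T}\bigr).
\]
The patch $S^{N_0\ell_0+j}(\omega_0)|_{\gamma T}$ is a translate of some $Q\in\Qq_j$. Since $S(\gamma Q)=D(\gamma)S(Q)$, translation commutes with $S^{n\ell_0}$ up to replacing $\gamma$ by $D^{n\ell_0}(\gamma)$, and so $P\prec S^{n\ell_0}(Q)$. Thus $P\in W(\Qq_j,\ell_0)_{B(r)}$.

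\emph{Inclusion $\supseteq$.} Let $P\in W(\Qq_j,\ell_0)_{B(r)}$. By Lemma~\ref{Lem:Qq-patches-stabilize}(b), after possibly enlarging $n_r$ to dominate the $n_r$ appearing there, for every $n\ge n_r$ there exists $Q\in\Qq_j$ with $P\prec S^{n\ell_0}(Q)$. Since $Q\in\Qq_j=W(S^{N_0\ell_0+j}(\omega_0))_T$, we have $Q\prec S^{N_0\ell_0+j}(\omega_0)$. Applying $S^{n\ell_0}$ and using Definition~\ref{Def:SubstitutionMap}(b),(c) gives $S^{n\ell_0}(Q)\prec S^m(\omega_0)$, and hence $P\in W(S^m(\omega_0))_{B(r)}$.

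Taking $n_r$ to be the maximum of the two thresholds, which is finite since there are only finitely many $j$, proves the lemma. We expect the main obstacle to be bookkeeping rather than substance. First, one must check that the relation $\prec$ is preserved by $S^k$; this follows from (b) and (c) of Definition~\ref{Def:SubstitutionMap}, since the subpatch on $\gamma\supp(Q)$ is mapped to the subpatch on $F(k,\gamma\supp Q)=D^k(\gamma)F(k,\supp Q)$. Second, one must make sure the threshold in Lemma~\ref{Lem:Qq-patches-stabilize}(b) is used in its ``for all $n$'' form, so that a single $n_r$ works for every $n\ge n_r$.
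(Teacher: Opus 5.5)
Your proposal is correct and follows essentially the paper's argument: the testing-domain covering plus locality of $S$ (Definition~\ref{Def:SubstitutionMap}(c)) for $\subseteq$, and pushing $Q\in\Qq_j\prec S^{N_0\ell_0+j}(\omega_0)$ forward for $\supseteq$, with Lemma~\ref{Lem:Qq-patches-stabilize}(b) supplying uniformity in $n$. The only difference is cosmetic. You substitute $n\ell_0$ times from the fixed level $N_0\ell_0+j$, so $\subseteq$ lands in $W(\Qq_j,\ell_0)$ directly by definition. The paper instead substitutes $n_r\ell_0$ times from level $(N_0+n-n_r)\ell_0+j$ and passes through the intermediate set $W(j,r)$; your explicit maximum of the two thresholds is slightly cleaner bookkeeping than the paper's reuse of the symbol $n_r$.
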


\begin{proof}
Fix $r\ge 1$.
Since $T$ is a convenient testing domain (Definition~\ref{Def:TestingDomain}~\eqref{enu:TestingDomain_simplified}), there exists $n_r\in\N$ such that
for all $n\ge n_r$ and all $x\in\Gamma$ there exists $\gamma=\gamma(n,x)\in\Gamma$
with
\[
xB(r)\subseteq F(n,\gamma T).
\]
Fix $j\in\{0,\dots,\ell_0-1\}$ and $n\ge n_r$.
By Lemma~\ref{Lem:EventualPeriodicityT}, we have
\begin{equation}
\label{eq:Pf-DictBallStabilizeGeneral}
W\bigl(S^{(N_0+n-n_r)\ell_0+j}(\omega_0)\bigr)_T=\Qq_j.
\end{equation}

\underline{Step 1:} Using \eqref{eq:Pf-DictBallStabilizeGeneral}, we first prove the identity
\[
W\bigl(S^{(N_0 + n)\ell_0+j}(\omega_0)\bigr)_{B(r)}
= W(j,r):=
\bigl\{
	P\in\Aa^{B(r)}
	\;\big|\;
	\exists\,Q\in\Qq_j \text{ such that } P\prec S^{n_r\ell_0}(Q)
\bigr\}.
\]

\noindent\underline{$\subseteq$:}
Let $P\in W\bigl(S^{(N_0 + n)\ell_0+j}(\omega_0)\bigr)_{B(r)}$ and $x\in \Gamma$ such that $xP=S^{(N_0+n)\ell_0+j}(\omega_0)|_{xB(r)}$.
By the choice of $n_r$, there exist $\gamma=\gamma(n_r\ell_0,x)\in\Gamma$ such that
\begin{align*}
xP
=
S^{(N_0 + n)\ell_0+j}(\omega_0)|_{xB(r)}
&\prec
S^{n_r\ell_0}\bigl(S^{(N_0 + n-n_r)\ell_0+j}(\omega_0)\bigr)\vert_{F(n_r\ell_0,\gamma T)}\\
&=
S^{n_r\ell_0}\bigl(S^{(N_0 + n-n_r)\ell_0+j}(\omega_0)\vert_{\gamma T}\bigr),
\end{align*}
and hence $P\prec S^{n_r\ell_0}(Q)$ for some $Q\in\Qq_j$ by Equation~\eqref{eq:Pf-DictBallStabilizeGeneral}.

\noindent\underline{$\supseteq$:}
If $P\in \Aa^{B(r)}$ with $P\prec S^{n_r\ell_0}(Q)$ for some $Q\in\Qq_j$, then
$Q\prec S^{(N_0 + n-n_r)\ell_0+j}(\omega_0)$ follows by Equation~\eqref{eq:Pf-DictBallStabilizeGeneral}.
Thus, $P\prec S^{(N_0 + n)\ell_0+j}(\omega_0)$ follows proving $P\in\Qq_j$.

\medskip

\underline{Step 2:} Combining Lemma~\ref{Lem:Qq-patches-stabilize}~(b) with Step~1, we conclude $W(j,r)=W(\Qq_j,\ell_0)_{B(r)}$.
\end{proof}

\begin{proof}[Proof of Theorem~\ref{Thm:SubshiftPartialLimit}]
Fix $j\in\{0,\dots,\ell_0-1\}$.
By Lemma~\ref{Lem:DictBallStabilizeGeneral}, for every $r>0$, the $B(r)$-dictionary of
$S^{(N_0+n)\ell_0+j}(\omega_0)$ stabilizes for $n$ large and equals $W(\Qq_j,\ell_0)_{B(r)}$.
Hence, the subshifts $\Theta_{n\ell_0+j}$ converge in $\Jj$ \cite{BeckusThesis}, and their limit equals
\[
\Omega_j
	= \{\omega\in\Aa^\Gamma \mid \omega \text{ is $(\Qq_j,\ell_0)$-legal}\}.
	\hfill\qedhere
\]
\end{proof}

%%%%%%%%%%%%%%%%%%%%%%%%%%%%%%%%%%%%%%%%%%%%%%%%%%%%%%%%%%%%%%%%%%%%%%%%%%%%%%%%
\section{Structural defects via fixed points}
\label{Sec:Limits_FixedPoints}
%%%%%%%%%%%%%%%%%%%%%%%%%%%%%%%%%%%%%%%%%%%%%%%%%%%%%%%%%%%%%%%%%%%%%%%%%%%%%%%%

We now establish a general mechanism to represent $\Omega_j$ as a finite union of orbit
closures provided suitable fixed points of the substitution map $S$ exist.
A configuration $\rho\in\Aa^\Gamma$ is an \emph{$S$-fixed point} if $S^L(\rho)=\rho$ for
some $L\in\N$.

\begin{proposition}
\label{Prop:LimitSubshifts_FiniteUnion}
Let $S$ be a substitution map on $\Aa^\Gamma$ with a convenient testing domain $T$ and let $\omega_0\in\Aa^\Gamma$ with associated patch sets $\{\Qq_j\}_{j=0}^{\ell_0-1}$ and limiting subshifts $\{\Omega_j\}_{j=0}^{\ell_0-1}$ as in Theorem~\ref{Thm:SubshiftPartialLimit}.
Fix $j\in\{0,\dots,\ell_0-1\}$. Assume that for each $Q\in\Qq_j$ there exists a
configuration $\rho_Q\in\Omega_j$ such that $Q\prec \rho_Q$ and $\rho_Q$ is an
$S$-fixed point. Then
\[
\Omega_j
=
\bigcup_{Q\in\Qq_j} \overline{\Orb(\rho_Q)}.
\]
If $S$ is additionally primitive and $\Omega_j\neq\Omega(S)$, then we can replace $\Qq_j$ by $\Qq_j\setminus W(S)$.
\end{proposition}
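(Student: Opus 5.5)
The plan is to prove the two inclusions separately, using the fixed-point property of the $\rho_Q$ to turn $(\Qq_j,\ell_0)$-legality into occurrence inside some $\rho_Q$.

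\emph{Inclusion $\supseteq$.} By hypothesis each $\rho_Q$ lies in $\Omega_j$. By Theorem~\ref{Thm:SubshiftPartialLimit}, $\Omega_j$ is a limit in $\Jj$, hence a closed $\Gamma$-invariant set. Therefore $\overline{\Orb(\rho_Q)}\subseteq\Omega_j$ for every $Q\in\Qq_j$.

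\emph{Inclusion $\subseteq$.} Let $\omega\in\Omega_j$ and fix $r\ge1$. Put $P:=\omega|_{B(r)}$; this patch is $(\Qq_j,\ell_0)$-legal.
\begin{itemize}
\item Take $L$ to be a common multiple of the periods $L_Q$ with $S^{L_Q}(\rho_Q)=\rho_Q$. This is possible since $\Qq_j$ is finite.
\item By the second description in Lemma~\ref{Lem:Qq-patches-stabilize}~(b), for every $n\in\N$ there is $Q\in\Qq_j$ with $P\prec S^{(n+n_r)\ell_0}(Q)$. Choose $n$ so that $L$ divides $m:=(n+n_r)\ell_0$; the corresponding $Q=Q_r$ may depend on $n$, which is harmless.
\item Since $Q\prec\rho_Q$, equivariance (Definition~\ref{Def:SubstitutionMap}~(b)) together with the restriction property Definition~\ref{Def:SubstitutionMap}~(c) gives $S^m(Q)\prec S^m(\rho_Q)$. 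As $L\mid m$, we have $S^m(\rho_Q)=\rho_Q$, and hence $P\prec\rho_{Q_r}$.
\end{itemize}
There are only finitely many choices of $Q_r$, so by pigeonhole one $Q$ works for an unbounded set of radii $r$. For this $Q$, there are translates $\gamma_r\rho_Q$ that agree with $\omega$ on $B(r)$ for arbitrarily large $r$. Hence $\omega\in\overline{\Orb(\rho_Q)}$.

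\emph{Primitive case.} Here I will show that only illegal $Q$ are needed, and I expect this to be the main obstacle.

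First, $\Qq_j\setminus W(S)\neq\emptyset$. Otherwise every $(\Qq_j,\ell_0)$-legal patch is $S$-legal, since $S$ maps legal patches to legal ones. Then $\Omega_j\subseteq\Omega(S)$, and minimality of $\Omega(S)$ forces equality, contrary to the assumption $\Omega_j\neq\Omega(S)$.

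Next, let $\omega\in\Omega_j$ and split into two cases.
\begin{itemize}
\item Suppose $\omega\notin\Omega(S)$. Then $\omega$ contains an illegal patch, so $\omega|_{B(r)}$ is illegal for all large $r$. In the argument above, $\omega|_{B(r)}\prec S^m(Q_r)$ then forces $Q_r\notin W(S)$, because $S^m$ of a legal patch is legal. So the pigeonholed $Q$ is illegal.
\item Suppose $\omega\in\Omega(S)$. Pick any $Q'\in\Qq_j\setminus W(S)$ and let $a$ be a letter occurring in $Q'$. Using the restriction property, $S^{kL}(a)\prec S^{kL}(\rho_{Q'})=\rho_{Q'}$ for all $k$. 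By primitivity, every $S$-legal patch occurs in $S^{kL}(a)$ for $k$ large. Hence $W(S)\subseteq W(\rho_{Q'})$, which gives $\Omega(S)\subseteq\overline{\Orb(\rho_{Q'})}$.
\end{itemize}
In both cases $\omega$ lies in the union over $Q\in\Qq_j\setminus W(S)$, which completes the plan.
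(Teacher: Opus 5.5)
Your proof is correct. For the main equality you follow the paper. You choose an exponent $m=(n+n_r)\ell_0$ divisible by all fixed-point periods; the paper takes $L_0=n_r\prod_Q L_Q$. You then use Lemma~\ref{Lem:Qq-patches-stabilize}(b) to get $P\prec S^m(Q)\prec S^m(\rho_Q)=\rho_Q$. Your pigeonhole over radii spells out the step the paper compresses into ``it suffices to show''.

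The primitive refinement is where you genuinely differ. The paper re-selects, for each legal $P\in\Qq_j\cap W(S)$, a fixed point $\rho_P$ inside $\Omega(S)$. It then absorbs $\overline{\Orb(\rho_P)}$ into $\overline{\Orb(\rho_Q)}$ for an illegal $Q$, using $W(S)\subseteq W(\rho_Q)$. You instead keep the given family and split according to whether $\omega\in\Omega(S)$. When $\omega\notin\Omega(S)$, illegality pulls back: $P\prec S^m(Q)$ with $P$ illegal forces $Q$ illegal, so the pigeonholed $Q$ is automatically illegal. The case $\omega\in\Omega(S)$ rests on the same key fact $W(S)\subseteq W(\rho_{Q'})$ as the paper.

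Your route buys a little more:
\begin{enumerate}[(i)]
\item it proves the reduced union for the originally given $\rho_Q$'s;
\item it does not need to produce an $S$-fixed point in $\Omega(S)$ through each legal $P$, as the paper's re-selection does;
\item it explicitly checks $\Qq_j\setminus W(S)\neq\emptyset$, which the paper leaves implicit but needs, since otherwise the union would be empty.
\end{enumerate}
The paper's route is shorter.

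One small remark: for that non-emptiness you invoke minimality of $\Omega(S)$, which the paper only cites in the general group setting. You can avoid it. Your own primitivity argument, applied to a letter of any $Q\in\Qq_j$, gives $W(S)\subseteq W(\Qq_j,\ell_0)$, hence $\Omega(S)\subseteq\Omega_j$ directly. Combined with $\Omega_j\subseteq\Omega(S)$ when $\Qq_j\subseteq W(S)$, this gives the contradiction.
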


\begin{proof}
Since $\rho_Q \in \Omega_j$ and $\Omega_j$ is a subshift, we conclude $\overline{\Orb(\rho_Q)} \subseteq \Omega_j$ for each $Q \in \Qq_j$.
For the converse inclusion it suffices to show that for each $r\ge 1$ and each $P \in W(\Omega_j)_{B(r)}$, there exists $Q \in \Qq_j$ such that $P \prec \rho_Q$.

Let $r\ge 1$ and $P \in W(\Omega_j)_{B(r)}=W(\Qq_j,\ell_0)_{B(r)}$ where the equality follows from Theorem~\ref{Thm:SubshiftPartialLimit}.
By Lemma~\ref{Lem:Qq-patches-stabilize}~(a) and Lemma~\ref{Lem:DictBallStabilizeGeneral} there exists $n_r \in \N$ such that for all $n \in\N_0$
there exists $Q_n \in \Qq_j$ with $P \prec S^{(n+n_r)\ell_0}(Q_n)$.

Choose $\rho_Q\in \Omega_j$ and $L_Q\in\N$ for each $Q \in \Qq_j$ such that $Q\prec \rho_Q$ and $S^{L_Q}(\rho_Q)=\rho_Q$.
Then
\[
S^{L_0 \ell_0}(\rho_Q) = \rho_Q
\quad \text{for all } Q \in \Qq_j \text{ and } L_0 := n_r \cdot \prod_{Q \in \Qq_j} L_Q.
\]

Since $L_0 \ge n_r$, the previous considerations imply that there exists $Q=Q_{L_0-n_r} \in \Qq_j$ such that
\[
P \prec S^{L_0 \ell_0}(Q) 
	\prec S^{L_0 \ell_0}(\rho_Q) 
	= \rho_Q.
\]
Suppose now that $S$ is additionally primitive. Primitivity together with the fact that for $Q\in \Qq_j\setminus W(S)$, $\rho_Q$ is an $S$-fixed point implies that $W(S)\subseteq W(\rho_Q)$. Thus, for $P\in\Qq_j\cap W(S)$, we can choose $\rho_P\in\Omega(S)$ such that $\overline{\Orb(\rho_P)}\subseteq \overline{\Orb(\rho_Q)}$  for any $Q\in\Qq_j\setminus W(S)$.
\end{proof}

In Proposition~\ref{Prop:LimitSubshifts_FiniteUnion}, we assumed the existence of $S$-fixed points $\rho_Q\in\Omega_j$ for $Q\in\Qq_j$. We now provide a sufficient condition ensuring this.
To this end, the map
\[
\Psi:\Aa^T\to\Aa^T,\qquad \Psi(P):=S(P)\big|_T,
\]
plays a central role. A patch $Q\in\Aa^T$ is called \emph{$T$-periodic} (with respect to $S$) if there exists a $p\in \N$ such that $\Psi^p(Q)=Q$.
For $Q\in\Aa^T$, we call $p\in\N$ the \emph{minimal $T$-period} of $Q$ if $p$ is the
smallest integer $n\in\N$ such that $\Psi^{n}(Q)=Q$.

\begin{lemma}\label{Lem:Psi_and_S}
Let $S$ be a substitution map on $\Aa^\Gamma$ with a convenient testing domain $T$.
For all $n\in\N$ and all $Q\in\Aa^T$, we have
$\Psi^n(Q)=S^n(Q)\big|_T$.
\end{lemma}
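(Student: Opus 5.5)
The plan is to argue by induction on $n$. The case $n=1$ is simply the definition of $\Psi$. The only real input is the compatibility condition (c) of Definition~\ref{Def:SubstitutionMap}, together with the normalization $\gamma(e)=e$ in Definition~\ref{Def:TestingDomain}~\eqref{enu:NT_simplified}. That normalization gives $T\subseteq F(1,T)$, so $\Psi$ is well defined in the first place: $S(Q)$ is actually defined on $T$. It also gives, inductively, $T\subseteq F(1,T)\subseteq F(n,T)$.

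For the induction step, assume $\Psi^n(Q)=S^n(Q)|_T$. Then
\[
\Psi^{n+1}(Q)=\Psi\bigl(S^n(Q)|_T\bigr)=S\bigl(S^n(Q)|_T\bigr)\big|_T .
\]
Now apply Definition~\ref{Def:SubstitutionMap}~(c) to the patch $P:=S^n(Q)$, the finite set $M:=T\subseteq\supp(P)=F(n,T)$, and exponent $1$. This gives
\[
S\bigl(S^n(Q)\bigr)\big|_{F(1,T)}=S\bigl(S^n(Q)|_T\bigr).
\]
Restricting both sides to $T\subseteq F(1,T)$ yields
\[
S\bigl(S^n(Q)|_T\bigr)\big|_T=S^{n+1}(Q)|_T ,
\]
which closes the induction.

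I expect the main obstacle to be purely bookkeeping. One has to check that $T\subseteq \supp(S^n(Q))$, so that the restriction is legitimate, and that $F(1,M)$ in condition (c) means exactly the support of $S(P|_M)$. If one prefers to avoid using (c) on a non-configuration patch, there is an alternative: extend $Q$ to some $\omega\in\Aa^\Gamma$ with $\omega|_T=Q$ and apply (c) to $\omega$ twice. First compare $S^n(\omega)|_{F(n,T)}=S^n(Q)$. Then use (c) with $P=S^n(\omega)$ and $M=T$ to get $S^{n+1}(\omega)|_T=S\bigl(S^n(\omega)|_T\bigr)|_T$. Combining these gives the same conclusion.
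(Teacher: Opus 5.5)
Your proposal is correct and takes essentially the same route as the paper: induction on $n$, applying condition (c) of Definition~\ref{Def:SubstitutionMap} to $S^n(Q)$ with $M=T$, and then restricting to $T\subseteq F(1,T)$, which follows from the normalization $\gamma(e)=e$ in Definition~\ref{Def:TestingDomain}~\eqref{enu:NT_simplified}. Your explicit check that $T\subseteq F(n,T)$, via monotonicity of $M\mapsto F(1,M)$, fills in a bookkeeping detail the paper leaves implicit.
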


\begin{proof}
The case $n=1$ holds by definition. Assume $\Psi^n(Q)=S^n(Q)|_T$ for some $n\in\N$. Thus, Definition~\ref{Def:SubstitutionMap} yields
\[
\Psi^{n+1}(Q)=\Psi(\Psi^n(Q))=S\bigl(S^n(Q)|_T\bigr)\big|_T
=
S^{n+1}(Q)\big|_{T\cap F(1,T)}.
\]
Since $T\subseteq F(1,T)$ by Definition~\ref{Def:TestingDomain}~\eqref{enu:NT_simplified}, the induction is completed.
\end{proof}

\begin{proposition}
\label{Prop:SuffCond_Q-fixed_point}
Let $S$ be a substitution on $\Aa^\Gamma$ with convenient testing domain $T$, and let $Q\in\Aa^T$ be $T$-periodic with $\Psi^p(Q)=Q$ for some $p\in\N$. 
Then, for every $\rho\in\Aa^\Gamma$ with $\rho|_T=Q$, the limit
\[
\rho_Q := \lim_{n\to\infty} S^{np}(\rho)
\]
exists in $\Aa^\Gamma$ and is independent of the choice of $\rho$. Moreover, $\rho_Q|_T=Q$ and $S^p(\rho_Q)=\rho_Q$.
\end{proposition}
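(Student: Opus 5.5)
The plan is to show that the supports $F(np,T)$ exhaust $\Gamma$ and that the patches $S^{np}(\rho)|_{F(np,T)}$ form a nested, consistent sequence determined by $Q$ alone. Convergence in the product topology and independence of $\rho$ both follow from this.

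\emph{Step 1: consistency of the iterates on the expanding supports.} By Definition~\ref{Def:SubstitutionMap}(c) applied with $M=T$, we have
\[
S^{np}(\rho)\big|_{F(np,T)} = S^{np}(\rho|_T) = S^{np}(Q),
\]
and the right-hand side depends only on $Q$. Next I show nestedness: $F(np,T)\subseteq F((n+1)p,T)$ and $S^{(n+1)p}(Q)|_{F(np,T)}=S^{np}(Q)$. By Lemma~\ref{Lem:Psi_and_S}, $S^p(Q)|_T=\Psi^p(Q)=Q$. The inclusion $T\subseteq F(1,T)$ from Definition~\ref{Def:TestingDomain}\eqref{enu:NT_simplified}, with $\gamma(e)=e$, iterates to $T\subseteq F(p,T)$. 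Applying Definition~\ref{Def:SubstitutionMap}(c) to the patch $P=S^p(Q)$ and $M=T$ gives
\[
S^{np}(S^p(Q))\big|_{F(np,T)} = S^{np}\bigl(S^p(Q)|_T\bigr) = S^{np}(Q).
\]
This yields both the support inclusion and the restriction identity.

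\emph{Step 2: exhaustion.} By Definition~\ref{Def:TestingDomain}\eqref{enu:TestingDomain_simplified} with $x=e$ and $\gamma(n,e)=e$, for every $r$ there is $n_r$ with $B(e,r)\subseteq F(n,T)$ for all $n\ge n_r$. In particular this holds along $n=mp$ for large $m$, so $\bigcup_n F(np,T)=\Gamma$.

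\emph{Step 3: conclusion.} Steps 1 and 2 together say that, on each ball $B(e,r)$, the configurations $S^{np}(\rho)$ eventually agree with $S^{mp}(Q)$ for all large $n$, independently of $\rho$. Hence the limit $\rho_Q$ exists in $\Aa^\Gamma$, is the union of the nested patches $S^{np}(Q)$, and does not depend on $\rho$. Taking $n=0$ gives $\rho_Q|_T=Q$. For the fixed point property, $S^p$ is continuous on $\Aa^\Gamma$ by Proposition~\ref{Prop:Exist_SubstMap}, so
\[
S^p(\rho_Q)=\lim_n S^{(n+1)p}(\rho)=\rho_Q.
\]
Alternatively, $S^p(\rho_Q)$ restricted to $F(np,T)$ equals $S^p(S^{np}(Q))$ restricted there, which is again $S^{np}(Q)$ by the nesting.

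The main obstacle is the nesting step. I need $F(n,T)\subseteq F(n,F(p,T))$ and the compatibility $S^{n}(S^p(Q))|_{F(n,T)}=S^n(Q)$, and these must be read off carefully from Definition~\ref{Def:SubstitutionMap}(c), together with $F(n,\cdot)$ being monotone in the set. Monotonicity is clear from the letterwise description $F(1,M)=\bigsqcup_{\eta\in M}D(\eta)F$ in Proposition~\ref{Prop:Exist_SubstMap}, iterated.
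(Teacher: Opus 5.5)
Your proposal is correct and follows essentially the same route as the paper. Definition~\ref{Def:SubstitutionMap}(c) reduces $S^{np}(\rho)$ on $F(np,T)$ to iterates of $Q$. Lemma~\ref{Lem:Psi_and_S} with $\Psi^p(Q)=Q$ gives compatibility of successive iterates. Definition~\ref{Def:TestingDomain}~\eqref{enu:TestingDomain_simplified} with $\gamma(n,e)=e$ gives exhaustion, and continuity of $S$ gives $S^p(\rho_Q)=\rho_Q$.

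The only difference is bookkeeping. You phrase stabilization as nestedness of the patches $S^{np}(Q)$, which makes independence of $\rho$ explicit. The paper instead shows $S^{np}(\rho)|_T=Q$ for all $n$ and then compares $S^{(n_r+n)p}(\rho)$ with $S^{n_rp}(\rho)$ on $B(e,r)$.
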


\begin{proof}
Let $\rho_n:=S^{np}(\rho)$. By Lemma~\ref{Lem:Psi_and_S},
\[
\rho_n|_T
= S^{np}(\rho)\big|_{T\cap F(np,T)}
= S^{np}(\rho|_T)\big|_T
= S^{np}(Q)\big|_T
= \Psi^{np}(Q)
= Q
= \rho|_T.
\]
Fix $r\ge 1$. Since $T$ is a convenient testing domain (Definition~\ref{Def:TestingDomain}~\eqref{enu:TestingDomain_simplified}), there exists an $n_r\in\N$ such that $B(r):=B(e,r)\cap\Gamma\subseteq F(n_r,T)$.
Thus, we obtain for all $n\in\N$
\[
\rho_{n_r+n}\big|_{B(r)}
	= \big(S^{n_rp}(\rho_n)\big)\big|_{B(r)\cap F(n_rp,T)} 
	= \big(S^{n_rp}(\rho_n|_T)\big)\big|_{B(r)} 
	= \big(S^{n_rp}(\rho|_T)\big)\big|_{B(r)}
	= \rho_{n_r}\big|_{B(r)} .
\]
Since $B(r)$ exhausts $\Gamma$, this shows that $(\rho_n)_{n\in\N}$ is a Cauchy sequence in the product topology of $\Aa^\Gamma$ and hence converges to a unique configuration $\rho_Q\in\Aa^\Gamma$ with $\rho_Q|_T=Q$.

Finally, continuity of $S$ on $\Aa^\Gamma$ (Definition~\ref{Def:SubstitutionMap}) gives
\[
S^p(\rho_Q)
	= \lim_{n\to\infty} S^p(\rho_n)
	= \lim_{n\to\infty}\rho_{n+1}
	= \rho_Q.
	\hfill\qedhere
\]
\end{proof}

In the following, $\rho_Q\in\Aa^\Gamma$ denotes the $S$-fixed point associated with a $T$-periodic patch $Q$, as obtained in Proposition~\ref{Prop:SuffCond_Q-fixed_point}.

\begin{proposition}
\label{Prop:Tperiodic_Inclusion}
Let $S$ be a substitution map on $\Aa^\Gamma$ with a convenient testing domain $T$.
Let $P,Q\in\Aa^T$ be $T$-periodic patches, and suppose that $P \prec \rho_Q$.
Then
\[
\overline{\Orb(\rho_P)} \subseteq \overline{\Orb(\rho_Q)}.
\]
\end{proposition}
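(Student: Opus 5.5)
Since $\overline{\Orb(\rho_Q)}$ is closed and $\Gamma$-invariant, it suffices to show $W(\rho_P)\subseteq W(\rho_Q)$. Indeed, if every finite patch of $\rho_P$ occurs in $\rho_Q$, then for each $r$ the ball patch $\rho_P|_{B(e,r)}$ equals $(\gamma_r\rho_Q)|_{B(e,r)}$ for some $\gamma_r\in\Gamma$. Hence $\gamma_r\rho_Q\to\rho_P$ in $\Aa^\Gamma$, so $\rho_P\in\overline{\Orb(\rho_Q)}$, and the inclusion of orbit closures follows by invariance and closedness.

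To prove $W(\rho_P)\subseteq W(\rho_Q)$, let $p,q\in\N$ satisfy $\Psi^p(P)=P$ and $\Psi^q(Q)=Q$, and put $L:=pq$. By Proposition~\ref{Prop:SuffCond_Q-fixed_point} we have $S^L(\rho_P)=\rho_P$ and $S^L(\rho_Q)=\rho_Q$. Fix $r\ge1$. The proof of Proposition~\ref{Prop:SuffCond_Q-fixed_point} shows that $\rho_P|_{B(r)}$ is determined by $\rho_P|_T=P$ after enough substitution steps. Concretely, by Definition~\ref{Def:TestingDomain}~\eqref{enu:TestingDomain_simplified} with $\gamma(n,e)=e$, there is $n$ with $B(r)\subseteq F(nL,T)$. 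By Definition~\ref{Def:SubstitutionMap}(c),
\[
\rho_P|_{B(r)}=S^{nL}(\rho_P)|_{B(r)}=\bigl(S^{nL}(\rho_P|_T)\bigr)\big|_{B(r)}=S^{nL}(P)|_{B(r)},
\]
so $\rho_P|_{B(r)}\prec S^{nL}(P)$.

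Now use $P\prec\rho_Q$. There is $\gamma\in\Gamma$ with $\gamma P=\rho_Q|_{\gamma T}$, and then the equivariance in Definition~\ref{Def:SubstitutionMap}(b),(c) gives
\[
S^{nL}(\gamma P)=D^{nL}(\gamma)\,S^{nL}(P)=S^{nL}(\rho_Q)|_{F(nL,\gamma T)}=\rho_Q|_{F(nL,\gamma T)}.
\]
Hence $S^{nL}(P)\prec\rho_Q$, and by transitivity of $\prec$ we get $\rho_P|_{B(r)}\prec\rho_Q$. Every finite patch of $\rho_P$ sits inside some ball patch $\rho_P|_{B(r)}$, so $W(\rho_P)\subseteq W(\rho_Q)$.

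The only delicate point is bookkeeping. We must check that translating a patch commutes with $S^{nL}$ up to $D^{nL}(\gamma)$, and that condition~(c) applies to the finite subset $\gamma T\subseteq\Gamma=\supp(\rho_Q)$. Both follow from iterating (b) and from (c) directly. We must also choose the common period $L$ so that both fixed-point identities hold at once; $L=pq$ does this.
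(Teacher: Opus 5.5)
Your proof is correct and is essentially the paper's argument written at the level of patches. The paper applies Proposition~\ref{Prop:SuffCond_Q-fixed_point} directly to the translate of $\rho_Q$ that carries $P$ on $T$, obtaining $\rho_P$ as the limit of $S^{npq}$ applied to it, and then concludes because $\overline{\Orb(\rho_Q)}$ is closed and $S^q$-invariant; your chain $\rho_P|_{B(r)}\prec S^{nL}(P)\prec\rho_Q$ checks by hand that the same translates $D^{nL}(\gamma)^{-1}\rho_Q$ converge to $\rho_P$.
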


\begin{proof}
Let $p,q\in\N$ be such that $\Psi^p(P)=P$ and $\Psi^q(Q)=Q$.
Since $P\prec\rho_Q$, there exists $\gamma\in\Gamma$ with $P = (\gamma\rho_Q)\big|_T$.
By Proposition~\ref{Prop:SuffCond_Q-fixed_point}, the limit
\[
\rho_P := \lim_{n\to\infty} S^{npq}(\gamma\rho_Q)
\]
exists and defines the $S$-fixed point associated with $P$.
Since $S^q(\rho_Q)=\rho_Q$, the orbit closure $\overline{\Orb(\rho_Q)}$ is
$S^q$-invariant.
As $\gamma\rho_Q\in\overline{\Orb(\rho_Q)}$ and $\overline{\Orb(\rho_Q)}$ is
closed, it follows that $\rho_P\in\overline{\Orb(\rho_Q)}$.
Taking orbit closures of $\rho_P$ yields the claim.
\end{proof}

The following example shows that, in general, a substitution does not need to admit such fixed points for all $T$-patches.

\begin{example}
\label{Ex:Subst_NoFixedPoints}
Consider the substitution map $S$ on $\Aa^{\Z^2}$ over the alphabet $\Aa:=\{\letterbox{r},\letterbox{g},\letterbox{b}\}$ defined by the block substitution rule $S_0:\Aa\to\Aa^F$ with $F:=\{0,1,2\}^2$ and
\[
\letterbox{r}\mapsto \blocknine{r}{r}{r}{r}{r}{g}{r}{r}{r},\quad
\letterbox{g}\mapsto \blocknine{r}{r}{r}{b}{r}{r}{r}{r}{r},\quad
\letterbox{b}\mapsto \blocknine{r}{r}{r}{g}{r}{b}{r}{r}{r}.
\]
Then $T=\{-1,0\}^2$ is a convenient testing domain of $S$, see \cite{BaBePoTe24}. From the structure of $S_0$ it follows that $\Psi^n(P)=\block{r}{r}{r}{r}$ for all $n\in\N$ and all $P\in\Aa^T$. Hence, $S$ has a unique fixed point $\rho=\lim\limits_{n\to\infty} S^n(\omega)$ for each $\omega\in\Aa^{\Z^2}$ and $\rho\in\Omega(S)$.

However, if $\omega_0\in\Aa^{\Z^2}$ contains the patch $\hblock{b}{b}$, then $(\overline{\Orb(S^n(\omega_0)})_{n\in\N}$ admits a limit point $\Omega\neq\Omega(S)$. Thus, this limit point cannot be represented as a union of orbit closures of fixed points. The example is a straightforward extension of the substitution given in \cite[Eq.~(2.2)]{Ten24}.
\end{example}

This should be contrasted with \cite[Proposition~5.15]{BSTY19}, which provides a characterization of limit configurations of $S$ in one dimension. 
A limit configuration of $S$ is a configuration of the form $\omega=\lim_{n\to\infty} S^n(\rho_n)$ for some sequence $(\rho_n)_n$. 
Every $S$-fixed point is, in particular, a limit configuration of $S$.

%%%%%%%%%%%%%%%%%%%%%%%%%%%%%%%%%%%%%%%%%%%%%%%%%%%%%%%%%%%%%%%%%%%%%%%%%%%%%%%%%%%%%%%%%%%%%%%%%%%%%%%%%%%%%%%%%%%%%%%%%%%%%%%%%%%%%%%%%%%%%%%%
\section{Two classes of substitutions}
\label{Sec:TwoClasses}

Combining Proposition~\ref{Prop:LimitSubshifts_FiniteUnion} and
Proposition~\ref{Prop:SuffCond_Q-fixed_point}, we consider two classes of
substitutions generalizing the table tiling and the chair tiling.
Both classes share the property that the map
\[
\Psi:\Aa^T\to\Aa^T, \qquad \Psi(P):=S(P)\vert_T,
\]
when restricted to the sets $\Qq_j\setminus W(S) \subseteq\Aa^T$, is bijective where $\Qq_j$ was defined in Definition~\ref{Def:Q_j}.

\begin{lemma}
\label{Lem:SuffCond_Q-fixed_point}
Let $S$ be a substitution map on $\Aa^\Gamma$ with a convenient testing domain $T$.
If for a finite set $\Qq\subseteq\Aa^T$ the restriction
\[
\Psi:\Qq\to\Aa^T, \qquad \Psi(Q)=S(Q)\vert_T,
\]
is bijective onto the image $\Qq$, then for every $Q\in\Qq$ there exists a $p\in\N$ such that $\Psi^{p}(Q)=Q$.
\end{lemma}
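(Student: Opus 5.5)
The statement is a finite-set permutation argument. The hypothesis says that $\Psi$ restricted to $\Qq$ maps $\Qq$ bijectively onto $\Qq$. So $\Psi|_{\Qq}$ is a permutation of the finite set $\Qq$, and every element of a finite set lies on a cycle of any permutation of it. No further property of $S$ or of the convenient testing domain $T$ is needed. The only role of $T$ is to make $\Psi(Q)=S(Q)|_T$ a well-defined element of $\Aa^T$; this uses $T\subseteq F(1,T)$ from Definition~\ref{Def:TestingDomain}~\eqref{enu:NT_simplified}, as already exploited in Lemma~\ref{Lem:Psi_and_S}.

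The steps are as follows.

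First, I record that $\sigma:=\Psi|_{\Qq}:\Qq\to\Qq$ is a bijection, so the iterates $\sigma^n$ for $n\in\N_0$ are bijections of $\Qq$. These iterates coincide with $\Psi^n$ on $\Qq$, because $\Psi$ maps $\Qq$ into $\Qq$.

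Second, I fix $Q\in\Qq$ and consider the sequence $Q,\Psi(Q),\Psi^2(Q),\dots$, which lies in $\Qq$. Since $\Qq$ is finite, the pigeonhole principle gives $0\le m<n$ with $\Psi^m(Q)=\Psi^n(Q)$.

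Third, I apply the inverse bijection $\sigma^{-m}$ to both sides. Injectivity is the essential point here: it lets us cancel $\Psi^m$ and obtain $Q=\Psi^{n-m}(Q)$. Setting $p:=n-m\ge 1$ gives the claim. Alternatively, one can note that $\sigma$ is an element of the finite symmetric group on $\Qq$, so it has finite order $k$. Then $\Psi^k(Q)=Q$ for all $Q\in\Qq$ simultaneously, with a uniform $p=k\le(\#\Qq)!$.

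There is no real obstacle. The only point needing care is reading the hypothesis correctly: the image of $\Qq$ under $\Psi$ must be contained in $\Qq$, so that iterates stay in $\Qq$, and not merely be some subset of $\Aa^T$ of the same cardinality. I would make this reading explicit at the start. I would also remark that the uniform period can feed directly into Proposition~\ref{Prop:SuffCond_Q-fixed_point} to produce the fixed points $\rho_Q$.
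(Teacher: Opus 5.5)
Your proposal is correct and follows the paper's proof: both note that $\Psi$ restricted to the finite set $\Qq$ is a permutation of $\Qq$, so every $Q\in\Qq$ is periodic under $\Psi$. Your pigeonhole-plus-injectivity cancellation, and the alternative uniform period given by the order of that permutation, just spell out the one-line argument the paper gives.
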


\begin{proof}
Since $\Qq$ is finite and $\Psi:\Qq\to\Qq$ is bijective, $\Psi$ acts as a
permutation on $\Qq$.
Hence, every $Q\in\Qq$ is periodic under iteration of $\Psi$.
\end{proof}

%%%%%%%%%%%%%%%%%%%%%%%%%%%%%%%%%%%%%%%
\subsection{$T$-bijective substitutions}
\label{Subsec:T_bijective}
%%%%%%%%%%%%%%%%%%%%%%%%%%%%%%%%%%%%%%%

\begin{definition}
\label{Def:T-bjective}
Let $S$ be a substitution map on $\Aa^\Gamma$ with a convenient testing domain $T$.
Then $S$ is called \emph{$T$-bijective} if the map
\[
\Psi:\Aa^T\to\Aa^T, \qquad
\Psi(P) := S(P)\big|_T
\]
is bijective.
\end{definition}

For $\Gamma=\Z$ and $T=\{-1,0\}$, $T$-bijective substitutions are also called marked substitutions \cite{Fri99}.
The table tiling substitution from Section~\ref{Subsec:TableTiling} is $T$-bijective by Lemma~\ref{Lem:SuffCond_T-bijective} below.

\begin{proposition}
\label{Prop:T-bijective_FixedPoints}
Let $S$ be a substitution map on $\Aa^\Gamma$ with a convenient testing domain $T$. Let $\omega_0\in\Aa^\Gamma$ with associated patch sets $\{\Qq_j\}_{j=0}^{\ell_0-1}$ and limiting subshifts $\{\Omega_j\}_{j=0}^{\ell_0-1}$ as in Theorem~\ref{Thm:SubshiftPartialLimit}. If $S$ is $T$-bijective, then there exists for each $j\in\{0,\ldots,\ell_0-1\}$ and $Q\in\Qq_j$, a $\rho_Q\in\Omega_j$ such that
\[
\Omega_j
=
\bigcup_{Q\in\Qq_j} \overline{\Orb(\rho_Q)}.
\]
If $S$ is additionally primitive and $\Omega_j\neq \Omega(S)$, then we can replace $\Qq_j$ by $\Qq_j\setminus W(S)$.
\end{proposition}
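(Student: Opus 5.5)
The plan is to reduce everything to Proposition~\ref{Prop:LimitSubshifts_FiniteUnion}. That result requires, for each $Q\in\Qq_j$, an $S$-fixed point $\rho_Q\in\Omega_j$ with $Q\prec\rho_Q$. I will construct these fixed points from Proposition~\ref{Prop:SuffCond_Q-fixed_point}. That proposition needs $Q$ to be $T$-periodic, and $T$-bijectivity supplies exactly this: $\Psi$ is a bijection of the finite set $\Aa^T$, hence a permutation, so every $Q\in\Aa^T$ satisfies $\Psi^{p}(Q)=Q$ for some $p\in\N$. This is the argument of Lemma~\ref{Lem:SuffCond_Q-fixed_point} with $\Qq=\Aa^T$. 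So for each $Q\in\Qq_j$ we obtain $\rho_Q=\lim_n S^{np}(\rho)$, independent of $\rho$ with $\rho|_T=Q$, satisfying $\rho_Q|_T=Q$ (so $Q\prec\rho_Q$) and $S^p(\rho_Q)=\rho_Q$.

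The main step is to show $\rho_Q\in\Omega_j$, i.e.\ that $\rho_Q$ is $(\Qq_j,\ell_0)$-legal. First I replace $p$ by $p\ell_0$, which is still a period since $\Psi^{p\ell_0}(Q)=Q$. Then $\rho_Q=\lim_n S^{np\ell_0}(\rho)$ for any $\rho$ with $\rho|_T=Q$, and $S^{p\ell_0}(\rho_Q)=\rho_Q$.

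Now take a finite patch $P\prec\rho_Q$. It is contained in some ball, and after translating I may assume $P=\rho_Q|_M$ with $M\subseteq x B(e,r)$. I will use convenient testing domain condition (a) with the choice $\gamma(n,e)=e$. As in the proof of Proposition~\ref{Prop:SuffCond_Q-fixed_point}, for a large multiple $n$ of $p\ell_0$ we have $B(e,R)\subseteq F(n,T)$ for a radius $R$ large enough that $xB(e,r)\subseteq B(e,R)$. Definition~\ref{Def:SubstitutionMap}(c) then gives
\[
\rho_Q|_{F(n,T)}=S^{n}(\rho_Q)|_{F(n,T)}=S^n(\rho_Q|_T)=S^n(Q).
\]
Hence $P\prec S^{n}(Q)$ with $n=mp\ell_0$, so $P\prec S^{(mp)\ell_0}(Q)$ with $Q\in\Qq_j$. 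By Definition~\ref{Def:Qlegal} this means $P\in W(\Qq_j,\ell_0)$, and therefore $\rho_Q\in\Omega_j$ by Theorem~\ref{Thm:SubshiftPartialLimit}.

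The only care needed is matching the iteration exponent to a multiple of $\ell_0$, and I expect this to be the main (minor) obstacle. It is handled by taking the period to be $p\ell_0$ and choosing $n$ large, which is allowed since condition (a) holds for all $n\ge n_r$.

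With these fixed points in hand, Proposition~\ref{Prop:LimitSubshifts_FiniteUnion} yields $\Omega_j=\bigcup_{Q\in\Qq_j}\overline{\Orb(\rho_Q)}$. Its final clause gives the replacement of $\Qq_j$ by $\Qq_j\setminus W(S)$ when $S$ is primitive and $\Omega_j\neq\Omega(S)$. That clause needs $\Qq_j\setminus W(S)\neq\emptyset$, and this holds: if $\Qq_j\subseteq W(S)$, then $W(\Qq_j,\ell_0)\subseteq W(S)$ and so $\Omega_j\subseteq\Omega(S)$. Moreover, each $\Omega_j$ contains a fixed point, which by primitivity contains all of $W(S)$. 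So equality $\Omega_j=\Omega(S)$ would hold, contradicting the hypothesis.
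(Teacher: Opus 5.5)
Your proposal is correct and follows the paper's route: the paper proves the statement directly from Lemma~\ref{Lem:SuffCond_Q-fixed_point}, Proposition~\ref{Prop:SuffCond_Q-fixed_point} and Proposition~\ref{Prop:LimitSubshifts_FiniteUnion}. You also verify explicitly that $\rho_Q\in\Omega_j$ (via $\rho_Q|_{F(n,T)}=S^n(Q)$ for $n$ a large multiple of $p\ell_0$) and that $\Qq_j\setminus W(S)\neq\emptyset$, which are hypotheses the paper's one-line proof leaves implicit.
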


\begin{proof}
This is an immediate consequence of Lemma~\ref{Lem:SuffCond_Q-fixed_point}, Proposition~\ref{Prop:SuffCond_Q-fixed_point} and Proposition~\ref{Prop:LimitSubshifts_FiniteUnion}.
\end{proof}

Recall from Proposition~\ref{Prop:Exist_SubstMap} that the substitution map is uniquely determined by
\begin{equation}\label{Eq:SubstitutionMap}
S:\Pat\to\Pat,
\qquad
S(P)(\gamma)
=
S_0\bigl(P(\eta_\gamma)\bigr)\bigl(D(\eta_\gamma)^{-1}\gamma\bigr),
\quad \gamma\in F\bigl(1,\supp(P)\bigr).
\end{equation}
Here $S_0:\Aa\to\Aa^F$ is the substitution rule, $D:\Gamma\to\Gamma$ is a group homomorphism satisfying \eqref{Eq:FundCell_D-F}, and for each $\gamma\in\Gamma$, the element $\eta_\gamma\in\Gamma$ is uniquely determined by the condition $\gamma\in D(\eta_\gamma)F$.

\begin{lemma}
\label{Lem:SuffCond_T-bijective}
Let $S$ be a substitution map on $\Aa^\Gamma$ with a convenient testing domain $T$.
If for each $t\in T$ the map
\[
\Aa \ni a \longmapsto S_0(a)\bigl(D(\gamma_t)^{-1}t\bigr)
\]
is bijective, then $S$ is $T$-bijective.
In particular, if for every $x\in F$ the map
\[
\Aa \ni a \longmapsto S_0(a)(x)\in\Aa
\]
is bijective, then $S$ is $T$-bijective.
\end{lemma}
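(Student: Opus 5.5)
The key is to understand $\Psi$ coordinatewise. By Lemma~\ref{Lem:Psi_and_S}, or directly from \eqref{Eq:SubstitutionMap}, for $P\in\Aa^T$ and $t\in T$ we have
\[
\Psi(P)(t)=S(P)(t)=S_0\bigl(P(\eta_t)\bigr)\bigl(D(\eta_t)^{-1}t\bigr),
\]
where $\eta_t$ is the unique element with $t\in D(\eta_t)F$. This is well defined because $T\subseteq F(1,T)$ by Definition~\ref{Def:TestingDomain}~\eqref{enu:NT_simplified} with $\gamma(e)=e$, so $\eta_t\in T$. I read $\gamma_t$ in the statement as this $\eta_t$. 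Hence the value of $\Psi(P)$ at $t$ depends only on the single letter $P(\eta_t)$, through the map $f_t:a\mapsto S_0(a)(D(\eta_t)^{-1}t)$.

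The first step is to show that $t\mapsto\eta_t$ is a bijection $T\to T$. It maps $T$ into $T$, so since $T$ is finite it suffices to prove injectivity. Suppose $\eta_t=\eta_s=\eta$ with $t\neq s$. Then $t,s\in D(\eta)F\cap T$. This is the point where I expect the main obstacle: in general one position $\eta\in T$ could feed two positions of $T$. I see two ways to handle it.

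\begin{enumerate}
\item First try to use the convenient testing domain structure: $T\subseteq F(1,T)=\bigsqcup_{\eta\in T}D(\eta)F$. If each tile $D(\eta)F$ meets $T$ in at most one point, then $\eta$ is injective. This holds for $T=\{-1,0\}^2$ and $D=2\cdot$, $F=\{0,1\}^2$, since $-1\in 2(-1)+\{0,1\}$ and $0\in 2\cdot 0+\{0,1\}$ lie in different tiles.
\item If that fails, argue via $\Psi$ directly instead of through injectivity of $\eta$. Each $f_t$ is bijective, so $P\mapsto\Psi(P)$ is injective exactly when the family $(P(\eta_t))_{t\in T}$ determines $P$, that is, when $t\mapsto\eta_t$ is surjective onto $T$. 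Surjectivity follows by a counting argument if the fibres have size one, and the two conditions are equivalent since $T\to T$ is a self-map of a finite set. So I need surjectivity of $t\mapsto\eta_t$ from the testing domain axioms. I would justify it by noting that $\Psi^n(Q)=S^n(Q)|_T$ must eventually depend on all of $Q$ when $T$ sits inside each $F(n,T)$. More honestly, I expect the intended argument relies on the tiles $D(\eta)F$, $\eta\in T$, each containing exactly one element of $T$, which I would verify from the convenient testing domain property together with $\#T$ counting.
\end{enumerate}

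Once $t\mapsto\eta_t$ is a permutation $\pi$ of $T$, we can write $\Psi(P)=\bigl(f_t(P(\pi(t)))\bigr)_{t\in T}$. This is a composition of the coordinate permutation induced by $\pi$ with the coordinatewise bijections $f_t$, hence a bijection of $\Aa^T$. So $S$ is $T$-bijective.

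For the "in particular" part, $D(\eta_t)^{-1}t\in F$, so the hypothesis that $a\mapsto S_0(a)(x)$ is bijective for every $x\in F$ gives bijectivity of each $f_t$. The first part then applies.
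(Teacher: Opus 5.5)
Your reduction matches the paper's. You write $\Psi(P)(t)=f_t\bigl(P(\eta_t)\bigr)$ with $f_t(a)=S_0(a)\bigl(D(\eta_t)^{-1}t\bigr)$, and your argument that $\eta_t\in T$ via $T\subseteq F(1,T)=\bigsqcup_{\eta\in T}D(\eta)F$ is correct. You also isolate the exact condition needed: since each $f_t$ is bijective, $\Psi$ is bijective if and only if $t\mapsto\eta_t$ is a permutation of $T$ (when $\#\Aa\ge 2$).

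\textbf{The gap.} You never establish that permutation property, and your option~2 cannot establish it, because it is false in general. Neither ``$\#T$ counting'' nor the claim that $\Psi^n(Q)$ must eventually depend on all of $Q$ because $T\subseteq F(n,T)$ works. Here is a counterexample:
\begin{itemize}
\item Take $\Gamma=\Z$, $F=\{0,1,2\}$, $D(x)=3x$ and $T=\{-1,0,1\}$.
\item Then $F(1,\gamma+T)=3\gamma+\{-3,\dots,5\}$ and $F(n,T)=\{-3^n,\dots,2\cdot 3^n-1\}$. Both conditions of Definition~\ref{Def:TestingDomain} hold, with $\gamma(e)=e$ and $\gamma(n,e)=e$, so $T$ is a convenient testing domain.
\item However $\eta_{-1}=-1$ and $\eta_0=\eta_1=0$. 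Hence $\Psi(P)$, and inductively every $\Psi^n(P)$, is independent of $P(1)$.
\item For the componentwise bijective rule $a\mapsto aba$, $b\mapsto bab$, every hypothesis of the lemma holds, including the ``in particular'' one. Yet $\Psi(a,a,a)=\Psi(a,a,b)=(a,a,b)$, so $S$ is not $T$-bijective.
\end{itemize}

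\textbf{What this means for the statement.} The lemma needs an extra hypothesis, namely that $t\mapsto\gamma_t$ is a bijection of $T$. The paper's own proof (``$\Psi(P)(t)$ depends bijectively on the single letter $P(\gamma_t)$ \dots\ hence $\Psi$ is bijective'') uses this tacitly, exactly as you would.

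Your option~1 supplies the assumption in the only case where the lemma is used (Corollary~\ref{Cor:Table_Tbijective}). For a block substitution with $D(x)=((k_j+1)x_j)_j$ and $T=\{-1,0\}^d$, we have $-1=(k_j+1)(-1)+k_j$ and $0=(k_j+1)\cdot 0+0$. So $\eta_t=t$ for every $t\in T$, and each tile $D(\eta)F$ meets $T$ in at most one point. Under this additional assumption your final paragraph is a complete proof, and a more careful version of the paper's argument: $\Psi$ is a coordinate permutation composed with the coordinatewise bijections $f_t$. The ``in particular'' part then follows as you say.
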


\begin{proof}
Using the evaluation identity \eqref{Eq:SubstitutionMap} from Proposition~\ref{Prop:Exist_SubstMap},
\[
S(P)(t)
=
S_0\bigl(P(\gamma_t)\bigr)\bigl(D(\gamma_t)^{-1}t\bigr),
\qquad P\in\Aa^T,\ t\in T,
\]
the value $\Psi(P)(t)=S(P)(t)$ depends bijectively on the single letter
$P(\gamma_t)\in\Aa$.
Since this holds for every $t\in T$, the map $\Psi:\Aa^T\to\Aa^T$ is bijective.

The second claim follows from the first, together with the inclusion
$\{ D(\gamma_t)^{-1}t \mid t\in T \}\subseteq F$.
\end{proof}

\begin{corollary}
\label{Cor:Table_Tbijective}
The table tiling substitution defined in Section~\ref{Subsec:TableTiling} is $T$-bijective with a convenient testing domain $T=\{-1,0\}^2$.
\end{corollary}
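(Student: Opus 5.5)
The plan is to deduce the corollary from Lemma~\ref{Lem:SuffCond_T-bijective}, whose second part reduces $T$-bijectivity to a purely local statement about the rule $S_0$. Before that, we need $T=\{-1,0\}^2$ to be a convenient testing domain for the table tiling, since this is a standing hypothesis in Definition~\ref{Def:T-bjective}.

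For the testing-domain property, take $F=\{0,1\}^2$ and $D(x,y)=(2x,2y)$. Condition~\eqref{enu:NT_simplified} of Definition~\ref{Def:TestingDomain} is checked by hand. For $x\in\Z^2$, write $x=2\eta+f$ with $f\in F$, and take $\gamma(x)=\eta$ if $x$ has both coordinates odd; otherwise shift $\eta$ appropriately in each coordinate. In one coordinate, $\{x-1,x\}$ lies in $F(1,\{g-1,g\})=\{2g-2,\dots,2g+1\}$ for a suitable $g$, with $g=0$ when $x=0$. Since everything is a product of the one-dimensional situation, the choice $\gamma(e)=e$ works because $T\subseteq\{-2,\dots,1\}^2=F(1,T)$.

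Condition~\eqref{enu:TestingDomain_simplified} follows the same way. $F(n,\gamma T)$ is the square $2^n\gamma+\{-2^n,\dots,2^n-1\}^2$, and these squares, with $\gamma$ ranging over $\Z^2$, cover every ball of radius $r$ once $2^{n-1}>r+1$ (again coordinatewise). For $x=e$ the centered square already contains $B(e,r)$, so $\gamma(n,e)=e$ works. Alternatively, we would simply cite \cite[Prop.~3.2]{BaBePoTe24} as the paper does; this is routine.

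It then remains to verify that, for each $x\in F$, the map $a\mapsto S_0(a)(x)$ is a permutation of $\Aa=\{r,y,b,g\}$. We read this off the four blocks position by position:
\begin{itemize}
\item top-left entries $(y,r,b,g)$;
\item top-right entries $(r,b,y,g)$;
\item bottom-left entries $(g,y,b,r)$;
\item bottom-right entries $(r,y,g,b)$.
\end{itemize}
Each list uses every letter exactly once, so each map is bijective. This is the componentwise bijectivity stated in Section~\ref{Subsec:TableTiling}. Lemma~\ref{Lem:SuffCond_T-bijective} then yields $T$-bijectivity.

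The only step needing care is the testing-domain verification, specifically the normalization $\gamma(e)=e$ and $\gamma(n,e)=e$. Both come from the fact that $T$ is symmetric about the centre of the $2\times2$ cell under $D$.
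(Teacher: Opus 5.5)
Your proposal is correct and follows the paper's route. You first show that $T=\{-1,0\}^2$ is a convenient testing domain; the paper simply cites \cite[Prop.~3.2]{BaBePoTe24}, while you also sketch the routine coordinatewise check, where $\gamma(x)=\eta$ for $x=2\eta+f$ in fact works in every case without any shift. You then apply the second part of Lemma~\ref{Lem:SuffCond_T-bijective}, and your position-by-position lists correctly confirm that each map $a\mapsto S_0(a)(x)$, $x\in F$, is a permutation of $\Aa$.
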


\begin{proof}
Straightforward computations using \cite[Prop.~3.2]{BaBePoTe24} imply that $T=\{-1,0\}^2$ is a convenient testing domain. Then the statement follows from Lemma~\ref{Lem:SuffCond_T-bijective} using that the table tiling substitution $S_0:\Aa\to\Aa^F$ with $F=\{0,1\}^2$ is bijective if restricted to any component $x\in F$.
\end{proof}

In \cite{BaBePoTe24} we showed that $\overline{\Orb(S^n(\omega_0))},\, n\in\N,$ converges to $\Omega(S)$ if $W(\omega_0)_T \subseteq W(S)$. For $T$-bijective substitutions, we show next that this condition is also necessary. 
Thus, convergence of periodic approximations to $\Omega(S)$ is highly constrained by this condition.

\begin{proposition}
\label{Prop:Tbijective_convergence}
Let $S$ be a substitution map on $\Aa^\Gamma$ with a convenient testing domain $T$ and let
$\omega_0\in\Aa^\Gamma$. If $S$ is $T$-bijective, then the following
statements are equivalent:
\begin{enumerate}[(i)]
\item $\overline{\Orb(S^n(\omega_0))}$ converges to the subshift $\Omega(S)$.
\item $W(\omega_0)_T \subseteq W(S)$.
\end{enumerate}
\end{proposition}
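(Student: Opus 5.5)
The direction (ii)$\Rightarrow$(i) is the convergence result of \cite[Thm.~2.14]{BaBePoTe24} recalled just before the statement: if $W(\omega_0)_T\subseteq W(S)$, then $\overline{\Orb(S^n(\omega_0))}$ converges to $\Omega(S)$. Nothing new is needed there, and $T$-bijectivity plays no role. The substance is (i)$\Rightarrow$(ii), which we prove by contraposition. Assume there is a patch $P_0\in W(\omega_0)_T\setminus W(S)$. We will produce, for every $n$, an $S$-illegal $T$-patch inside $S^n(\omega_0)$ that lies in a fixed finite set of illegal patches. This prevents convergence to $\Omega(S)$.

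The key step is that $\Psi$ maps illegal patches to illegal patches when $S$ is $T$-bijective. Since $\Aa^T$ is finite and $\Psi$ is a bijection, $\Psi$ is a permutation of $\Aa^T$. Hence $\Psi^{-1}=\Psi^{m-1}$, where $m$ is the order of $\Psi$.

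Next we use that $W(S)$ is closed under $S$ and under taking subpatches. If $R\in\Aa^T\cap W(S)$, then $R\prec S^k(a)$ for some $a$ and $k$. Therefore
\[
\Psi(R)=S(R)|_T\prec S(R)\prec S^{k+1}(a),
\]
using Definition~\ref{Def:SubstitutionMap}(b),(c), so $\Psi(R)\in W(S)$. Thus $\Psi$ maps $\Aa^T\cap W(S)$ into itself. Being injective on a finite set, it maps this set onto itself. By bijectivity, $\Psi$ also permutes the complement $\Aa^T\setminus W(S)$.

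Now the orbit argument. By translation we may assume $\omega_0|_T=P_0$. Lemma~\ref{Lem:Psi_and_S} together with Definition~\ref{Def:SubstitutionMap}(c) gives
\[
S^n(\omega_0)|_T = S^n(P_0)|_T=\Psi^n(P_0)\in\Aa^T\setminus W(S)
\]
for every $n$. Each $\Psi^n(P_0)$ lies in the finite orbit $\{\Psi^k(P_0): 0\le k<m\}$. So for every $n$, the dictionary $W(\Theta_n)_T$ contains some patch from this finite set of $S$-illegal $T$-patches.

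Suppose $\Theta_n\to\Omega(S)$ in $\Jj$. Convergence in $\Jj$ means convergence of the dictionaries in the local pattern topology \cite{BeckusThesis}. In particular, $W(\Theta_n)_T=W(\Omega(S))_T\subseteq W(S)$ for all large $n$. This contradicts the previous paragraph, so (i) fails.

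One could alternatively phrase the conclusion through Theorem~\ref{Thm:SubshiftPartialLimit}: every limit $\Omega_j$ contains $\Psi^k(P_0)$ for some $k$, so $\Omega_j\neq\Omega(S)$.

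The main obstacle is the permutation step. The argument needs bijectivity of $\Psi$ to conclude that illegal patches cannot be mapped to legal ones. Mere invariance of the legal set under $\Psi$ gives only the wrong inclusion, and Example~\ref{Ex:Subst_NoFixedPoints} shows this can fail without bijectivity.

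We must also check that $\Psi(R)\prec S(R)$ uses $T\subseteq F(1,T)$, which is Definition~\ref{Def:TestingDomain}(b) with $\gamma(e)=e$. This inclusion is what is used in Lemma~\ref{Lem:Psi_and_S}.
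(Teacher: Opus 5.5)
Your proof is correct, but for (i)$\Rightarrow$(ii) it takes a somewhat different route from the paper.

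\textbf{The paper's route.} It uses $T$-bijectivity only to get $\Psi^p(Q)=Q$ for the illegal patch $Q=\rho|_T$ (Lemma~\ref{Lem:SuffCond_Q-fixed_point}). It then invokes Proposition~\ref{Prop:SuffCond_Q-fixed_point} to obtain $\rho_Q=\lim_n S^{np}(\rho)$, an $S^p$-fixed point with $\rho_Q|_T=Q\notin W(S)$. Since $S^{np}(\rho)\in\Theta_{np}$, any Hausdorff limit of $(\Theta_n)$ would have to contain $\rho_Q$, so the limit cannot be $\Omega(S)$.

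\textbf{Your route.} You add the observation that $\Psi$ maps $\Aa^T\cap W(S)$ into itself, so a bijective $\Psi$ must permute $\Aa^T\setminus W(S)$. This makes the central patch $S^n(\omega_0)|_T=\Psi^n(P_0)$ illegal for every $n$. You then conclude with the dictionary characterization of convergence in $\Jj$. Your invariance step is sound: $R\prec S^k(a)$ gives $S(R)\prec S^{k+1}(a)$ by Definition~\ref{Def:SubstitutionMap}(b),(c), and $\Psi(R)=S(R)|_T$ relies on $T\subseteq F(1,T)$, as you note.

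\textbf{What each buys.} Your version works purely at the level of $T$-dictionaries, without constructing the limit configuration $\rho_Q$. It also gives a slightly stronger conclusion than the paper states: $W(\Theta_n)_T\not\subseteq W(S)$ for every $n$, so no subsequence of $(\Theta_n)$ converges to $\Omega(S)$. The paper's version needs no lemma about $\Psi$ preserving legality. It also exhibits an explicit $S$-fixed witness $\rho_Q$ in the limit, which is the viewpoint the paper builds on in Proposition~\ref{Prop:T-bijective_FixedPoints}.

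Incidentally, periodicity alone would also close your dictionary argument along the subsequence $(np)_n$, since $\Psi^{np}(P_0)=P_0$. Your permutation step is what upgrades this to every $n$.
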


\begin{proof}
(i)$\Rightarrow$(ii):
Assume that (ii) fails. Then there exists $\rho\in\Orb(\omega_0)$ such that
$Q:=\rho\vert_T\in W(\omega_0)_T\setminus W(S)$.
Since $S$ is $T$-bijective, there exists a $p\in\N$ such that $\Psi^p(Q)=Q$ by Lemma~\ref{Lem:SuffCond_Q-fixed_point}. 
Thus, Proposition~\ref{Prop:SuffCond_Q-fixed_point} yields that $S^{np}(\rho)\to\rho_Q$ with $\rho_Q\vert_T=Q$.
Since $Q$ is not $S$-legal, $\rho_Q\notin\Omega(S)$, and thus
$\overline{\Orb(S^n(\omega_0))}$ cannot converge to $\Omega(S)$.

(ii)$\Rightarrow$(i): This is proved in \cite[Thm.~2.14]{BaBePoTe24}.
\end{proof}

%%%%%%%%%%%%%%%%%%%%%%%%%%%%%%%%%%%
\subsection{Pure T-illegal substitutions}
\label{Subsec:Central_illegal}
%%%%%%%%%%%%%%%%%%%%%%%%%%%%%%%%%%%

\begin{definition}
\label{Def:central illegal}
Let $S$ be a substitution map on $\Aa^\Gamma$ with a convenient testing domain $T$.
Then $S$ is called \emph{purely $T$-illegal} if there exists $m_0\in\N$ such that for all $P\in\Aa^T$ and all $n\ge m_0$,
\[
Q\in\Aa^T : \, Q\prec S^{n}(P) \text{ and } Q\notin W(S)
\quad\Rightarrow\quad
Q = S^{n}(P)\big|_{T}.
\]
\end{definition}

The chair tiling substitution is purely $T$-illegal, see Lemma~\ref{Lem:Chair_CentralIllegal} below.

\begin{proposition}
\label{Prop:Central-illegal_FixedPoints}
Let $S$ be a primitive substitution map on $\Aa^\Gamma$ with convenient testing domain $T$. Let $\omega_0\in\Aa^\Gamma$ with associated patch sets $\{\Qq_j\}_{j=0}^{\ell_0-1}$ and limiting subshifts $\{\Omega_j\}_{j=0}^{\ell_0-1}$ as in Theorem~\ref{Thm:SubshiftPartialLimit}. If $S$ is purely $T$-illegal, then there exists for each $j\in\{0,\ldots,\ell_0-1\}$ and $Q\in\Qq_j\setminus W(S)$, an $S$-fixed point $\rho_Q\in\Omega_j$ such that
\[
\Omega_j
=
\bigcup_{Q\in\Qq_j\setminus W(S)} \overline{\Orb(\rho_Q)}.
\]
\end{proposition}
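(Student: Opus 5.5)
The plan is to reduce to Proposition~\ref{Prop:LimitSubshifts_FiniteUnion}, in its primitive form with $\Qq_j$ replaced by $\Qq_j\setminus W(S)$. This requires, for every $Q\in\Qq_j\setminus W(S)$, an $S$-fixed point $\rho_Q\in\Omega_j$ with $Q\prec\rho_Q$. Following the $T$-bijective case, the fixed points will come from Lemma~\ref{Lem:SuffCond_Q-fixed_point} and Proposition~\ref{Prop:SuffCond_Q-fixed_point}. So the main task is to show that $\Psi$ restricts to a bijection of the finite set $\Qq_j^{\mathrm{ill}}:=\Qq_j\setminus W(S)$ onto $\Qq_{j+1}^{\mathrm{ill}}$, indices taken mod $\ell_0$. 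Composing around the cycle then gives a permutation of $\Qq_j^{\mathrm{ill}}$ under $\Psi^{\ell_0}$.

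First I would show that $\Psi$ maps $\Qq_j^{\mathrm{ill}}$ onto $\Qq_{j+1}^{\mathrm{ill}}$. Replace $N_0$ by a larger value so that $N_0\ell_0\ge m_0$. Every $Q'\in\Qq_{j+1}$ satisfies $Q'\prec S^{m}(\omega_0)$ with $m$ large. By Definition~\ref{Def:TestingDomain}~(a)/(b), $Q'$ lies in $S^{m_0}(P)$ for some $T$-patch $P$ of $S^{m-m_0}(\omega_0)$. If $Q'$ is $S$-illegal, pure $T$-illegality forces $Q'=S^{m_0}(P)|_T=\Psi^{m_0}(P)$ by Lemma~\ref{Lem:Psi_and_S}. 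Running the same argument with $m_0+1$ instead of $m_0$ and using $S^{m_0+1}(P)=S(S^{m_0}(P))$, I get $Q'=\Psi(Q)$ with $Q=\Psi^{m_0}(P)$, where $Q$ lies in the earlier dictionary, i.e.\ in $\Qq_j$. Since $S$-legality is closed under $\Psi$ (because $\Psi(R)\prec S(R)$), $Q$ must also be illegal. Conversely, if $Q\in\Qq_j^{\mathrm{ill}}$, then $\Psi(Q)\in\Qq_{j+1}$ by Lemma~\ref{Lem:Qq-patches-stabilize}~(a).

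The step I expect to be the main obstacle is showing that $\Psi(Q)$ stays illegal, since a priori an illegal patch might map to a legal one. Surjectivity onto $\Qq^{\mathrm{ill}}_{j+1}$ for every $j$ gives $\#\Qq^{\mathrm{ill}}_{j+1}\le\#\Qq^{\mathrm{ill}}_j$. Going around the cycle, all these cardinalities are therefore equal. However, $\Psi(Q)$ for some illegal $Q$ might be legal, in which case surjectivity would need some other illegal preimage. To exclude this I would argue that every $Q\in\Qq_j^{\mathrm{ill}}$ is itself of the form $\Psi^{k}(P)$ for arbitrarily large $k$, by iterating the surjectivity. Then $Q\in\bigcap_k\Psi^k(\Aa^T)$, the eventual image on which $\Psi$ is a permutation. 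Hence $Q$ is $\Psi$-periodic, and in particular $\Psi^{p}(Q)=Q$ for some $p$. This gives the periodicity directly, which is all that Proposition~\ref{Prop:SuffCond_Q-fixed_point} needs.

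With $Q$ periodic, Proposition~\ref{Prop:SuffCond_Q-fixed_point} yields $\rho_Q$ with $\rho_Q|_T=Q$ and $S^p(\rho_Q)=\rho_Q$. It remains to show $\rho_Q\in\Omega_j$. I would choose $p$ to be a multiple of $\ell_0$ and take $\rho$ in the orbit of $S^{N_0\ell_0+j}(\omega_0)$ with $\rho|_T=Q$. Then $S^{np}(\rho)$ lies in $\Theta_{(N_0+n p/\ell_0)\ell_0+j}$, and closedness of limits in $\Jj$ together with Theorem~\ref{Thm:SubshiftPartialLimit} gives $\rho_Q\in\Omega_j$. Proposition~\ref{Prop:LimitSubshifts_FiniteUnion} with primitivity then finishes the proof. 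The case $\Qq_j\setminus W(S)=\emptyset$ needs a separate remark: there $\Omega_j=\Omega(S)$, which is the orbit closure of any legal fixed point.
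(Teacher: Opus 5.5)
Your proposal is correct and is essentially the paper's argument: pure $T$-illegality writes each $Q\in\Qq_j\setminus W(S)$ as a $\Psi$-iterate of an illegal patch from the cyclic $T$-dictionaries, and finiteness then forces $Q$ to be $\Psi$-periodic, after which Propositions~\ref{Prop:SuffCond_Q-fixed_point} and~\ref{Prop:LimitSubshifts_FiniteUnion} conclude (the paper phrases the periodicity step via $\Psi^{n\ell_0}$ permuting $\Qq_j\setminus W(S)$, you via one-step maps and the eventual image of $\Psi$). Your explicit check that $\rho_Q\in\Omega_j$ and your remark on the case $\Qq_j\setminus W(S)=\emptyset$ fill in points the paper leaves implicit; also, your cardinality count already yields $\Psi(\Qq_j\setminus W(S))=\Qq_{j+1}\setminus W(S)$ with $\Psi$ injective there, so the ``obstacle'' you anticipated never arises.
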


\begin{proof}
Let $m_0\in\N$ be the integer of Definition~\ref{Def:central illegal}. 
Fix $j\in\{0,\ldots,\ell_0-1\}$ and choose $n\in\N$ such that $n\ell_0\ge m_0$.
By Lemma~\ref{Lem:Qq-patches-stabilize}, for each
$Q\in\Qq_j\setminus W(S)$ there exists $P_Q\in\Qq_j$ with $Q\prec S^{n\ell_0}(P_Q)$.
Since $Q$ is not $S$-legal, we necessarily have $P_Q\in\Qq_j\setminus W(S)$.
As $S$ is purely $T$-illegal and $n\ell_0\ge m_0$, Lemma~\ref{Lem:Psi_and_S} yields
\[
Q = S^{n\ell_0}(P_Q)\vert_T = \Psi^{n\ell_0}(P_Q).
\]

Since the set $\Qq_j\setminus W(S)$ is finite, the restriction
\[
\Psi^{n\ell_0}:\Qq_j\setminus W(S)\to\Qq_j\setminus W(S)
\]
is bijective. Hence, for every $Q\in\Qq_j\setminus W(S)$ there exists
$p\in\N$, a multiple of $n\ell_0$, such that $\Psi^{p}(Q)=Q$.
Since $S$ is primitive, Proposition~\ref{Prop:SuffCond_Q-fixed_point} and Proposition~\ref{Prop:LimitSubshifts_FiniteUnion} finish the proof.
\end{proof}

A substitution $S$ on $\Z^d$ is called a \emph{block substitution} if its support $F\subseteq \Z^d$ is rectangular, that is,
\[
F=\{0,\ldots,k_1\}\times\cdots\times\{0,\ldots,k_d\}
\]
for some $k_1,\dots,k_d\in\N$. In this case, the group homomorphism $D:\Z^d\to\Z^d$ defined by $D(x):=\bigl((k_j+1)x_j\bigr)_{j=1}^d$
satisfies \eqref{Eq:FundCell_D-F}, and hence there exists a unique substitution map $S$ associated with $S_0$ by Proposition~\ref{Prop:Exist_SubstMap}, see also \cite{Fra05,BaBePoTe24}.

We show that $S$-fixed points of purely $T$-illegal block substitution satisfy the condition in Theorem~\ref{Thm:ChangeLebesgueMeas}.

\begin{lemma}
\label{Lem:Central-illegal_condition}
Let $T=\{-1,0\}^2$ and let $S$ be a purely $T$-illegal block substitution over $\Z^2$. Suppose $\rho\in\Aa^{\Z^2}$ satisfies $S^L(\rho)=\rho$ for some $L\in\N$. Then
\[
x\in\Z^2,\ r\ge 1,\ B(x,r)\cap T=\emptyset
\quad\Longrightarrow\quad
\rho|_{B(x,r)}\in W(S).
\]
\end{lemma}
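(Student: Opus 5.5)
The plan is to use the fixed-point property $S^{nL}(\rho)=\rho$ to pull $\rho|_{B(x,r)}$ back to a patch of the form $S^{m}(P)$ with $P\in\Aa^T$, and then to use pure $T$-illegality to show that the only possibly illegal $T$-subpatch of $S^m(P)$ sits at the center $T$, which $B(x,r)$ avoids.

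Fix $x\in\Z^2$ and $r\ge1$ with $B(x,r)\cap T=\emptyset$, and let $m_0$ be as in Definition~\ref{Def:central illegal}. Since $T$ is a convenient testing domain, Definition~\ref{Def:TestingDomain}~\eqref{enu:TestingDomain_simplified} gives $n$ with $nL\ge m_0$ and $\gamma\in\Z^2$ such that $B(x,r)\subseteq F(nL,\gamma T)$. By Definition~\ref{Def:SubstitutionMap}(c),
\[
\rho|_{B(x,r)} \prec \rho|_{F(nL,\gamma T)} = S^{nL}(\rho)|_{F(nL,\gamma T)} = S^{nL}(\rho|_{\gamma T}).
\]
So $\rho|_{B(x,r)}$ is a subpatch of $S^{nL}(P)$ with $P$ a translate of $\rho|_{\gamma T}\in\Aa^T$.

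Step 1 is to show that every $T$-subpatch of $\rho|_{B(x,r)}$ is $S$-legal. By pure $T$-illegality, any illegal $Q\in\Aa^T$ occurring in $S^{nL}(P)$ equals $S^{nL}(P)|_T$, meaning it sits in the central position $D^{nL}(\gamma)T$ of $F(nL,\gamma T)$. I have to be careful about the translate, since $\gamma$ need not be $e$. For block substitutions, $D^{nL}(\gamma)T$ is the central $2\times2$ block of $F(nL,\gamma T)$, and it coincides with the original $T$ only when $\gamma=e$.

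Step 2 handles the two cases. If $\gamma=e$, then $B(x,r)$ avoids the centre $T$. A $T$-translate inside $B(x,r)$ could equal the central illegal patch as a pattern, but not at that position. Pure $T$-illegality is, however, stated only up to equality of patterns. So I would refine the choice: pick $\gamma$ such that $B(x,r)$ lies in one of the four quadrant sub-blocks of $F(nL,T)$, namely $S^{nL}$ applied to a single letter of $\rho|_T$. For block substitutions with $T=\{-1,0\}^2$ the quadrants of $F(nL,T)$ are exactly $F(nL,\{t\})$, $t\in T$. Any set disjoint from $T$ lies in one quadrant, given a small adjustment because the quadrant boundary lies between $-1$ and $0$. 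This is the step where the block structure is used essentially.

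Once $B(x,r)\subseteq F(nL,\{t\})$, we get $\rho|_{B(x,r)}\prec S^{nL}(\rho(t))$, which is $S$-legal by Definition~\ref{Def:LegalPatches}. The main obstacle is that a ball disjoint from $T$ may still straddle a coordinate axis, for example $x$ far up the $y$-axis with $r$ large. Such a ball is not contained in a quadrant, and yet the claim asserts legality. For this case I would use a half-plane version. Take the first $n'L$ with $B(x,r)\subseteq F(n'L,\gamma T)$ for $\gamma\ne e$, arranged so that the central block $D^{n'L}(\gamma)T$ avoids the ball, which is possible because the ball misses $T$ and is far from it along an axis. Then pure $T$-illegality makes all $T$-subpatches of $\rho|_{B(x,r)}$ legal, and a half-plane patch decomposes into two adjacent letter-supertiles $S^{n'L}(a)$ and $S^{n'L}(b)$. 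Their union is legal if the $2\times1$ domino $ab$ is legal, and I would check that this domino is legal because it appears in the legal $T$-patch $\rho|_{\gamma T}$, itself legal since $\gamma T\ne T$. Verifying this last legality claim is the step I expect to need the most care.
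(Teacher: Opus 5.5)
Your argument is correct in substance, and its decisive step is the same as the paper's. A ball missing $T$ that straddles, say, the $y$-axis above $T$ lies in the iterated support of the domino $M=\{(-1,0),(0,0)\}\subseteq T$, so $\rho|_{B(x,r)}\prec S^{nL}(\rho|_M)$. Moreover $\rho|_M$ is legal because it also sits in the neighbouring translate $e_2+T\neq T$. This is exactly what your insistence on $\gamma\neq e$ achieves, since the lower half of $F(n'L,e_2+T)$ is $F(n'L,M)$.

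The paper does this directly and uniformly. The four dominoes $M_{1,\pm},M_{2,\pm}$ with $M_{j,\pm}\subseteq \pm e_j+T$ have supports $F(n,M_{j,\pm})$ exhausting the four coordinate half-planes, and a ball missing $T$ lies in one of them. This makes several parts of your proposal unnecessary: the quadrant case (which is correct, via single-letter supertiles), your Step~1, and the search for $F(n'L,\gamma T)$ with central block off the ball. Incidentally, ``the first $n'L$'' does not do the job, since its central block can lie inside the ball; you must take $n'$ large.

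The step you flag is closed as in the paper. Pick $k$ with $kL\ge m_0$; then $\pm e_j+T\subseteq F(kL,T)$. So $\rho|_{\pm e_j+T}=S^{kL}(\rho|_T)|_{\pm e_j+T}$ is a $T$-subpatch of $S^{kL}(\rho|_T)$ at a position other than $T$, hence $S$-legal. This uses pure $T$-illegality positionally: all non-central $T$-translates of $S^n(P)$, $n\ge m_0$, carry legal patches. That is how the paper applies it, and it is what the appendix verifies for the chair.

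Under the pattern-equality reading you raised, an illegal patch at $e_2+T$ would merely have to coincide with $\rho|_T$, and ``legal since $\gamma T\neq T$'' would not follow. So you should commit to the positional reading, and then the quadrant detour is superfluous anyway.

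Finally, both your case split and the paper's half-plane claim tacitly use that balls are coordinate boxes (e.g.\ for the max-metric). For the Euclidean or word metric, a large ball can wrap around the corner of $T$, miss $T$, and lie in none of the four half-planes.
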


\begin{proof}
Consider the sets
\begin{align*}
M_{1,+} := \{ (0,0), (0,-1) \},\quad
&M_{1,-} := \{ (-1,0), (-1,-1) \},\\
M_{2,+} := \{ (-1,0); (0,0) \},\quad
&M_{2,-} := \{ (-1,-1), (0,-1) \}
\end{align*}
satisfying $M_{j,\pm}\subseteq \pm e_j + T$, where $e_1,e_2\in\Z^2$ is the standard basis.
By the block structure of $S$, the supports $F(n,M_{1,+})$, $F(n,M_{1,-})$, $F(n,M_{2,+})$ and $F(n,M_{2,-})$ eventually cover the four half-spaces determined by the coordinate axes.

Let $x\in\Z^2$ and $r\ge 1$ with $B(x,r)\cap T=\emptyset$. Then $B(x,r)$ is contained in one of these half-spaces, and hence
\[
\rho|_{B(x,r)}\prec S^{nL}(\rho|_M)
\]
for some $M\in\{M_{1,+},M_{1,-},M_{2,+},M_{2,-}\}$ and suitable $n\in\N$. It therefore suffices to show $\rho|_M\in W(S)$ for all such $M$.

Let $m_0\in\N$ be as in Definition~\ref{Def:central illegal} and choose $n\in\N$ such that $nL\ge m_0$. Then
\[
\rho|_{M_{j,\pm}} \prec \rho|_{\pm e_j+T}=S^{nL}(\rho)|_{\pm e_j+T}, \qquad j\in\{1,2\}.
\]
Since $S$ is purely $T$-illegal and $nL\ge m_0$, every $T$-patch of $S^{nL}(\rho)$  with support different to $T$ is $S$-legal. Thus, $\rho|_{\pm e_j+T}\in W(S)$, and hence $\rho|_{M_{j,\pm}}\in W(S)$.
\end{proof}

%%%%%%%%%%%%%%%%%%%%%%%%%%%%%%%%%%%%%%%%%%%%%%%%%%%%%%%%%%%%%%%%%%%%%%%%%%%%%%%%
\section{$T$-patch graphs}
\label{Sec:T-patch_graphs}
%%%%%%%%%%%%%%%%%%%%%%%%%%%%%%%%%%%%%%%%%%%%%%%%%%%%%%%%%%%%%%%%%%%%%%%%%%%%%%%%

Throughout this section, we study a substitution map $S$ on $\Aa^\Gamma$ with a
convenient testing domain $T$. By introducing suitable graphs, we characterize
$T$-bijective substitutions and estimate the minimal $T$-periods of patches in
$\Aa^T$ in terms of the lengths of cycles in these graphs. This provides a
computationally illustrative way to determine the $S$-fixed points $\rho_Q$.

Let $\{G_k=(\Vv_k,\Ee_k)\}_{k=1}^m$ be directed graphs. Their \emph{graph tensor product}
$\bigotimes_{k=1}^m G_k$ is the directed graph with vertex set $\prod_{k=1}^m \Vv_k$ and
an edge
\[
\bigl((a_1,\ldots,a_m),(b_1,\ldots,b_m)\bigr)\in \Ee_{\otimes_{k=1}^m G_k}
\quad\Longleftrightarrow\quad
(a_k,b_k)\in\Ee_k \text{ for all } k.
\]

For each $t\in T$, define the directed graph $G_t(S)=(\Vv_t,\Ee_t)$ by $\Vv_t:=\Aa$ and
\[
\Ee_t
:=
\bigl\{(a,b)\in \Aa\times\Aa \;:\;
S_0(a)\bigl(D(\eta_t)^{-1}t\bigr)=b
\bigr\},
\]
where $\eta_t\in\Gamma$ is uniquely determined by the condition $t\in D(\eta_t)F$ via \eqref{Eq:FundCell_D-F}.
\begin{definition}
\label{Def:T-patchGraph}
For $T\subseteq \Gamma$ finite, the \emph{$T$-patch graph} is the tensor product
\[
G_T(S):=\bigotimes_{t\in T} G_t(S).
\]
\end{definition}

\begin{figure}[htb]
\centering
\includegraphics[width=0.9\textwidth]{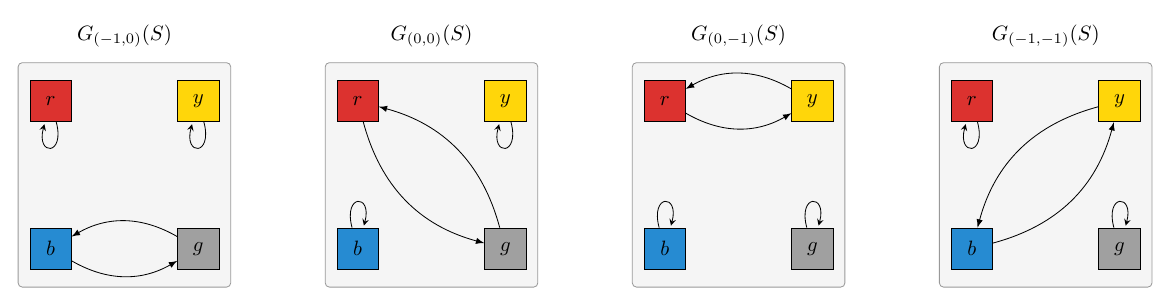}
\caption{The four graphs $G_t(S)$ for $t\in T=\{-1,0\}^2$
associated with the table tiling substitution discussed in Example~\ref{Ex:Table-T-graph}.}
\label{Fig:T-patchGraph_Table}
\end{figure}

Figure~\ref{Fig:T-patchGraph_Table} displays the four graphs corresponding to $t\in T=\{-1,0\}^2$ for the table tiling substitution.

The vertex set of $G_T(S)$ identifies canonically with $\Aa^T$ via $(a_t)_{t\in T}\mapsto P$ with
$P(t)=a_t$.

\begin{lemma}\label{Lem:Tgraph_edges}
Under the identification $\Vv(G_T(S))\cong \Aa^T$, there is a directed edge from $P$ to
$P'$ in $G_T(S)$ if and only if $P'=\Psi(P)$.
\end{lemma}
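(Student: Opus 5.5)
The plan is to unwind both sides of the equivalence into coordinatewise conditions and compare them. By the definition of the graph tensor product, $(P,P')$ is an edge of $G_T(S)$ exactly when $(P(t),P'(t))\in\Ee_t$ for every $t\in T$. By the definition of $G_t(S)$, this means
\[
P'(t)=S_0\bigl(P(t)\bigr)\bigl(D(\eta_t)^{-1}t\bigr)\qquad\text{for all } t\in T.
\]
So it suffices to show that the right-hand side equals $\Psi(P)(t)=S(P)(t)$ for every $t\in T$.

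First, we check that $S(P)(t)$ is defined. Definition~\ref{Def:TestingDomain}~\eqref{enu:NT_simplified}, applied with $x=e$ and $\gamma(e)=e$, gives $T\subseteq F(1,T)$. Hence $S(P)|_T$ makes sense, which was already used in Lemma~\ref{Lem:Psi_and_S}.

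Next, we evaluate $S(P)(t)$ using the explicit formula \eqref{Eq:SubstitutionMap} from Proposition~\ref{Prop:Exist_SubstMap}:
\[
S(P)(t)=S_0\bigl(P(\eta_t)\bigr)\bigl(D(\eta_t)^{-1}t\bigr).
\]
Here $\eta_t$ is the unique element with $t\in D(\eta_t)F$, and since $t\in F(1,T)=\bigsqcup_{\eta\in T}D(\eta)F$, we have $\eta_t\in T$.

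This is the main point requiring care. The formula involves the letter $P(\eta_t)$ at the parent site $\eta_t$, while the edge set $\Ee_t$ is written with the vertex labelled $t$, i.e. with the letter $P(t)$. These agree only when $\eta_t=t$. I would resolve this by reading the identification $\Vv(G_T(S))\cong\Aa^T$ consistently with the definition of $G_t(S)$: the $t$-th factor records the transition from the letter at $\eta_t$ to the letter at $t$, exactly as in the proof of Lemma~\ref{Lem:SuffCond_T-bijective}, where $\Psi(P)(t)$ is shown to depend only on $P(\gamma_t)$. Equivalently, for the testing domains of interest one checks that $t\mapsto\eta_t$ is a bijection of $T$. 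For $T=\{-1,0\}^2$ with $D=2\cdot$, each $t\in T$ lies in $D(t')F$ for a $t'\in T$, so reindexing the tensor factors by $\eta_t$ turns the edge condition into the equation above.

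With the indexing fixed, the coordinatewise edge condition coincides with $P'(t)=S(P)(t)$ for all $t\in T$, that is, $P'=\Psi(P)$. Both directions of the equivalence follow at once, since each step is an equivalence.
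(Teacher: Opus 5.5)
Your coordinatewise unwinding is the paper's proof, and you have found the point that proof skips without comment. Equation \eqref{Eq:SubstitutionMap} gives $\Psi(P)(t)=f_t\bigl(P(\eta_t)\bigr)$ with $f_t(a):=S_0(a)\bigl(D(\eta_t)^{-1}t\bigr)$, but an edge of the tensor product encodes $P'(t)=f_t\bigl(P(t)\bigr)$.

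Neither of your fixes closes this gap. An identification of $\Vv(G_T(S))$ with $\Aa^T$ is one map, used for both endpoints of an edge. In a tensor product, the $t$-th coordinate of the target is tied only to the $t$-th coordinate of the source. So ``reading the $t$-th factor as a transition from the letter at $\eta_t$ to the letter at $t$'' uses different labellings for source and target, and is not an identification of the vertex set. Relabelling coordinates by a permutation $\sigma$ of $T$, applied to both endpoints, only gives conditions $P'(s)=f_{\sigma^{-1}(s)}\bigl(P(s)\bigr)$. It can never produce $P'(t)=f_t\bigl(P(\eta_t)\bigr)$ unless $\eta_t=t$, so ``$t\mapsto\eta_t$ is a bijection'' is not the right property. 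Citing Lemma~\ref{Lem:SuffCond_T-bijective} does not help either, since it only records which letter $\Psi(P)(t)$ depends on.

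What is needed is $\eta_t=t$ for all $t\in T$, i.e.\ $t\in D(t)F$; the paper uses this implicitly. It holds in the cases of interest. For $F=\{0,1\}^2$, $D(x)=2x$ and $T=\{-1,0\}^2$ one has $D(t)^{-1}t=t-2t=-t\in F$, and the same holds for $T=\{-1,0\}^d$ and any block substitution. So the $t'$ in your last step is $t$ itself, and with that one line your computation finishes with no reindexing.

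Without this property the statement is false. Take $\Gamma=\Z$, $\Aa=\{a,b\}$, $F=\{0,1,2\}$, $D(x)=3x$, $S_0(a)=aaa$, $S_0(b)=bbb$. Then $T=\{-1,0,1\}$ is a convenient testing domain with $\eta_1=0$. Every $G_t(S)$ consists of two loops, so $G_T(S)$ has the edge $P\to P$ for every $P$. But $\Psi(P)(1)=P(0)$, so $\Psi(P)\neq P$ whenever $P(0)\neq P(1)$. No identification can repair this, since $\Psi$ is not even injective while $G_T(S)$ is the graph of the identity.
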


\begin{proof}
By definition of graph tensor product, there is an edge from $(a_t)$ to $(b_t)$ if and only
if $(a_t,b_t)\in\Ee_t$ for all $t$, i.e.\
\[
b_t = S_0(a_t)\bigl(D(\gamma_t)^{-1}t\bigr)\quad\text{for all } t\in T.
\]
This is exactly the coordinate form of $P'=\Psi(P)$ using \eqref{Eq:SubstitutionMap}.
\end{proof}

\begin{proposition}[Cycles and periods of $T$-patches]
\label{Prop:Tgraph_cycles_lcm}
Let $S$ be a substitution map on $\Aa^\Gamma$ with a convenient testing domain $T$.
\begin{enumerate}[(a)]
\item A patch $Q\in\Aa^T$ lies on a directed cycle in the $T$-patch graph
$G_T(S)$ if and only if $Q$ is $T$-periodic, that is, if and only if
there exists $p\in\N$ such that $\Psi^p(Q)=Q$.
\item Let $L\in\N$ be the least common multiple (lcm) of the lengths of all directed
cycles in $G_T(S)$. If $Q\in\Aa^T$ is $T$-periodic, then its minimal period divides $L$. In particular, $\Psi^L(Q)=Q$.
\end{enumerate}
\end{proposition}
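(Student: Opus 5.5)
The whole argument rests on Lemma~\ref{Lem:Tgraph_edges}. Under the identification $\Vv(G_T(S))\cong\Aa^T$, the out-edges of a vertex $P$ in $G_T(S)$ are exactly the single edge $P\to\Psi(P)$. So $G_T(S)$ is the functional graph of the self-map $\Psi:\Aa^T\to\Aa^T$: every vertex has out-degree exactly one. Consequently a directed walk of length $n$ starting at $P$ is unique and ends at $\Psi^n(P)$. I will use this as the single translation device and then argue purely combinatorially.

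For (a), suppose first that $Q$ lies on a directed cycle of length $p\ge 1$. Following that cycle from $Q$ is a walk of length $p$ from $Q$ back to $Q$. By uniqueness of walks in a functional graph, this walk must be $Q,\Psi(Q),\dots,\Psi^p(Q)$, hence $\Psi^p(Q)=Q$. Conversely, suppose $\Psi^p(Q)=Q$, and let $p$ be minimal with this property. Then $Q,\Psi(Q),\dots,\Psi^{p-1}(Q)$ are pairwise distinct: if $\Psi^i(Q)=\Psi^j(Q)$ with $0\le i<j<p$, applying $\Psi^{p-j}$ would give $\Psi^{p-(j-i)}(Q)=Q$, contradicting minimality. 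Together with the edge $\Psi^{p-1}(Q)\to Q$, these vertices form a directed cycle through $Q$ of length $p$. The degenerate case $p=1$ is a loop, which I regard as a cycle of length one, consistent with the graph tensor product permitting loops.

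For (b), let $Q$ be $T$-periodic with minimal period $p$. The construction in (a) exhibits a directed cycle of length exactly $p$ in $G_T(S)$. Hence $p$ is among the cycle lengths, and therefore $p\mid L$. Here $L$ is well defined because $\Aa^T$ is finite, so there are only finitely many cycle lengths. Writing $L=kp$ gives $\Psi^L(Q)=(\Psi^p)^k(Q)=Q$.

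The only point needing care is the meaning of \emph{directed cycle}: it must be a closed walk without repeated vertices, with loops allowed. With that convention the minimal period equals the length of the cycle through $Q$. Even if one allowed closed walks with repeated vertices, every closed walk through $Q$ in a functional graph winds around the unique cycle containing $Q$, so all such lengths are multiples of $p$, and $p$ itself is attained. Either way the divisibility in (b) holds, so I expect no genuine obstacle beyond stating this convention precisely.
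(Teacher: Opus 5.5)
Your proposal is correct and follows the same route as the paper: both rest entirely on Lemma~\ref{Lem:Tgraph_edges}, which makes $G_T(S)$ the functional graph of $\Psi$, so that walks in the graph are exactly $\Psi$-orbits. The only difference is that you spell out what the paper dismisses as immediate. Namely, the minimal period $p$ of $Q$ is itself the length of the cycle through $Q$, by distinctness of $Q,\Psi(Q),\dots,\Psi^{p-1}(Q)$, and this is precisely what yields $p\mid L$ in (b).
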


\begin{proof}
\begin{enumerate}[(a)]
\item By Lemma~\ref{Lem:Tgraph_edges}, following edges in $G_T(S)$ is equivalent to
iterating $\Psi$. Hence, a directed cycle based at $Q$ is equivalent to
$\Psi^p(Q)=Q$ for some $p\in\N$.
\item This is an immediate consequence of (a). \hfill\qedhere
\end{enumerate}
\end{proof}

\begin{corollary}\label{Cor:T-bijective+graph}
Let $S$ be a substitution map on $\Aa^\Gamma$ with a convenient testing domain $T$.
The following are equivalent:
\begin{enumerate}[(i)]
\item $S$ is $T$-bijective;
\item the $T$-patch graph $G_T(S)$ is totally cyclic, that is, every vertex lies on a
directed cycle.
\end{enumerate}
\end{corollary}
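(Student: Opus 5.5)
The plan is to translate both conditions into statements about the self-map $\Psi:\Aa^T\to\Aa^T$, using Lemma~\ref{Lem:Tgraph_edges}: the $T$-patch graph $G_T(S)$ is the functional graph of $\Psi$. Every vertex $P$ has exactly one outgoing edge, namely to $\Psi(P)$. By Proposition~\ref{Prop:Tgraph_cycles_lcm}(a), a vertex lies on a directed cycle if and only if it is $T$-periodic. Condition (ii) is therefore equivalent to saying that every $Q\in\Aa^T$ satisfies $\Psi^{p_Q}(Q)=Q$ for some $p_Q\in\N$. It remains to show that this holds if and only if $\Psi$ is bijective, which is condition (i) by Definition~\ref{Def:T-bjective}.

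For (i)$\Rightarrow$(ii), I apply Lemma~\ref{Lem:SuffCond_Q-fixed_point} with $\Qq=\Aa^T$. This set is finite and $\Psi$ maps it bijectively onto itself, so every $Q$ is $\Psi$-periodic. By Proposition~\ref{Prop:Tgraph_cycles_lcm}(a), every $Q$ then lies on a directed cycle.

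For (ii)$\Rightarrow$(i), assume every $Q$ is periodic, say $\Psi^{p_Q}(Q)=Q$ with $p_Q\ge1$.

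\emph{Surjectivity:} $Q=\Psi\bigl(\Psi^{p_Q-1}(Q)\bigr)$, so $Q$ has a preimage (for $p_Q=1$ this is just $Q=\Psi(Q)$).

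\emph{Injectivity:} since $\Aa^T$ is finite, a surjective self-map is injective, so $\Psi$ is bijective. Alternatively, one can argue directly. Suppose $\Psi(P)=\Psi(P')$ and set $L:=\operatorname{lcm}(p_P,p_{P'})$, or use the global $L$ from Proposition~\ref{Prop:Tgraph_cycles_lcm}(b). Then
\[
P=\Psi^{L}(P)=\Psi^{L-1}(\Psi(P))=\Psi^{L-1}(\Psi(P'))=\Psi^L(P')=P'.
\]

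There is no real obstacle here. The only point needing care is that edges in the tensor graph correspond exactly to applications of $\Psi$, so that out-degree is one and cycles are genuine $\Psi$-orbits. This is already supplied by Lemma~\ref{Lem:Tgraph_edges}.
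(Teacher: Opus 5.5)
Your proposal is correct and follows the paper's proof: both use Proposition~\ref{Prop:Tgraph_cycles_lcm}(a) to identify ``lies on a directed cycle'' with $\Psi$-periodicity, and then use the equivalence, on the finite set $\Aa^T$, between bijectivity of $\Psi$ and every point being periodic. The only difference is that you spell out the step ``every patch periodic $\Rightarrow$ $\Psi$ bijective'' (surjectivity via $Q=\Psi(\Psi^{p_Q-1}(Q))$, then finiteness or the $\operatorname{lcm}$ argument), which the paper simply asserts.
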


\begin{proof}
By Proposition~\ref{Prop:Tgraph_cycles_lcm}~(a), a $T$-patch is periodic under
$\Psi$ if and only if it lies on a directed cycle in $G_T(S)$.
If $S$ is $T$-bijective, then $\Psi$ is a bijection on the finite set $\Aa^T$, hence
all $T$-patches are periodic and $G_T(S)$ is totally cyclic.
Conversely, if every vertex of $G_T(S)$ lies on a directed cycle, then every
$T$-patch is periodic under $\Psi$, which implies that $\Psi:\Aa^T\to\Aa^T$ is
bijective, i.e.\ $S$ is $T$-bijective.
\end{proof}

\begin{corollary}
\label{Cor:cyclelengths_general}
Let $S$ be a substitution map on $\Aa^\Gamma$ with a convenient testing domain $T$.
Let $L\in\N$ be the lcm of the lengths of all directed cycles in the associated $T$-patch graph
$G_T(S)$.

If $Q\in\Aa^T$ satisfies $\Psi^{p}(Q) = Q$ for some $p\in\N$, then for every $\rho\in\Aa^\Gamma$ with $\rho|_T=Q$, the limit
\[
\rho_Q:=\lim_{n\to\infty} S^{nL}(\rho)
\]
exists in $\Aa^\Gamma$. Moreover, $\rho_Q|_T=Q$ and $S^{L}(\rho_Q)=\rho_Q$.
\end{corollary}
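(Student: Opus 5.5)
The plan is to reduce Corollary~\ref{Cor:cyclelengths_general} to Proposition~\ref{Prop:SuffCond_Q-fixed_point} with the period $p$ replaced by $L$. The only new point is that $L$ is itself a period of $Q$, i.e.\ $\Psi^L(Q)=Q$. Once this is known, Proposition~\ref{Prop:SuffCond_Q-fixed_point}, applied with $p:=L$, gives directly that $\rho_Q=\lim_{n\to\infty}S^{nL}(\rho)$ exists, is independent of $\rho$ with $\rho|_T=Q$, satisfies $\rho_Q|_T=Q$, and satisfies $S^L(\rho_Q)=\rho_Q$.

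To show $\Psi^L(Q)=Q$, first note that $Q$ is $T$-periodic by hypothesis. By Proposition~\ref{Prop:Tgraph_cycles_lcm}~(a), $Q$ therefore lies on a directed cycle of $G_T(S)$. By Proposition~\ref{Prop:Tgraph_cycles_lcm}~(b), its minimal $T$-period $q$ divides $L$. Write $L=kq$; then $\Psi^L(Q)=(\Psi^q)^k(Q)=Q$.

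For completeness I will spell out why $q$ divides $L$, since part~(b) was stated briefly. By Lemma~\ref{Lem:Tgraph_edges}, every vertex of $G_T(S)$ has exactly one outgoing edge, namely the edge to $\Psi(P)$. Hence the orbit $Q,\Psi(Q),\dots,\Psi^{q-1}(Q)$ consists of $q$ distinct vertices and forms a directed cycle of length exactly $q$. Since $L$ is the lcm of all cycle lengths, $q\mid L$. I also need $L$ to be finite and well defined. This holds because $\Aa^T$ is finite, so there are only finitely many simple cycles, and there is at least one, since $Q$ lies on one. I will read ``cycles'' as simple cycles; nonsimple closed walks have lengths that are sums of simple cycle lengths and are not needed.

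I expect no real obstacle. The one point requiring care is the interpretation of ``directed cycles'' in defining $L$. Because out-degrees are one, every simple cycle is a $\Psi$-orbit, so the cycle through $Q$ has length equal to its minimal period. With that settled, the rest is a direct citation of Proposition~\ref{Prop:SuffCond_Q-fixed_point}.
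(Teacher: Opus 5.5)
Your proposal is correct and follows the paper's proof, which is exactly the combination of Proposition~\ref{Prop:Tgraph_cycles_lcm}~(b) (to get $\Psi^L(Q)=Q$) with Proposition~\ref{Prop:SuffCond_Q-fixed_point} applied with period $L$. Your justification that the minimal period divides $L$ uses the fact that every vertex of $G_T(S)$ has out-degree one. Together with your remark that ``cycles'' must mean simple cycles for $L$ to be finite, this fills in what the paper leaves implicit.
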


\begin{proof}
This is an immediate consequence of Proposition~\ref{Prop:Tgraph_cycles_lcm}~(b) and Proposition~\ref{Prop:SuffCond_Q-fixed_point}.
\end{proof}

\begin{example}
\label{Ex:Table-T-graph}
Let $S$ be the substitution map on $\Aa^{\Z^2}$ associated with the
table tiling substitution, see Section~\ref{Subsec:TableTiling}.
By \cite[Prop.~3.2]{BaBePoTe24}, $T=\{-1,0\}^2$ is a convenient testing domain.
The associated $T$-patch graph of the table tiling substitution is
the tensor product of the four graphs $G_t(S)$ for $t\in T$
shown in Figure~\ref{Fig:T-patchGraph_Table}.
The least common multiple of the lengths of all directed cycles in
$G_T(S)$ equals $L=2$.
Hence, every $T$-patch has $T$-period $2$ by
Corollary~\ref{Cor:cyclelengths_general}.
\end{example}

For purely $T$-illegal substitutions, the least common multiple (lcm) of the cycle lengths in the $T$-patch graph also yields an upper bound on the number of possible limit points in Theorem~\ref{Thm:SubshiftPartialLimit}.

\begin{proposition}
\label{Prop:Bound_LimitPoints_CentralIllegal}
Let $S$ be a purely $T$-illegal primitive substitution with convenient testing domain $T$. Let $\ell\in\N$ be the lcm of the lengths of all directed cycles in the associated $T$-patch graph
$G_T(S)$. Then for every $\omega_0\in\Aa^\Gamma$, the iterative subshifts $\overline{\Orb(S^n(\omega_0))}, \, n\in\N$, have at most $\ell$ limit points.  
\end{proposition}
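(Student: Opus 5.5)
The plan is to show that the limit subshifts $\Omega_j$ from Theorem~\ref{Thm:SubshiftPartialLimit} depend only on the residue of $j$ modulo $\ell$. Concretely, I will show $\Omega_j=\Omega_{j'}$ whenever $j\equiv j'\pmod{\ell}$, for indices in the periodic regime. Then the sequence $(\Theta_n)_n$ has at most $\ell$ limit points. The limit points are exactly $\Omega_0,\ldots,\Omega_{\ell_0-1}$, with each $\Theta_{n\ell_0+j}\to\Omega_j$.

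\textbf{Step 1 (reduce to illegal patches).} I first reduce $\Omega_j$ to the set of $S$-illegal patches in $\Qq_j$. By Proposition~\ref{Prop:Central-illegal_FixedPoints}, we have
\[
\Omega_j=\bigcup_{Q\in\Qq_j\setminus W(S)}\overline{\Orb(\rho_Q)}.
\]
Here $\rho_Q$ is the fixed point from Proposition~\ref{Prop:SuffCond_Q-fixed_point}, which depends only on $Q$. If $\Qq_j\setminus W(S)=\emptyset$, then $\Omega_j=\Omega(S)$: indeed $(\Qq_j,\ell_0)$-legal patches are then $S$-legal, and primitivity gives equality. So $\Omega_j$ is determined by the finite set $\mathcal{I}_j:=\Qq_j\setminus W(S)\subseteq\Aa^T$. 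It therefore suffices to show that $\mathcal{I}_j$, viewed as a function of the iteration index $m=N_0\ell_0+j$, is periodic with period $\ell$.

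\textbf{Step 2 (illegal patches evolve by $\Psi$).} Set $\mathcal{I}(m):=W(S^m(\omega_0))_T\setminus W(S)$. I claim that for $m$ large, $\mathcal{I}(m+1)=\Psi(\mathcal{I}(m))$.

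For the inclusion $\subseteq$, take $Q\in\mathcal{I}(m+1)$. As in the proof of Lemma~\ref{Lem:EventualPeriodicityT}, we get $Q\prec S(P)$ for some $P\in W(S^m(\omega_0))_T$. I want to use purely $T$-illegality, but it applies only to $S^n$ with $n\ge m_0$. So I instead write $Q\prec S^{m_0}(P')$ with $P'\in W(S^{m+1-m_0}(\omega_0))_T$, using convenient testing domain condition~(a) with radius covering $T$. Definition~\ref{Def:central illegal} then gives $Q=S^{m_0}(P')|_T=\Psi^{m_0}(P')$, with $P'$ illegal (otherwise $Q$ would be legal). Hence
\[
Q=\Psi\bigl(\Psi^{m_0-1}(P')\bigr),
\]
and $\Psi^{m_0-1}(P')=S^{m_0-1}(P')|_T$ is a $T$-patch of $S^m(\omega_0)$ by Lemma~\ref{Lem:Psi_and_S}. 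This patch must be illegal, since $S$ maps legal patches to legal patches. So $Q\in\Psi(\mathcal{I}(m))$.

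For the converse inclusion, $\Psi(P)=S(P)|_T\prec S^{m+1}(\omega_0)$. The remaining issue is to check that $\Psi(P)$ stays illegal when $P\in\mathcal{I}(m)$. I expect this to be the main obstacle. The approach I would try first is the following. By Step 1 (the argument in Proposition~\ref{Prop:Central-illegal_FixedPoints}), $\Psi^{n\ell_0}$ permutes $\mathcal{I}_j$. Hence every $P\in\mathcal{I}(m)$ is $\Psi$-periodic. Periodic orbits meeting $\Aa^T\setminus W(S)$ cannot enter $W(S)$: if $\Psi^k(P)\in W(S)$, then applying further $\Psi$ keeps the patch legal, and periodicity would return to $P$, a contradiction. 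This proves $\mathcal{I}(m+1)=\Psi(\mathcal{I}(m))$.

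\textbf{Step 3 (conclude).} Every element of $\mathcal{I}(m)$ is $T$-periodic, so by Proposition~\ref{Prop:Tgraph_cycles_lcm}(b) we have $\Psi^\ell=\mathrm{id}$ on it. Step 2 then gives
\[
\mathcal{I}(m+\ell)=\Psi^\ell(\mathcal{I}(m))=\mathcal{I}(m).
\]
By Step 1, $\Omega_j$ depends only on $j$ modulo $\ell$ (together with the case $\mathcal{I}=\emptyset$, which is also preserved). Therefore $\{\Omega_0,\ldots,\Omega_{\ell_0-1}\}$ has at most $\ell$ elements, which proves the claim.
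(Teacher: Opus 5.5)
Your proof is correct. Its core mechanism is the paper's: pure $T$-illegality forces every eventually occurring illegal $T$-patch to be a $\Psi$-image lying on a cycle of $G_T(S)$, so $\Psi^\ell$ fixes it. The bookkeeping differs at both ends.

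The paper never isolates the one-step recursion $\mathcal{I}(m+1)=\Psi(\mathcal{I}(m))$. Instead it proves directly that the full $T$-dictionaries satisfy $\Pp_n=\Pp_{n+\ell}$ for $n\ge L_0:=\max\{n_0,m_0\}+\sharp\Aa^T$. The legal part is covered by primitivity: $W(S)_T\subseteq\Pp_n$ for $n\ge n_0$. For an illegal $Q\in\Pp_n$, it goes all the way back to write $Q=\Psi^n(P)$ with $P\in W(\omega_0)_T$. Then $Q$ lies on a cycle by pigeonhole ($n>\sharp\Aa^T$), rather than by the permutation argument of Proposition~\ref{Prop:Central-illegal_FixedPoints}. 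It concludes via Remark~\ref{Rem:Period_and_Q_j}: equal $T$-dictionaries give equal $(\Qq_j,\ell_0)$-legality, with no fixed points involved.

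Your Step~1 instead invokes the fixed-point decomposition. That costs you two things: canonicity of $\rho_Q$ (independence of the chosen period, which holds because two periods have a common multiple), and a separate treatment of the empty case. Step~1 could be dropped altogether. Primitivity gives $\Qq_j\cap W(S)=W(S)_T$ for every $j$, so $\mathcal{I}_j=\mathcal{I}_{j'}$ already yields $\Qq_j=\Qq_{j'}$, and hence $\Omega_j=\Omega_{j'}$ by Theorem~\ref{Thm:SubshiftPartialLimit}.

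Each version buys something. Yours exhibits the sharper fact that the illegal part of the dictionary eventually evolves exactly under $\Psi$, which permutes the periodic $T$-patches. The paper's gives an explicit threshold and shows that the dictionaries themselves have eventual period $\ell$.

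One minor point: to write $Q\prec S^{m_0}(P')$ with exponent exactly $m_0$, iterate condition~(b) of Definition~\ref{Def:TestingDomain} rather than citing (a). Condition (a) only applies for exponents $\ge n_r$; alternatively, use any exponent $\ge\max\{m_0,n_r\}$.
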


\begin{proof}
From the proof of Lemma~\ref{Lem:EventualPeriodicityT}, recall that the map 
\[
\Phi:2^{\Aa^T}\longrightarrow 2^{\Aa^T},\qquad
\Phi(\Pp)
:=
\bigl\{
Q\in\Aa^T \;:\; \exists\,P\in\Pp \text{ with } Q\prec S(P)
\bigr\}
\]
satisfies $\Pp_{n+1}=\Phi(\Pp_n)$ for all $n\in\N_0$ with $\Pp_n := W\bigl(S^n(\omega_0)\bigr)_T$.

Since $S$ is primitive, there exists an $n_0$ such that $W(S)_T\subseteq \Pp_n$ for all $n\ge n_0$. Let $m_0$ be as in Definition~\ref{Def:central illegal} and set $L_0:=\max\{n_0,m_0\}+\sharp\Aa^T$. By Remark~\ref{Rem:Period_and_Q_j}, it suffices to show $\Pp_n=\Pp_{n+\ell}$ for all $n\ge L_0$.

Fix $n\ge L_0$. Since $n\ge n_0$, it suffices to prove $\Pp_n\setminus W(S)=\Pp_{n+\ell}\setminus W(S)$.

\begin{itemize}
\item[$\subseteq$:] Let $Q\in\Pp_n\setminus W(S)$. Then $Q\prec S^n(P)$ for some $P\in W(\omega_0)_T$. Since $n\ge m_0$ and $Q\notin W(S)$, we obtain from Lemma~\ref{Lem:Psi_and_S}
		\[
			Q=S^n(P)|_T =\Psi^n(P).
		\] 
		Since $G_T(S)$ has $\sharp \Aa^T$ vertices and $n\geq L_0>\sharp \Aa^T$, Lemma~\ref{Lem:Tgraph_edges} implies that the path generated by iterating $\Psi$ on $P$ reaches a cycle, and hence $Q$ lies on a directed cycle in $G_T(S)$. By the choice of $\ell$, we obtain $\Psi^\ell(Q)=Q$ from Proposition~\ref{Prop:Tgraph_cycles_lcm}~(a). Thus, $Q=S^\ell(Q)|_T$ by Lemma~\ref{Lem:Psi_and_S}. Consequently, $Q\prec S^{n+\ell}(P)$ and hence $Q\in \Pp_{n+\ell}\setminus W(S)$.

\item[$\supseteq$:] Let $Q\in \Pp_{n+\ell}\setminus W(S)$. Then $Q\prec S^{n+\ell}(P)$ for some $P\in W(\omega_0)_T$. Since $n+\ell\ge m_0$ and $Q\notin W(S)$, we obtain from Lemma~\ref{Lem:Psi_and_S}
		\[
			Q=S^{n+\ell}(P)|_T =\Psi^{n+\ell}(P) = \Psi^\ell\bigl( \Psi^n(P) \bigr).
		\] 
		Setting $P':=\Psi^n(P)$, the same argument as above shows that $P'$ lies on a cycle in $G_T(S)$, hence $\Psi^\ell(P')=P'$. Therefore $Q=P'=\Psi^n(P)=S^n(P)|_T$, and thus $Q\in\Pp_n\setminus W(S)$. \hfill\qedhere
\end{itemize}
\end{proof}

\begin{figure}[htb]
\centering
\includegraphics[width=0.9\textwidth]{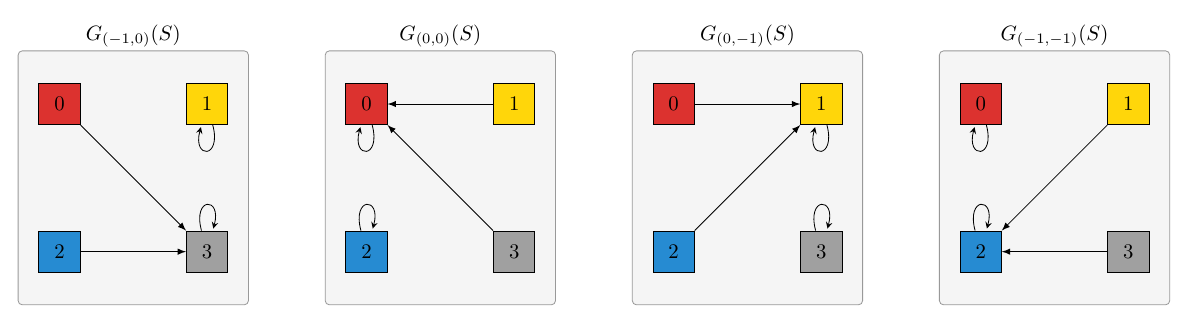}
\caption{The four graphs $G_t(S)$ for $t\in T=\{-1,0\}^2$
associated with the chair tiling substitution are presented.}
\label{Fig:T-patchGraph_Chair}
\end{figure}

\begin{example}
\label{Ex:Chair_NumberLimits}
Let $S$ be the chair tiling substitution defined in Section~\ref{Subsec:ChairTiling}. Then $T=\{-1,0\}^2$ is a convenient testing domain. The associated $T$-patch graph is given in Figure~\ref{Fig:T-patchGraph_Chair}. All directed cycles have length $1$. Thus, Proposition~\ref{Prop:Bound_LimitPoints_CentralIllegal} asserts that for every $\omega_0\in\Aa^{\Z^2}$, the sequence $(\overline{\Orb(S^n(\omega_0))})_{n\in\N}$ admits at most one limit point. In particular, the sequence converges.
\end{example}

Our previous paper \cite{BaBePoTe24} elaborates on applications of graphs for studying the convergence of dynamical systems.
The graphs which are presented here are somewhat different in nature. But the representation of graphs as tensor product may also be used for some of the graphs in \cite{BaBePoTe24}, so that one deals with smaller graphs and improves computation efficiency and analysis.

\printbibliography

\appendix

\section{Chair tiling}
\label{App:ChairTilling}

Recall the chair tiling substitution rule
$S_0:\Aa\to\Aa^F$ on the alphabet
$\Aa:=\{\letterbox{r},\letterbox{y},\letterbox{b},\letterbox{g}\}$ and 
$F=\{0,1\}^2\subseteq\Z^2=\Gamma$ as defined in Section~\ref{Subsec:ChairTiling}.
Since the chair tiling is a block substitution, $T=\{-1,0\}^2$ is a convenient testing domain \cite{BaBePoTe24}, and the $S$-legal $T$-patches are
\begin{equation}
\label{Eq:LegalPatches_Chair}
\begin{array}{cccccccccc}
\block{b}{g}{g}{b} & \block{b}{g}{g}{r} & \block{b}{g}{y}{b} & \block{b}{y}{g}{b} & \block{b}{y}{g}{r} & \block{b}{y}{y}{b} & \block{b}{y}{y}{r} & \block{g}{b}{r}{g} & 
\block{g}{r}{b}{y} & \block{r}{g}{g}{b} \\ 
\block{r}{g}{g}{r} & \block{r}{g}{y}{b} & \block{r}{g}{y}{r} & \block{r}{y}{g}{r} & \block{r}{y}{y}{b} & \block{r}{y}{y}{r} & \block{y}{b}{b}{g} & \block{y}{b}{r}{y} &
\block{y}{r}{r}{g} &  
\end{array}.
\end{equation}
This can be determined by iterating the substitution on a single letter and extracting all $T$-subpatches. Since the substitution is primitive, the collection of $T$-subpatches stabilizes after finitely many steps, and this stabilized set coincides with the set of $S$-legal $T$-patches.

\begin{lemma}
\label{Lem:Chair_CentralIllegal}
Let $T=\{-1,0\}^2$ and let $S$ be the substitution of the chair tiling. Then $S$ is purely $T$-illegal with $m_0=2$. In particular, for every $P\in\Aa^T$, every $n\ge m_0=2$, and every $Q\in\Aa^T$,
\[
Q\prec S^{n}(P),\ Q\notin W(S)
\quad\Longrightarrow\quad
Q = S^{n}(P)\big|_{T}.
\]
\end{lemma}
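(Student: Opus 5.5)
The claim is a finite check, and I would reduce it to the $2\times2$-patches of $S^2(P)$. Fix $P\in\Aa^T$ with $T=\{-1,0\}^2$. The support of $S^n(P)$ is $F(n,T)=\{-2^n,\dots,2^n-1\}^2$. This square is tiled by the $2^n\times 2^n$ blocks $S^n(a)$, $a=P(t)$, placed in the four quadrants. A $T$-translate $xT\subseteq F(n,T)$ lies in one of three positions: entirely inside one quadrant block, straddling exactly one coordinate axis, or equal to $T$ itself, the central window. In the first case the patch is a subpatch of $S^n(a)$ and hence $S$-legal by definition. So only windows crossing an axis, away from the origin, need to be examined.

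For these boundary windows I would use the block structure. Consider a window straddling the vertical axis in the upper half. It lies in the $2\times 2^{n+1}$ column strip around the axis, inside $S^n(P|_{\{(-1,0),(0,0)\}})$. By the recursive structure, this strip is the concatenation of $S$-images of the $2\times 1$ column patches adjacent to the axis in $S^{n-1}(P)$. These columns are themselves boundary $2\times1$-patches of $S^{n-1}(P)$ of the same kind. Hence the set of $1\times2$ (resp. $2\times1$) patches occurring along the positive and negative axes evolves by a finite map, and after $m_0=2$ steps it lies in a set $\mathcal{D}$ of domino patches. The key computational step is to verify, from the rule $S_0$, the following: for every domino $R$ occurring on an axis of $S^{n-1}(P)$ with $n\ge 2$, hence for every $R$ in the image under two substitution steps of all $4^2$ dominoes, each window of $S(R)$ straddling the axis, resp. of $S(R)\cup S(R')$ for consecutive dominoes, lies in the list \eqref{Eq:LegalPatches_Chair}.

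Concretely, the windows on the upper vertical axis have the form $\big(S_0(a)(\cdot,1)\,S_0(b)(\cdot,1)\big)$ stacked over rows of $S_0$-images of vertically adjacent letters. I would tabulate, for each letter pair $(a,b)$, the right column of $S_0(a)$ and the left column of $S_0(b)$. Then I would check that after two iterations only dominoes produced by $S^2$ appear, and that the windows they form are in the legal list. The chair rule makes this manageable. Every letter's image has a fixed corner letter: $S_0(\cdot)$ has top-left $y$ except for $g$, and bottom-right $g$ except for $y$. So the columns along axes quickly become constant legal strings.

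The main obstacle is this bookkeeping of which dominoes can appear along the axes after two steps. I would handle it by computing the $2\times1$-patch version of the map $\Phi$ from the proof of Lemma~\ref{Lem:EventualPeriodicityT}, restricted to the four half-axes. I would show that its image after two steps consists only of $S$-legal dominoes, whose neighbouring windows are legal. What remains is then only the central window, which is $S^n(P)|_T$. Thus any illegal $Q\prec S^n(P)$ must equal $S^n(P)|_T$ for $n\ge2$, and purely $T$-illegal with $m_0=2$ follows from Definition~\ref{Def:central illegal}.
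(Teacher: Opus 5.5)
Your geometric reduction matches the paper's. A $T$-window other than $T$ lies in a coordinate half-plane, so it sits inside $S^n$ of one of the four dominoes of $P$. Windows inside a quadrant are legal, and each axis-straddling window at level $n$ comes either from one axis domino or from two consecutive axis dominoes at level $n-1$.

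\textbf{The gap.} The problem is how you make the check finite for all $n\ge 2$. You evolve only the \emph{set} $\mathcal{D}$ of axis dominoes. But the windows coming from ``$S(R)\cup S(R')$ for consecutive dominoes'' depend on which dominoes are adjacent and in which order, and a domino-level map does not record this. The closure over $\mathcal{D}$ actually fails. Write a horizontal domino straddling the vertical axis as $(a,b)$, with $a$ at $x=-1$.
\begin{itemize}
\item On the upper half-axis, one step (over all $P$) already produces every domino in $\{g,y\}\times\{r,b\}$ in row $0$ and every domino in $\{r,b\}\times\{y,g\}$ in row $1$, so $(r,y),(g,r)\in\mathcal{D}$.
\item Placing $S$ of $(r,y)$ directly below $S$ of $(g,r)$ gives the window $\block{g}{r}{r}{y}$. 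This is exactly the illegal central patch of \eqref{Eq:Chair_CentralPatch}, and that patch is produced precisely by this ordered pair sitting across the origin.
\end{itemize}
So ``whose neighbouring windows are legal'' cannot be read off from the $2\times1$ version of $\Phi$. You would have to iterate on axis windows, i.e.\ ordered pairs of consecutive dominoes, and show that $(r,y)$ directly below $(g,r)$ never occurs inside a half-axis.

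Two smaller slips. First, for $n=2$ the relevant dominoes are those of $S^1(P)$, not the images under two steps. The two sets happen to agree here, but you do not argue it. Second, the axis columns do not become constant: the top axis domino in $\{r,b\}\times\{y,g\}$ is reproduced unchanged at every level, so it depends on $P$.

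\textbf{The missing idea.} The paper avoids tracking across levels by using that legality propagates forward. Iterating property (b) of a convenient testing domain, for a domino $P'$ and $n\ge k\ge 1$, every $T$-window of $S^n(P')$ lies in $S^{n-k}$ of a subpatch of a $T$-window of $S^k(P')$. Since $S$ maps $S$-legal patches to $S$-legal patches, it suffices to verify once, for each of the $32$ dominoes $P'$, that all $T$-subpatches of $S(P')$ or of $S^2(P')$ are legal. That is exactly the finite check in Tables~\ref{Tab:Chair_Iterates_Horizontal}--\ref{Tab:Chair_Iterates_Vertical}.
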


\begin{proof}
Let $P\in\Aa^T$ and $n\ge 2$. Recall that for $M\subseteq\Z^2$, $F(n,M)$ denotes the support of $S^n(P')$ for any patch $P'\in\Aa^M$. Since $F=\{0,1\}^2$ is the support of $S_0(a)$ for any $a\in\Aa$, the sets
\begin{equation}
\label{Eq:HalfspaceSupports}
\begin{aligned}
F(n,\{(-1,0),(0,0)\}),\quad &F(n,\{(-1,-1),(0,-1)\}),\\
F(n,\{(-1,0),(-1,-1)\}),\quad &F(n,\{(0,0),(0,-1)\}) .
\end{aligned}
\end{equation}
eventually cover the upper, lower, left, and right half-plane with respect to the coordinate axes, respectively. Hence, if a $T$-patch $Q\prec S^n(P)$ occurs at some translate $\gamma T\neq T$, then it is contained entirely in one of the four half-spaces determined by the coordinate axes.

It therefore suffices to check that, for every $2\times1$- and $1\times2$-patch $P'$, all $T$-subpatches of $S(P')$ or $S^2(P')$ are $S$-legal. This finite verification is recorded in Tables~\ref{Tab:Chair_Iterates_Horizontal}--\ref{Tab:Chair_Iterates_Vertical}. Consequently, for $n\ge 2$ the only possible $S$-illegal $T$-patch of $S^n(P)$ is the central restriction $S^n(P)|_{T}$, as claimed.
\end{proof}

\begin{table}
\caption{$S$ and $S^2$ applied to all horizontal $2\times1$-patches. If after one iteration all resulting $T$-patches are already $S$-legal, we do not display further iterates.}
\label{Tab:Chair_Iterates_Horizontal}
\centering
\begingroup
\setlength{\tabcolsep}{6pt}%
\renewcommand{\arraystretch}{1.10}%
\noindent\begin{tabular}{@{}p{0.24\linewidth}@{}p{0.24\linewidth}@{}p{0.24\linewidth}@{}p{0.24\linewidth}@{}}
\(
\mathaxisbox{
\begin{tikzpicture}[scale=0.32]
\filldraw[Tiler, line width=0.15pt] (0,-1) rectangle (1,0);
\filldraw[Tiler, line width=0.15pt] (1,-1) rectangle (2,0);
\draw[Frame] (0,-1) rectangle (2,0);
\end{tikzpicture}
}%
\mapsto%
\mathaxisbox{
\begin{tikzpicture}[scale=0.32]
\filldraw[Tiley, line width=0.15pt] (0,-1) rectangle (1,0);
\filldraw[Tiler, line width=0.15pt] (1,-1) rectangle (2,0);
\filldraw[Tiley, line width=0.15pt] (2,-1) rectangle (3,0);
\filldraw[Tiler, line width=0.15pt] (3,-1) rectangle (4,0);
\filldraw[Tiler, line width=0.15pt] (0,-2) rectangle (1,-1);
\filldraw[Tileg, line width=0.15pt] (1,-2) rectangle (2,-1);
\filldraw[Tiler, line width=0.15pt] (2,-2) rectangle (3,-1);
\filldraw[Tileg, line width=0.15pt] (3,-2) rectangle (4,-1);
\draw[Frame] (0,-2) rectangle (4,0);
\end{tikzpicture}
}
\) & \(
\mathaxisbox{
\begin{tikzpicture}[scale=0.32]
\filldraw[Tiler, line width=0.15pt] (0,-1) rectangle (1,0);
\filldraw[Tiley, line width=0.15pt] (1,-1) rectangle (2,0);
\draw[Frame] (0,-1) rectangle (2,0);
\end{tikzpicture}
}%
\mapsto%
\mathaxisbox{
\begin{tikzpicture}[scale=0.32]
\filldraw[Tiley, line width=0.15pt] (0,-1) rectangle (1,0);
\filldraw[Tiler, line width=0.15pt] (1,-1) rectangle (2,0);
\filldraw[Tiley, line width=0.15pt] (2,-1) rectangle (3,0);
\filldraw[Tileb, line width=0.15pt] (3,-1) rectangle (4,0);
\filldraw[Tiler, line width=0.15pt] (0,-2) rectangle (1,-1);
\filldraw[Tileg, line width=0.15pt] (1,-2) rectangle (2,-1);
\filldraw[Tiler, line width=0.15pt] (2,-2) rectangle (3,-1);
\filldraw[Tiley, line width=0.15pt] (3,-2) rectangle (4,-1);
\draw[Frame] (0,-2) rectangle (4,0);
\end{tikzpicture}
}
\) & \(
\mathaxisbox{
\begin{tikzpicture}[scale=0.32]
\filldraw[Tiler, line width=0.15pt] (0,-1) rectangle (1,0);
\filldraw[Tileg, line width=0.15pt] (1,-1) rectangle (2,0);
\draw[Frame] (0,-1) rectangle (2,0);
\end{tikzpicture}
}%
\mapsto%
\mathaxisbox{
\begin{tikzpicture}[scale=0.32]
\filldraw[Tiley, line width=0.15pt] (0,-1) rectangle (1,0);
\filldraw[Tiler, line width=0.15pt] (1,-1) rectangle (2,0);
\filldraw[Tileg, line width=0.15pt] (2,-1) rectangle (3,0);
\filldraw[Tileb, line width=0.15pt] (3,-1) rectangle (4,0);
\filldraw[Tiler, line width=0.15pt] (0,-2) rectangle (1,-1);
\filldraw[Tileg, line width=0.15pt] (1,-2) rectangle (2,-1);
\filldraw[Tiler, line width=0.15pt] (2,-2) rectangle (3,-1);
\filldraw[Tileg, line width=0.15pt] (3,-2) rectangle (4,-1);
\draw[Frame] (0,-2) rectangle (4,0);
\end{tikzpicture}
}
\) &
\(
\mathaxisbox{
\begin{tikzpicture}[scale=0.32]
\filldraw[Tiley, line width=0.15pt] (0,-1) rectangle (1,0);
\filldraw[Tiler, line width=0.15pt] (1,-1) rectangle (2,0);
\draw[Frame] (0,-1) rectangle (2,0);
\end{tikzpicture}
}%
\mapsto%
\mathaxisbox{
\begin{tikzpicture}[scale=0.32]
\filldraw[Tiley, line width=0.15pt] (0,-1) rectangle (1,0);
\filldraw[Tileb, line width=0.15pt] (1,-1) rectangle (2,0);
\filldraw[Tiley, line width=0.15pt] (2,-1) rectangle (3,0);
\filldraw[Tiler, line width=0.15pt] (3,-1) rectangle (4,0);
\filldraw[Tiler, line width=0.15pt] (0,-2) rectangle (1,-1);
\filldraw[Tiley, line width=0.15pt] (1,-2) rectangle (2,-1);
\filldraw[Tiler, line width=0.15pt] (2,-2) rectangle (3,-1);
\filldraw[Tileg, line width=0.15pt] (3,-2) rectangle (4,-1);
\draw[Frame] (0,-2) rectangle (4,0);
\end{tikzpicture}
}
\) \\[6pt] 

\multicolumn{2}{@{}p{0.48\linewidth}@{}}{\(
\mathaxisbox{
\begin{tikzpicture}[scale=0.32]
\filldraw[Tiler, line width=0.15pt] (0,-1) rectangle (1,0);
\filldraw[Tileb, line width=0.15pt] (1,-1) rectangle (2,0);
\draw[Frame] (0,-1) rectangle (2,0);
\end{tikzpicture}
}%
\mapsto%
\mathaxisbox{
\begin{tikzpicture}[scale=0.32]
\filldraw[Tiley, line width=0.15pt] (0,-1) rectangle (1,0);
\filldraw[Tiler, line width=0.15pt] (1,-1) rectangle (2,0);
\filldraw[Tiley, line width=0.15pt] (2,-1) rectangle (3,0);
\filldraw[Tileb, line width=0.15pt] (3,-1) rectangle (4,0);
\filldraw[Tiler, line width=0.15pt] (0,-2) rectangle (1,-1);
\filldraw[Tileg, line width=0.15pt] (1,-2) rectangle (2,-1);
\filldraw[Tileb, line width=0.15pt] (2,-2) rectangle (3,-1);
\filldraw[Tileg, line width=0.15pt] (3,-2) rectangle (4,-1);
\draw[Frame] (0,-2) rectangle (4,0);
\end{tikzpicture}
}%
\mapsto%
\mathaxisbox{
\begin{tikzpicture}[scale=0.32]
\filldraw[Tiley, line width=0.15pt] (0,-1) rectangle (1,0);
\filldraw[Tileb, line width=0.15pt] (1,-1) rectangle (2,0);
\filldraw[Tiley, line width=0.15pt] (2,-1) rectangle (3,0);
\filldraw[Tiler, line width=0.15pt] (3,-1) rectangle (4,0);
\filldraw[Tiley, line width=0.15pt] (4,-1) rectangle (5,0);
\filldraw[Tileb, line width=0.15pt] (5,-1) rectangle (6,0);
\filldraw[Tiley, line width=0.15pt] (6,-1) rectangle (7,0);
\filldraw[Tileb, line width=0.15pt] (7,-1) rectangle (8,0);
\filldraw[Tiler, line width=0.15pt] (0,-2) rectangle (1,-1);
\filldraw[Tiley, line width=0.15pt] (1,-2) rectangle (2,-1);
\filldraw[Tiler, line width=0.15pt] (2,-2) rectangle (3,-1);
\filldraw[Tileg, line width=0.15pt] (3,-2) rectangle (4,-1);
\filldraw[Tiler, line width=0.15pt] (4,-2) rectangle (5,-1);
\filldraw[Tiley, line width=0.15pt] (5,-2) rectangle (6,-1);
\filldraw[Tileb, line width=0.15pt] (6,-2) rectangle (7,-1);
\filldraw[Tileg, line width=0.15pt] (7,-2) rectangle (8,-1);
\filldraw[Tiley, line width=0.15pt] (0,-3) rectangle (1,-2);
\filldraw[Tiler, line width=0.15pt] (1,-3) rectangle (2,-2);
\filldraw[Tileg, line width=0.15pt] (2,-3) rectangle (3,-2);
\filldraw[Tileb, line width=0.15pt] (3,-3) rectangle (4,-2);
\filldraw[Tiley, line width=0.15pt] (4,-3) rectangle (5,-2);
\filldraw[Tileb, line width=0.15pt] (5,-3) rectangle (6,-2);
\filldraw[Tileg, line width=0.15pt] (6,-3) rectangle (7,-2);
\filldraw[Tileb, line width=0.15pt] (7,-3) rectangle (8,-2);
\filldraw[Tiler, line width=0.15pt] (0,-4) rectangle (1,-3);
\filldraw[Tileg, line width=0.15pt] (1,-4) rectangle (2,-3);
\filldraw[Tiler, line width=0.15pt] (2,-4) rectangle (3,-3);
\filldraw[Tileg, line width=0.15pt] (3,-4) rectangle (4,-3);
\filldraw[Tileb, line width=0.15pt] (4,-4) rectangle (5,-3);
\filldraw[Tileg, line width=0.15pt] (5,-4) rectangle (6,-3);
\filldraw[Tiler, line width=0.15pt] (6,-4) rectangle (7,-3);
\filldraw[Tileg, line width=0.15pt] (7,-4) rectangle (8,-3);
\draw[Frame] (0,-4) rectangle (8,0);
\end{tikzpicture}
}
\)} &
\multicolumn{2}{@{}p{0.48\linewidth}@{}}{\(
\mathaxisbox{
\begin{tikzpicture}[scale=0.32]
\filldraw[Tiley, line width=0.15pt] (0,-1) rectangle (1,0);
\filldraw[Tileg, line width=0.15pt] (1,-1) rectangle (2,0);
\draw[Frame] (0,-1) rectangle (2,0);
\end{tikzpicture}
}%
\mapsto%
\mathaxisbox{
\begin{tikzpicture}[scale=0.32]
\filldraw[Tiley, line width=0.15pt] (0,-1) rectangle (1,0);
\filldraw[Tileb, line width=0.15pt] (1,-1) rectangle (2,0);
\filldraw[Tileg, line width=0.15pt] (2,-1) rectangle (3,0);
\filldraw[Tileb, line width=0.15pt] (3,-1) rectangle (4,0);
\filldraw[Tiler, line width=0.15pt] (0,-2) rectangle (1,-1);
\filldraw[Tiley, line width=0.15pt] (1,-2) rectangle (2,-1);
\filldraw[Tiler, line width=0.15pt] (2,-2) rectangle (3,-1);
\filldraw[Tileg, line width=0.15pt] (3,-2) rectangle (4,-1);
\draw[Frame] (0,-2) rectangle (4,0);
\end{tikzpicture}
}%
\mapsto%
\mathaxisbox{
\begin{tikzpicture}[scale=0.32]
\filldraw[Tiley, line width=0.15pt] (0,-1) rectangle (1,0);
\filldraw[Tileb, line width=0.15pt] (1,-1) rectangle (2,0);
\filldraw[Tiley, line width=0.15pt] (2,-1) rectangle (3,0);
\filldraw[Tileb, line width=0.15pt] (3,-1) rectangle (4,0);
\filldraw[Tileg, line width=0.15pt] (4,-1) rectangle (5,0);
\filldraw[Tileb, line width=0.15pt] (5,-1) rectangle (6,0);
\filldraw[Tiley, line width=0.15pt] (6,-1) rectangle (7,0);
\filldraw[Tileb, line width=0.15pt] (7,-1) rectangle (8,0);
\filldraw[Tiler, line width=0.15pt] (0,-2) rectangle (1,-1);
\filldraw[Tiley, line width=0.15pt] (1,-2) rectangle (2,-1);
\filldraw[Tileb, line width=0.15pt] (2,-2) rectangle (3,-1);
\filldraw[Tileg, line width=0.15pt] (3,-2) rectangle (4,-1);
\filldraw[Tiler, line width=0.15pt] (4,-2) rectangle (5,-1);
\filldraw[Tileg, line width=0.15pt] (5,-2) rectangle (6,-1);
\filldraw[Tileb, line width=0.15pt] (6,-2) rectangle (7,-1);
\filldraw[Tileg, line width=0.15pt] (7,-2) rectangle (8,-1);
\filldraw[Tiley, line width=0.15pt] (0,-3) rectangle (1,-2);
\filldraw[Tiler, line width=0.15pt] (1,-3) rectangle (2,-2);
\filldraw[Tiley, line width=0.15pt] (2,-3) rectangle (3,-2);
\filldraw[Tileb, line width=0.15pt] (3,-3) rectangle (4,-2);
\filldraw[Tiley, line width=0.15pt] (4,-3) rectangle (5,-2);
\filldraw[Tiler, line width=0.15pt] (5,-3) rectangle (6,-2);
\filldraw[Tileg, line width=0.15pt] (6,-3) rectangle (7,-2);
\filldraw[Tileb, line width=0.15pt] (7,-3) rectangle (8,-2);
\filldraw[Tiler, line width=0.15pt] (0,-4) rectangle (1,-3);
\filldraw[Tileg, line width=0.15pt] (1,-4) rectangle (2,-3);
\filldraw[Tiler, line width=0.15pt] (2,-4) rectangle (3,-3);
\filldraw[Tiley, line width=0.15pt] (3,-4) rectangle (4,-3);
\filldraw[Tiler, line width=0.15pt] (4,-4) rectangle (5,-3);
\filldraw[Tileg, line width=0.15pt] (5,-4) rectangle (6,-3);
\filldraw[Tiler, line width=0.15pt] (6,-4) rectangle (7,-3);
\filldraw[Tileg, line width=0.15pt] (7,-4) rectangle (8,-3);
\draw[Frame] (0,-4) rectangle (8,0);
\end{tikzpicture}
}
\)}\\[14pt] 
\(
\mathaxisbox{
\begin{tikzpicture}[scale=0.32]
\filldraw[Tiley, line width=0.15pt] (0,-1) rectangle (1,0);
\filldraw[Tiley, line width=0.15pt] (1,-1) rectangle (2,0);
\draw[Frame] (0,-1) rectangle (2,0);
\end{tikzpicture}
}%
\mapsto%
\mathaxisbox{
\begin{tikzpicture}[scale=0.32]
\filldraw[Tiley, line width=0.15pt] (0,-1) rectangle (1,0);
\filldraw[Tileb, line width=0.15pt] (1,-1) rectangle (2,0);
\filldraw[Tiley, line width=0.15pt] (2,-1) rectangle (3,0);
\filldraw[Tileb, line width=0.15pt] (3,-1) rectangle (4,0);
\filldraw[Tiler, line width=0.15pt] (0,-2) rectangle (1,-1);
\filldraw[Tiley, line width=0.15pt] (1,-2) rectangle (2,-1);
\filldraw[Tiler, line width=0.15pt] (2,-2) rectangle (3,-1);
\filldraw[Tiley, line width=0.15pt] (3,-2) rectangle (4,-1);
\draw[Frame] (0,-2) rectangle (4,0);
\end{tikzpicture}
}
\) & 
\(
\mathaxisbox{
\begin{tikzpicture}[scale=0.32]
\filldraw[Tiley, line width=0.15pt] (0,-1) rectangle (1,0);
\filldraw[Tileb, line width=0.15pt] (1,-1) rectangle (2,0);
\draw[Frame] (0,-1) rectangle (2,0);
\end{tikzpicture}
}%
\mapsto%
\mathaxisbox{
\begin{tikzpicture}[scale=0.32]
\filldraw[Tiley, line width=0.15pt] (0,-1) rectangle (1,0);
\filldraw[Tileb, line width=0.15pt] (1,-1) rectangle (2,0);
\filldraw[Tiley, line width=0.15pt] (2,-1) rectangle (3,0);
\filldraw[Tileb, line width=0.15pt] (3,-1) rectangle (4,0);
\filldraw[Tiler, line width=0.15pt] (0,-2) rectangle (1,-1);
\filldraw[Tiley, line width=0.15pt] (1,-2) rectangle (2,-1);
\filldraw[Tileb, line width=0.15pt] (2,-2) rectangle (3,-1);
\filldraw[Tileg, line width=0.15pt] (3,-2) rectangle (4,-1);
\draw[Frame] (0,-2) rectangle (4,0);
\end{tikzpicture}
}
\) &
\(
\mathaxisbox{
\begin{tikzpicture}[scale=0.32]
\filldraw[Tileb, line width=0.15pt] (0,-1) rectangle (1,0);
\filldraw[Tiler, line width=0.15pt] (1,-1) rectangle (2,0);
\draw[Frame] (0,-1) rectangle (2,0);
\end{tikzpicture}
}%
\mapsto%
\mathaxisbox{
\begin{tikzpicture}[scale=0.32]
\filldraw[Tiley, line width=0.15pt] (0,-1) rectangle (1,0);
\filldraw[Tileb, line width=0.15pt] (1,-1) rectangle (2,0);
\filldraw[Tiley, line width=0.15pt] (2,-1) rectangle (3,0);
\filldraw[Tiler, line width=0.15pt] (3,-1) rectangle (4,0);
\filldraw[Tileb, line width=0.15pt] (0,-2) rectangle (1,-1);
\filldraw[Tileg, line width=0.15pt] (1,-2) rectangle (2,-1);
\filldraw[Tiler, line width=0.15pt] (2,-2) rectangle (3,-1);
\filldraw[Tileg, line width=0.15pt] (3,-2) rectangle (4,-1);
\draw[Frame] (0,-2) rectangle (4,0);
\end{tikzpicture}
}
\) & \(
\mathaxisbox{
\begin{tikzpicture}[scale=0.32]
\filldraw[Tileb, line width=0.15pt] (0,-1) rectangle (1,0);
\filldraw[Tiley, line width=0.15pt] (1,-1) rectangle (2,0);
\draw[Frame] (0,-1) rectangle (2,0);
\end{tikzpicture}
}%
\mapsto%
\mathaxisbox{
\begin{tikzpicture}[scale=0.32]
\filldraw[Tiley, line width=0.15pt] (0,-1) rectangle (1,0);
\filldraw[Tileb, line width=0.15pt] (1,-1) rectangle (2,0);
\filldraw[Tiley, line width=0.15pt] (2,-1) rectangle (3,0);
\filldraw[Tileb, line width=0.15pt] (3,-1) rectangle (4,0);
\filldraw[Tileb, line width=0.15pt] (0,-2) rectangle (1,-1);
\filldraw[Tileg, line width=0.15pt] (1,-2) rectangle (2,-1);
\filldraw[Tiler, line width=0.15pt] (2,-2) rectangle (3,-1);
\filldraw[Tiley, line width=0.15pt] (3,-2) rectangle (4,-1);
\draw[Frame] (0,-2) rectangle (4,0);
\end{tikzpicture}
}
\) \\[6pt]
\(
\mathaxisbox{
\begin{tikzpicture}[scale=0.32]
\filldraw[Tileb, line width=0.15pt] (0,-1) rectangle (1,0);
\filldraw[Tileb, line width=0.15pt] (1,-1) rectangle (2,0);
\draw[Frame] (0,-1) rectangle (2,0);
\end{tikzpicture}
}%
\mapsto%
\mathaxisbox{
\begin{tikzpicture}[scale=0.32]
\filldraw[Tiley, line width=0.15pt] (0,-1) rectangle (1,0);
\filldraw[Tileb, line width=0.15pt] (1,-1) rectangle (2,0);
\filldraw[Tiley, line width=0.15pt] (2,-1) rectangle (3,0);
\filldraw[Tileb, line width=0.15pt] (3,-1) rectangle (4,0);
\filldraw[Tileb, line width=0.15pt] (0,-2) rectangle (1,-1);
\filldraw[Tileg, line width=0.15pt] (1,-2) rectangle (2,-1);
\filldraw[Tileb, line width=0.15pt] (2,-2) rectangle (3,-1);
\filldraw[Tileg, line width=0.15pt] (3,-2) rectangle (4,-1);
\draw[Frame] (0,-2) rectangle (4,0);
\end{tikzpicture}
}
\) & \(
\mathaxisbox{
\begin{tikzpicture}[scale=0.32]
\filldraw[Tileb, line width=0.15pt] (0,-1) rectangle (1,0);
\filldraw[Tileg, line width=0.15pt] (1,-1) rectangle (2,0);
\draw[Frame] (0,-1) rectangle (2,0);
\end{tikzpicture}
}%
\mapsto%
\mathaxisbox{
\begin{tikzpicture}[scale=0.32]
\filldraw[Tiley, line width=0.15pt] (0,-1) rectangle (1,0);
\filldraw[Tileb, line width=0.15pt] (1,-1) rectangle (2,0);
\filldraw[Tileg, line width=0.15pt] (2,-1) rectangle (3,0);
\filldraw[Tileb, line width=0.15pt] (3,-1) rectangle (4,0);
\filldraw[Tileb, line width=0.15pt] (0,-2) rectangle (1,-1);
\filldraw[Tileg, line width=0.15pt] (1,-2) rectangle (2,-1);
\filldraw[Tiler, line width=0.15pt] (2,-2) rectangle (3,-1);
\filldraw[Tileg, line width=0.15pt] (3,-2) rectangle (4,-1);
\draw[Frame] (0,-2) rectangle (4,0);
\end{tikzpicture}
}
\) &
\(
\mathaxisbox{
\begin{tikzpicture}[scale=0.32]
\filldraw[Tileg, line width=0.15pt] (0,-1) rectangle (1,0);
\filldraw[Tiler, line width=0.15pt] (1,-1) rectangle (2,0);
\draw[Frame] (0,-1) rectangle (2,0);
\end{tikzpicture}
}%
\mapsto%
\mathaxisbox{
\begin{tikzpicture}[scale=0.32]
\filldraw[Tileg, line width=0.15pt] (0,-1) rectangle (1,0);
\filldraw[Tileb, line width=0.15pt] (1,-1) rectangle (2,0);
\filldraw[Tiley, line width=0.15pt] (2,-1) rectangle (3,0);
\filldraw[Tiler, line width=0.15pt] (3,-1) rectangle (4,0);
\filldraw[Tiler, line width=0.15pt] (0,-2) rectangle (1,-1);
\filldraw[Tileg, line width=0.15pt] (1,-2) rectangle (2,-1);
\filldraw[Tiler, line width=0.15pt] (2,-2) rectangle (3,-1);
\filldraw[Tileg, line width=0.15pt] (3,-2) rectangle (4,-1);
\draw[Frame] (0,-2) rectangle (4,0);
\end{tikzpicture}
}
\) &
\(
\mathaxisbox{
\begin{tikzpicture}[scale=0.32]
\filldraw[Tileg, line width=0.15pt] (0,-1) rectangle (1,0);
\filldraw[Tiley, line width=0.15pt] (1,-1) rectangle (2,0);
\draw[Frame] (0,-1) rectangle (2,0);
\end{tikzpicture}
}%
\mapsto%
\mathaxisbox{
\begin{tikzpicture}[scale=0.32]
\filldraw[Tileg, line width=0.15pt] (0,-1) rectangle (1,0);
\filldraw[Tileb, line width=0.15pt] (1,-1) rectangle (2,0);
\filldraw[Tiley, line width=0.15pt] (2,-1) rectangle (3,0);
\filldraw[Tileb, line width=0.15pt] (3,-1) rectangle (4,0);
\filldraw[Tiler, line width=0.15pt] (0,-2) rectangle (1,-1);
\filldraw[Tileg, line width=0.15pt] (1,-2) rectangle (2,-1);
\filldraw[Tiler, line width=0.15pt] (2,-2) rectangle (3,-1);
\filldraw[Tiley, line width=0.15pt] (3,-2) rectangle (4,-1);
\draw[Frame] (0,-2) rectangle (4,0);
\end{tikzpicture}
}
\) \\[6pt] 
\(
\mathaxisbox{
\begin{tikzpicture}[scale=0.32]
\filldraw[Tileg, line width=0.15pt] (0,-1) rectangle (1,0);
\filldraw[Tileb, line width=0.15pt] (1,-1) rectangle (2,0);
\draw[Frame] (0,-1) rectangle (2,0);
\end{tikzpicture}
}%
\mapsto%
\mathaxisbox{
\begin{tikzpicture}[scale=0.32]
\filldraw[Tileg, line width=0.15pt] (0,-1) rectangle (1,0);
\filldraw[Tileb, line width=0.15pt] (1,-1) rectangle (2,0);
\filldraw[Tiley, line width=0.15pt] (2,-1) rectangle (3,0);
\filldraw[Tileb, line width=0.15pt] (3,-1) rectangle (4,0);
\filldraw[Tiler, line width=0.15pt] (0,-2) rectangle (1,-1);
\filldraw[Tileg, line width=0.15pt] (1,-2) rectangle (2,-1);
\filldraw[Tileb, line width=0.15pt] (2,-2) rectangle (3,-1);
\filldraw[Tileg, line width=0.15pt] (3,-2) rectangle (4,-1);
\draw[Frame] (0,-2) rectangle (4,0);
\end{tikzpicture}
}
\) & \(
\mathaxisbox{
\begin{tikzpicture}[scale=0.32]
\filldraw[Tileg, line width=0.15pt] (0,-1) rectangle (1,0);
\filldraw[Tileg, line width=0.15pt] (1,-1) rectangle (2,0);
\draw[Frame] (0,-1) rectangle (2,0);
\end{tikzpicture}
}%
\mapsto%
\mathaxisbox{
\begin{tikzpicture}[scale=0.32]
\filldraw[Tileg, line width=0.15pt] (0,-1) rectangle (1,0);
\filldraw[Tileb, line width=0.15pt] (1,-1) rectangle (2,0);
\filldraw[Tileg, line width=0.15pt] (2,-1) rectangle (3,0);
\filldraw[Tileb, line width=0.15pt] (3,-1) rectangle (4,0);
\filldraw[Tiler, line width=0.15pt] (0,-2) rectangle (1,-1);
\filldraw[Tileg, line width=0.15pt] (1,-2) rectangle (2,-1);
\filldraw[Tiler, line width=0.15pt] (2,-2) rectangle (3,-1);
\filldraw[Tileg, line width=0.15pt] (3,-2) rectangle (4,-1);
\draw[Frame] (0,-2) rectangle (4,0);
\end{tikzpicture}
}
\) 
\end{tabular}
\par\endgroup
\end{table}

\begin{table}
\caption{$S$ and $S^2$ applied to all vertical $1\times2$-patches. If after one iteration all resulting $T$-patches are already $S$-legal, we do not display further iterates.}
\label{Tab:Chair_Iterates_Vertical}
\centering
\begingroup
\setlength{\tabcolsep}{4pt}%
\renewcommand{\arraystretch}{1.10}%
\noindent\begin{tabular}{@{}p{0.157\linewidth}@{}p{0.157\linewidth}@{}p{0.157\linewidth}@{}p{0.157\linewidth}@{}p{0.157\linewidth}@{}p{0.157\linewidth}@{}}
\(
\mathaxisbox{
\begin{tikzpicture}[scale=0.32]
\filldraw[Tiler, line width=0.15pt] (0,-1) rectangle (1,0);
\filldraw[Tiler, line width=0.15pt] (0,-2) rectangle (1,-1);
\draw[Frame] (0,-2) rectangle (1,0);
\end{tikzpicture}
}%
\mapsto%
\mathaxisbox{
\begin{tikzpicture}[scale=0.32]
\filldraw[Tiley, line width=0.15pt] (0,-1) rectangle (1,0);
\filldraw[Tiler, line width=0.15pt] (1,-1) rectangle (2,0);
\filldraw[Tiler, line width=0.15pt] (0,-2) rectangle (1,-1);
\filldraw[Tileg, line width=0.15pt] (1,-2) rectangle (2,-1);
\filldraw[Tiley, line width=0.15pt] (0,-3) rectangle (1,-2);
\filldraw[Tiler, line width=0.15pt] (1,-3) rectangle (2,-2);
\filldraw[Tiler, line width=0.15pt] (0,-4) rectangle (1,-3);
\filldraw[Tileg, line width=0.15pt] (1,-4) rectangle (2,-3);
\draw[Frame] (0,-4) rectangle (2,0);
\end{tikzpicture}
}
\) &
\(
\mathaxisbox{
\begin{tikzpicture}[scale=0.32]
\filldraw[Tiler, line width=0.15pt] (0,-1) rectangle (1,0);
\filldraw[Tiley, line width=0.15pt] (0,-2) rectangle (1,-1);
\draw[Frame] (0,-2) rectangle (1,0);
\end{tikzpicture}
}%
\mapsto%
\mathaxisbox{
\begin{tikzpicture}[scale=0.32]
\filldraw[Tiley, line width=0.15pt] (0,-1) rectangle (1,0);
\filldraw[Tiler, line width=0.15pt] (1,-1) rectangle (2,0);
\filldraw[Tiler, line width=0.15pt] (0,-2) rectangle (1,-1);
\filldraw[Tileg, line width=0.15pt] (1,-2) rectangle (2,-1);
\filldraw[Tiley, line width=0.15pt] (0,-3) rectangle (1,-2);
\filldraw[Tileb, line width=0.15pt] (1,-3) rectangle (2,-2);
\filldraw[Tiler, line width=0.15pt] (0,-4) rectangle (1,-3);
\filldraw[Tiley, line width=0.15pt] (1,-4) rectangle (2,-3);
\draw[Frame] (0,-4) rectangle (2,0);
\end{tikzpicture}
}
\) &
\(
\mathaxisbox{
\begin{tikzpicture}[scale=0.32]
\filldraw[Tiler, line width=0.15pt] (0,-1) rectangle (1,0);
\filldraw[Tileb, line width=0.15pt] (0,-2) rectangle (1,-1);
\draw[Frame] (0,-2) rectangle (1,0);
\end{tikzpicture}
}%
\mapsto%
\mathaxisbox{
\begin{tikzpicture}[scale=0.32]
\filldraw[Tiley, line width=0.15pt] (0,-1) rectangle (1,0);
\filldraw[Tiler, line width=0.15pt] (1,-1) rectangle (2,0);
\filldraw[Tiler, line width=0.15pt] (0,-2) rectangle (1,-1);
\filldraw[Tileg, line width=0.15pt] (1,-2) rectangle (2,-1);
\filldraw[Tiley, line width=0.15pt] (0,-3) rectangle (1,-2);
\filldraw[Tileb, line width=0.15pt] (1,-3) rectangle (2,-2);
\filldraw[Tileb, line width=0.15pt] (0,-4) rectangle (1,-3);
\filldraw[Tileg, line width=0.15pt] (1,-4) rectangle (2,-3);
\draw[Frame] (0,-4) rectangle (2,0);
\end{tikzpicture}
}
\) &
\(
\mathaxisbox{
\begin{tikzpicture}[scale=0.32]
\filldraw[Tiler, line width=0.15pt] (0,-1) rectangle (1,0);
\filldraw[Tileg, line width=0.15pt] (0,-2) rectangle (1,-1);
\draw[Frame] (0,-2) rectangle (1,0);
\end{tikzpicture}
}%
\mapsto%
\mathaxisbox{
\begin{tikzpicture}[scale=0.32]
\filldraw[Tiley, line width=0.15pt] (0,-1) rectangle (1,0);
\filldraw[Tiler, line width=0.15pt] (1,-1) rectangle (2,0);
\filldraw[Tiler, line width=0.15pt] (0,-2) rectangle (1,-1);
\filldraw[Tileg, line width=0.15pt] (1,-2) rectangle (2,-1);
\filldraw[Tileg, line width=0.15pt] (0,-3) rectangle (1,-2);
\filldraw[Tileb, line width=0.15pt] (1,-3) rectangle (2,-2);
\filldraw[Tiler, line width=0.15pt] (0,-4) rectangle (1,-3);
\filldraw[Tileg, line width=0.15pt] (1,-4) rectangle (2,-3);
\draw[Frame] (0,-4) rectangle (2,0);
\end{tikzpicture}
}
\) &
\(
\mathaxisbox{
\begin{tikzpicture}[scale=0.32]
\filldraw[Tileg, line width=0.15pt] (0,-1) rectangle (1,0);
\filldraw[Tileb, line width=0.15pt] (0,-2) rectangle (1,-1);
\draw[Frame] (0,-2) rectangle (1,0);
\end{tikzpicture}
}%
\mapsto%
\mathaxisbox{
\begin{tikzpicture}[scale=0.32]
\filldraw[Tileg, line width=0.15pt] (0,-1) rectangle (1,0);
\filldraw[Tileb, line width=0.15pt] (1,-1) rectangle (2,0);
\filldraw[Tiler, line width=0.15pt] (0,-2) rectangle (1,-1);
\filldraw[Tileg, line width=0.15pt] (1,-2) rectangle (2,-1);
\filldraw[Tiley, line width=0.15pt] (0,-3) rectangle (1,-2);
\filldraw[Tileb, line width=0.15pt] (1,-3) rectangle (2,-2);
\filldraw[Tileb, line width=0.15pt] (0,-4) rectangle (1,-3);
\filldraw[Tileg, line width=0.15pt] (1,-4) rectangle (2,-3);
\draw[Frame] (0,-4) rectangle (2,0);
\end{tikzpicture}
}
\) &
\(
\mathaxisbox{
\begin{tikzpicture}[scale=0.32]
\filldraw[Tileg, line width=0.15pt] (0,-1) rectangle (1,0);
\filldraw[Tiler, line width=0.15pt] (0,-2) rectangle (1,-1);
\draw[Frame] (0,-2) rectangle (1,0);
\end{tikzpicture}
}%
\mapsto%
\mathaxisbox{
\begin{tikzpicture}[scale=0.32]
\filldraw[Tileg, line width=0.15pt] (0,-1) rectangle (1,0);
\filldraw[Tileb, line width=0.15pt] (1,-1) rectangle (2,0);
\filldraw[Tiler, line width=0.15pt] (0,-2) rectangle (1,-1);
\filldraw[Tileg, line width=0.15pt] (1,-2) rectangle (2,-1);
\filldraw[Tiley, line width=0.15pt] (0,-3) rectangle (1,-2);
\filldraw[Tiler, line width=0.15pt] (1,-3) rectangle (2,-2);
\filldraw[Tiler, line width=0.15pt] (0,-4) rectangle (1,-3);
\filldraw[Tileg, line width=0.15pt] (1,-4) rectangle (2,-3);
\draw[Frame] (0,-4) rectangle (2,0);
\end{tikzpicture}
}
\)\\[15pt]

\(
\mathaxisbox{
\begin{tikzpicture}[scale=0.32]
\filldraw[Tiley, line width=0.15pt] (0,-1) rectangle (1,0);
\filldraw[Tiler, line width=0.15pt] (0,-2) rectangle (1,-1);
\draw[Frame] (0,-2) rectangle (1,0);
\end{tikzpicture}
}%
\mapsto%
\mathaxisbox{
\begin{tikzpicture}[scale=0.32]
\filldraw[Tiley, line width=0.15pt] (0,-1) rectangle (1,0);
\filldraw[Tileb, line width=0.15pt] (1,-1) rectangle (2,0);
\filldraw[Tiler, line width=0.15pt] (0,-2) rectangle (1,-1);
\filldraw[Tiley, line width=0.15pt] (1,-2) rectangle (2,-1);
\filldraw[Tiley, line width=0.15pt] (0,-3) rectangle (1,-2);
\filldraw[Tiler, line width=0.15pt] (1,-3) rectangle (2,-2);
\filldraw[Tiler, line width=0.15pt] (0,-4) rectangle (1,-3);
\filldraw[Tileg, line width=0.15pt] (1,-4) rectangle (2,-3);
\draw[Frame] (0,-4) rectangle (2,0);
\end{tikzpicture}
}
\) &
\(
\mathaxisbox{
\begin{tikzpicture}[scale=0.32]
\filldraw[Tiley, line width=0.15pt] (0,-1) rectangle (1,0);
\filldraw[Tiley, line width=0.15pt] (0,-2) rectangle (1,-1);
\draw[Frame] (0,-2) rectangle (1,0);
\end{tikzpicture}
}%
\mapsto%
\mathaxisbox{
\begin{tikzpicture}[scale=0.32]
\filldraw[Tiley, line width=0.15pt] (0,-1) rectangle (1,0);
\filldraw[Tileb, line width=0.15pt] (1,-1) rectangle (2,0);
\filldraw[Tiler, line width=0.15pt] (0,-2) rectangle (1,-1);
\filldraw[Tiley, line width=0.15pt] (1,-2) rectangle (2,-1);
\filldraw[Tiley, line width=0.15pt] (0,-3) rectangle (1,-2);
\filldraw[Tileb, line width=0.15pt] (1,-3) rectangle (2,-2);
\filldraw[Tiler, line width=0.15pt] (0,-4) rectangle (1,-3);
\filldraw[Tiley, line width=0.15pt] (1,-4) rectangle (2,-3);
\draw[Frame] (0,-4) rectangle (2,0);
\end{tikzpicture}
}
\) &
\(
\mathaxisbox{
\begin{tikzpicture}[scale=0.32]
\filldraw[Tiley, line width=0.15pt] (0,-1) rectangle (1,0);
\filldraw[Tileb, line width=0.15pt] (0,-2) rectangle (1,-1);
\draw[Frame] (0,-2) rectangle (1,0);
\end{tikzpicture}
}%
\mapsto%
\mathaxisbox{
\begin{tikzpicture}[scale=0.32]
\filldraw[Tiley, line width=0.15pt] (0,-1) rectangle (1,0);
\filldraw[Tileb, line width=0.15pt] (1,-1) rectangle (2,0);
\filldraw[Tiler, line width=0.15pt] (0,-2) rectangle (1,-1);
\filldraw[Tiley, line width=0.15pt] (1,-2) rectangle (2,-1);
\filldraw[Tiley, line width=0.15pt] (0,-3) rectangle (1,-2);
\filldraw[Tileb, line width=0.15pt] (1,-3) rectangle (2,-2);
\filldraw[Tileb, line width=0.15pt] (0,-4) rectangle (1,-3);
\filldraw[Tileg, line width=0.15pt] (1,-4) rectangle (2,-3);
\draw[Frame] (0,-4) rectangle (2,0);
\end{tikzpicture}
}
\) &
\(
\mathaxisbox{
\begin{tikzpicture}[scale=0.32]
\filldraw[Tileb, line width=0.15pt] (0,-1) rectangle (1,0);
\filldraw[Tiley, line width=0.15pt] (0,-2) rectangle (1,-1);
\draw[Frame] (0,-2) rectangle (1,0);
\end{tikzpicture}
}%
\mapsto%
\mathaxisbox{
\begin{tikzpicture}[scale=0.32]
\filldraw[Tiley, line width=0.15pt] (0,-1) rectangle (1,0);
\filldraw[Tileb, line width=0.15pt] (1,-1) rectangle (2,0);
\filldraw[Tileb, line width=0.15pt] (0,-2) rectangle (1,-1);
\filldraw[Tileg, line width=0.15pt] (1,-2) rectangle (2,-1);
\filldraw[Tiley, line width=0.15pt] (0,-3) rectangle (1,-2);
\filldraw[Tileb, line width=0.15pt] (1,-3) rectangle (2,-2);
\filldraw[Tiler, line width=0.15pt] (0,-4) rectangle (1,-3);
\filldraw[Tiley, line width=0.15pt] (1,-4) rectangle (2,-3);
\draw[Frame] (0,-4) rectangle (2,0);
\end{tikzpicture}
}
\) &
\(
\mathaxisbox{
\begin{tikzpicture}[scale=0.32]
\filldraw[Tileb, line width=0.15pt] (0,-1) rectangle (1,0);
\filldraw[Tileb, line width=0.15pt] (0,-2) rectangle (1,-1);
\draw[Frame] (0,-2) rectangle (1,0);
\end{tikzpicture}
}%
\mapsto%
\mathaxisbox{
\begin{tikzpicture}[scale=0.32]
\filldraw[Tiley, line width=0.15pt] (0,-1) rectangle (1,0);
\filldraw[Tileb, line width=0.15pt] (1,-1) rectangle (2,0);
\filldraw[Tileb, line width=0.15pt] (0,-2) rectangle (1,-1);
\filldraw[Tileg, line width=0.15pt] (1,-2) rectangle (2,-1);
\filldraw[Tiley, line width=0.15pt] (0,-3) rectangle (1,-2);
\filldraw[Tileb, line width=0.15pt] (1,-3) rectangle (2,-2);
\filldraw[Tileb, line width=0.15pt] (0,-4) rectangle (1,-3);
\filldraw[Tileg, line width=0.15pt] (1,-4) rectangle (2,-3);
\draw[Frame] (0,-4) rectangle (2,0);
\end{tikzpicture}
}
\) &
\(
\mathaxisbox{
\begin{tikzpicture}[scale=0.32]
\filldraw[Tileb, line width=0.15pt] (0,-1) rectangle (1,0);
\filldraw[Tileg, line width=0.15pt] (0,-2) rectangle (1,-1);
\draw[Frame] (0,-2) rectangle (1,0);
\end{tikzpicture}
}%
\mapsto%
\mathaxisbox{
\begin{tikzpicture}[scale=0.32]
\filldraw[Tiley, line width=0.15pt] (0,-1) rectangle (1,0);
\filldraw[Tileb, line width=0.15pt] (1,-1) rectangle (2,0);
\filldraw[Tileb, line width=0.15pt] (0,-2) rectangle (1,-1);
\filldraw[Tileg, line width=0.15pt] (1,-2) rectangle (2,-1);
\filldraw[Tileg, line width=0.15pt] (0,-3) rectangle (1,-2);
\filldraw[Tileb, line width=0.15pt] (1,-3) rectangle (2,-2);
\filldraw[Tiler, line width=0.15pt] (0,-4) rectangle (1,-3);
\filldraw[Tileg, line width=0.15pt] (1,-4) rectangle (2,-3);
\draw[Frame] (0,-4) rectangle (2,0);
\end{tikzpicture}
}
\)\\[15pt]

\multicolumn{4}{@{}p{0.57\linewidth}@{}}{\(
\mathaxisbox{
\begin{tikzpicture}[scale=0.32]
\filldraw[Tiley, line width=0.15pt] (0,-1) rectangle (1,0);
\filldraw[Tileg, line width=0.15pt] (0,-2) rectangle (1,-1);
\draw[Frame] (0,-2) rectangle (1,0);
\end{tikzpicture}
}%
\mapsto%
\mathaxisbox{
\begin{tikzpicture}[scale=0.32]
\filldraw[Tiley, line width=0.15pt] (0,-1) rectangle (1,0);
\filldraw[Tileb, line width=0.15pt] (1,-1) rectangle (2,0);
\filldraw[Tiler, line width=0.15pt] (0,-2) rectangle (1,-1);
\filldraw[Tiley, line width=0.15pt] (1,-2) rectangle (2,-1);
\filldraw[Tileg, line width=0.15pt] (0,-3) rectangle (1,-2);
\filldraw[Tileb, line width=0.15pt] (1,-3) rectangle (2,-2);
\filldraw[Tiler, line width=0.15pt] (0,-4) rectangle (1,-3);
\filldraw[Tileg, line width=0.15pt] (1,-4) rectangle (2,-3);
\draw[Frame] (0,-4) rectangle (2,0);
\end{tikzpicture}
}%
\mapsto%
\mathaxisbox{
\begin{tikzpicture}[scale=0.32]
\filldraw[Tiley, line width=0.15pt] (0,-1) rectangle (1,0);
\filldraw[Tileb, line width=0.15pt] (1,-1) rectangle (2,0);
\filldraw[Tiley, line width=0.15pt] (2,-1) rectangle (3,0);
\filldraw[Tileb, line width=0.15pt] (3,-1) rectangle (4,0);
\filldraw[Tiler, line width=0.15pt] (0,-2) rectangle (1,-1);
\filldraw[Tiley, line width=0.15pt] (1,-2) rectangle (2,-1);
\filldraw[Tileb, line width=0.15pt] (2,-2) rectangle (3,-1);
\filldraw[Tileg, line width=0.15pt] (3,-2) rectangle (4,-1);
\filldraw[Tiley, line width=0.15pt] (0,-3) rectangle (1,-2);
\filldraw[Tiler, line width=0.15pt] (1,-3) rectangle (2,-2);
\filldraw[Tiley, line width=0.15pt] (2,-3) rectangle (3,-2);
\filldraw[Tileb, line width=0.15pt] (3,-3) rectangle (4,-2);
\filldraw[Tiler, line width=0.15pt] (0,-4) rectangle (1,-3);
\filldraw[Tileg, line width=0.15pt] (1,-4) rectangle (2,-3);
\filldraw[Tiler, line width=0.15pt] (2,-4) rectangle (3,-3);
\filldraw[Tiley, line width=0.15pt] (3,-4) rectangle (4,-3);
\filldraw[Tileg, line width=0.15pt] (0,-5) rectangle (1,-4);
\filldraw[Tileb, line width=0.15pt] (1,-5) rectangle (2,-4);
\filldraw[Tiley, line width=0.15pt] (2,-5) rectangle (3,-4);
\filldraw[Tileb, line width=0.15pt] (3,-5) rectangle (4,-4);
\filldraw[Tiler, line width=0.15pt] (0,-6) rectangle (1,-5);
\filldraw[Tileg, line width=0.15pt] (1,-6) rectangle (2,-5);
\filldraw[Tileb, line width=0.15pt] (2,-6) rectangle (3,-5);
\filldraw[Tileg, line width=0.15pt] (3,-6) rectangle (4,-5);
\filldraw[Tiley, line width=0.15pt] (0,-7) rectangle (1,-6);
\filldraw[Tiler, line width=0.15pt] (1,-7) rectangle (2,-6);
\filldraw[Tileg, line width=0.15pt] (2,-7) rectangle (3,-6);
\filldraw[Tileb, line width=0.15pt] (3,-7) rectangle (4,-6);
\filldraw[Tiler, line width=0.15pt] (0,-8) rectangle (1,-7);
\filldraw[Tileg, line width=0.15pt] (1,-8) rectangle (2,-7);
\filldraw[Tiler, line width=0.15pt] (2,-8) rectangle (3,-7);
\filldraw[Tileg, line width=0.15pt] (3,-8) rectangle (4,-7);
\draw[Frame] (0,-8) rectangle (4,0);
\end{tikzpicture}
}
\)
\quad
\(
\mathaxisbox{
\begin{tikzpicture}[scale=0.32]
\filldraw[Tileb, line width=0.15pt] (0,-1) rectangle (1,0);
\filldraw[Tiler, line width=0.15pt] (0,-2) rectangle (1,-1);
\draw[Frame] (0,-2) rectangle (1,0);
\end{tikzpicture}
}%
\mapsto%
\mathaxisbox{
\begin{tikzpicture}[scale=0.32]
\filldraw[Tiley, line width=0.15pt] (0,-1) rectangle (1,0);
\filldraw[Tileb, line width=0.15pt] (1,-1) rectangle (2,0);
\filldraw[Tileb, line width=0.15pt] (0,-2) rectangle (1,-1);
\filldraw[Tileg, line width=0.15pt] (1,-2) rectangle (2,-1);
\filldraw[Tiley, line width=0.15pt] (0,-3) rectangle (1,-2);
\filldraw[Tiler, line width=0.15pt] (1,-3) rectangle (2,-2);
\filldraw[Tiler, line width=0.15pt] (0,-4) rectangle (1,-3);
\filldraw[Tileg, line width=0.15pt] (1,-4) rectangle (2,-3);
\draw[Frame] (0,-4) rectangle (2,0);
\end{tikzpicture}
}%
\mapsto%
\mathaxisbox{
\begin{tikzpicture}[scale=0.32]
\filldraw[Tiley, line width=0.15pt] (0,-1) rectangle (1,0);
\filldraw[Tileb, line width=0.15pt] (1,-1) rectangle (2,0);
\filldraw[Tiley, line width=0.15pt] (2,-1) rectangle (3,0);
\filldraw[Tileb, line width=0.15pt] (3,-1) rectangle (4,0);
\filldraw[Tiler, line width=0.15pt] (0,-2) rectangle (1,-1);
\filldraw[Tiley, line width=0.15pt] (1,-2) rectangle (2,-1);
\filldraw[Tileb, line width=0.15pt] (2,-2) rectangle (3,-1);
\filldraw[Tileg, line width=0.15pt] (3,-2) rectangle (4,-1);
\filldraw[Tiley, line width=0.15pt] (0,-3) rectangle (1,-2);
\filldraw[Tileb, line width=0.15pt] (1,-3) rectangle (2,-2);
\filldraw[Tileg, line width=0.15pt] (2,-3) rectangle (3,-2);
\filldraw[Tileb, line width=0.15pt] (3,-3) rectangle (4,-2);
\filldraw[Tileb, line width=0.15pt] (0,-4) rectangle (1,-3);
\filldraw[Tileg, line width=0.15pt] (1,-4) rectangle (2,-3);
\filldraw[Tiler, line width=0.15pt] (2,-4) rectangle (3,-3);
\filldraw[Tileg, line width=0.15pt] (3,-4) rectangle (4,-3);
\filldraw[Tiley, line width=0.15pt] (0,-5) rectangle (1,-4);
\filldraw[Tileb, line width=0.15pt] (1,-5) rectangle (2,-4);
\filldraw[Tiley, line width=0.15pt] (2,-5) rectangle (3,-4);
\filldraw[Tiler, line width=0.15pt] (3,-5) rectangle (4,-4);
\filldraw[Tiler, line width=0.15pt] (0,-6) rectangle (1,-5);
\filldraw[Tiley, line width=0.15pt] (1,-6) rectangle (2,-5);
\filldraw[Tiler, line width=0.15pt] (2,-6) rectangle (3,-5);
\filldraw[Tileg, line width=0.15pt] (3,-6) rectangle (4,-5);
\filldraw[Tiley, line width=0.15pt] (0,-7) rectangle (1,-6);
\filldraw[Tiler, line width=0.15pt] (1,-7) rectangle (2,-6);
\filldraw[Tileg, line width=0.15pt] (2,-7) rectangle (3,-6);
\filldraw[Tileb, line width=0.15pt] (3,-7) rectangle (4,-6);
\filldraw[Tiler, line width=0.15pt] (0,-8) rectangle (1,-7);
\filldraw[Tileg, line width=0.15pt] (1,-8) rectangle (2,-7);
\filldraw[Tiler, line width=0.15pt] (2,-8) rectangle (3,-7);
\filldraw[Tileg, line width=0.15pt] (3,-8) rectangle (4,-7);
\draw[Frame] (0,-8) rectangle (4,0);
\end{tikzpicture}
}
\)} &

\(
\mathaxisbox{
\begin{tikzpicture}[scale=0.32]
\filldraw[Tileg, line width=0.15pt] (0,-1) rectangle (1,0);
\filldraw[Tileg, line width=0.15pt] (0,-2) rectangle (1,-1);
\draw[Frame] (0,-2) rectangle (1,0);
\end{tikzpicture}
}%
\mapsto%
\mathaxisbox{
\begin{tikzpicture}[scale=0.32]
\filldraw[Tileg, line width=0.15pt] (0,-1) rectangle (1,0);
\filldraw[Tileb, line width=0.15pt] (1,-1) rectangle (2,0);
\filldraw[Tiler, line width=0.15pt] (0,-2) rectangle (1,-1);
\filldraw[Tileg, line width=0.15pt] (1,-2) rectangle (2,-1);
\filldraw[Tileg, line width=0.15pt] (0,-3) rectangle (1,-2);
\filldraw[Tileb, line width=0.15pt] (1,-3) rectangle (2,-2);
\filldraw[Tiler, line width=0.15pt] (0,-4) rectangle (1,-3);
\filldraw[Tileg, line width=0.15pt] (1,-4) rectangle (2,-3);
\draw[Frame] (0,-4) rectangle (2,0);
\end{tikzpicture}
}
\) 
 & \(
\mathaxisbox{
\begin{tikzpicture}[scale=0.32]
\filldraw[Tileg, line width=0.15pt] (0,-1) rectangle (1,0);
\filldraw[Tiley, line width=0.15pt] (0,-2) rectangle (1,-1);
\draw[Frame] (0,-2) rectangle (1,0);
\end{tikzpicture}
}%
\mapsto%
\mathaxisbox{
\begin{tikzpicture}[scale=0.32]
\filldraw[Tileg, line width=0.15pt] (0,-1) rectangle (1,0);
\filldraw[Tileb, line width=0.15pt] (1,-1) rectangle (2,0);
\filldraw[Tiler, line width=0.15pt] (0,-2) rectangle (1,-1);
\filldraw[Tileg, line width=0.15pt] (1,-2) rectangle (2,-1);
\filldraw[Tiley, line width=0.15pt] (0,-3) rectangle (1,-2);
\filldraw[Tileb, line width=0.15pt] (1,-3) rectangle (2,-2);
\filldraw[Tiler, line width=0.15pt] (0,-4) rectangle (1,-3);
\filldraw[Tiley, line width=0.15pt] (1,-4) rectangle (2,-3);
\draw[Frame] (0,-4) rectangle (2,0);
\end{tikzpicture}
}
\)\\
\end{tabular}
\par\endgroup
\end{table}

\end{document}